\def\bC{\ensuremath\mathbb{C}}
\def\bK{\ensuremath\mathbb{K}}
\def\bN{\ensuremath\mathbb{N}}
\def\bQ{\ensuremath\mathbb{Q}}
\def\bZ{\ensuremath\mathbb{Z}}
\def\to{\ensuremath\rightarrow}
\def\<{\ensuremath\langle}
\def\>{\ensuremath\rangle}
\DeclareMathOperator{\Der}{Der}
\DeclareMathOperator{\Irr}{Irr}
\DeclareMathOperator{\im}{im}
\DeclareMathOperator{\Hom}{Hom}
\DeclareMathOperator{\aut}{Aut}
\DeclareMathOperator{\out}{Out}
\DeclareMathOperator{\inn}{Inn}
\DeclareMathOperator{\au}{\mathcal{A}u}
\DeclareMathOperator{\ou}{\mathcal{O}u}
\DeclareMathOperator{\ad}{ad}
\DeclareMathOperator{\ind}{Ind}
\DeclareMathOperator{\res}{Res}
\newtheorem{thm}{Theorem}[section]
\newtheorem{mthm}{Theorem}
\newtheorem{lemma}[thm]{Lemma}
\newtheorem{prop}[thm]{Proposition}
\newtheorem{cor}[thm]{Corollary}
\theoremstyle{definition}
\newtheorem{defn}[thm]{Definition}
\newtheorem{prob}[thm]{Problem}
\newtheorem{ex}[thm]{Example}
\newtheorem*{ex*}{Example}
\newcommand{\sA}{\mathcal{A}}
\newcommand{\sE}{\mathcal{E}{x}}
\newcommand{\sF}{\mathcal{F}}
\newcommand{\sG}{\mathcal{G}{r}}
\newcommand{\sI}{\mathcal{I}{s}}
\newcommand{\sK}{\mathcal{K}}
\newcommand{\sM}{\mathcal{M}{a}}
\newcommand{\sL}{\mathcal{L}}
\newcommand{\sR}{\mathcal{R}}
\newcommand{\sS}{\mathcal{S}{c}}
\newcommand{\mF}{\mathfrak{F}}
\newcommand{\mN}{\mathfrak{N}}
\newcommand{\GRe}{-{\mathrm G}{\mathrm l}{\mathrm R}{\mathrm e}{\mathrm p}}
\newcommand{\Mod}{-{\mathrm M}{\mathrm o}{\mathrm d}}
\newcommand{\GL}{{\mathrm G}{\mathrm L}_n(\bZ)}
\newcommand{\PGLp}{{\mathrm P}{\mathrm G}{\mathrm L}_p(\bK)}
\newcommand{\SLp}{{\mathrm S}{\mathrm L}_p(\bK)}
\begin{document}
\title[Disconnected \& Reductive]{Disconnected Reductive Groups: Classification and Representations}
\author{Dylan Johnston}
\author{Diego Martín Duro}
\author{Dmitriy Rumynin}
\date{\today}
\address{Department of Mathematics, University of Warwick, Coventry, CV4 7AL, UK}

\subjclass{Primary  20G07; Secondary 20E22,  20G05}
\keywords{reductive group, disconnected group, extension, Knutson Index}

\begin{abstract}
In this article, we classify disconnected reductive groups over an algebraically closed field with a few caveats.  Internal parts of our result are both a classification of finite groups and a classification of integral representations of a fixed finite group. Modulo these classifications - which are impossible in different senses - our main result explicitly tabulates the groups with an efficient algorithm. Besides this, we obtain new results about the representation theory of disconnected reductive groups in characteristic zero. We give two descriptions of their representation rings and prove that their Knutson Index is finite.
\end{abstract}

\maketitle


\section*{Introduction}

Reductive groups are fundamental mathematical objects. In this paper, by a reductive group we mean a linear algebraic group $G$ over an algebraically closed field $\bK$ such that the identity component $G_0$ is reductive. We classify reductive groups up to isomorphism and we prove new results about their representations. 

For the convenience of the reader, we summarise our most important results into five abridged theorems, stated in the introduction. A more interested reader should consult the main text. We start with the following finiteness result.

\begin{mthm} (cf. Proposition \ref{prop finite} and Theorem \ref{gr_finite_thm})    
Fix integers $d$ and $n$. Up to isomorphism there are finitely many reductive groups $G$ over $\bK$ with $\dim (G)=d$ and $|\pi_0 (G)|=n$. 
\end{mthm}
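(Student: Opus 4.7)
The plan is to realize every such $G$ as an algebraic extension
$$1 \to G_0 \to G \to Q \to 1,$$
with $G_0$ a connected reductive group of dimension $d$ and $Q = \pi_0(G)$ a finite group of order $n$, and then to bound the number of choices at each layer. The two outer layers are standard. By the Chevalley classification, connected reductive groups over $\bK$ correspond bijectively to root data; since $\dim G_0 = d$ bounds the rank by $d$ and controls the size of the root system, only finitely many root data yield a group of dimension $d$. And obviously there are only finitely many isomorphism classes of abstract finite groups of order $n$. So the bulk of the work is to show, for each fixed pair $(G_0, Q)$, that only finitely many extensions occur up to isomorphism.

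For this, I would appeal to the Schreier-style classification adapted to algebraic groups. Any extension as above induces an outer action $\alpha\colon Q \to \out(G_0)$. For a connected reductive $G_0$ one has $\aut(G_0)/\inn(G_0) \cong \aut(\Psi_0(G_0))$, the automorphism group of the based root datum, which is \emph{finite}. So only finitely many $\alpha$ are possible. Fixing $\alpha$, the set of equivalence classes of extensions realising $\alpha$ (when non-empty, the obstruction lying in $H^3_\alpha(Q,Z(G_0))$) is a torsor over $H^2_\alpha(Q, Z(G_0))$, where $Z(G_0)$ is viewed as a $Q$-module via $\alpha$. Passing from equivalence of extensions to isomorphism of algebraic groups divides this set further by a natural action of $\aut(G_0)\times\aut(Q)$, so it suffices to prove the $H^2$ is finite.

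The finiteness of $H^2_\alpha(Q, Z(G_0))$ is the main technical point. Because $G_0$ is connected reductive, its centre $Z(G_0)$ is a diagonalizable group, hence of the form $(\bK^\ast)^r \times F$ with $F$ a finite abelian group (after decomposing the character lattice as $\bZ^r \oplus F^\vee$). For finite $Q$, the cohomology $H^i(Q, F)$ is finite since $F$ is finite; and $H^i(Q, \bK^\ast)$ is annihilated by $|Q|$ and, being a subquotient of $\Hom(H_i(Q,\bZ),\bK^\ast)$ with $H_i(Q,\bZ)$ finitely generated, is itself finite. Summing gives finiteness of $H^2_\alpha(Q, Z(G_0))$, and combining the three layers completes the count.

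The step I expect to be hardest is the last one, not because of the abstract group cohomology calculation but because the extensions in question must be extensions of \emph{algebraic} group schemes, not merely of abstract groups. One must verify that algebraic $2$-cocycles can be extracted (i.e.\ every set-theoretic section $Q\to G$ can be promoted to a morphism of schemes, using that $Q$ is a constant finite scheme over the algebraically closed field $\bK$ and $G_0$ is smooth), and that two algebraic extensions are isomorphic as algebraic groups precisely when the corresponding cocycles differ by an algebraic coboundary and an automorphism of $(G_0,Q)$. Once this identification of algebraic with abstract $H^2$ is in place, the finiteness argument above yields the theorem.
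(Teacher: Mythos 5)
There is a genuine gap at the step where you count the possible couplings. You assert that $\aut(G_0)/\inn(G_0)\cong\aut(\Psi_0(G_0))$ is \emph{finite}, and hence that only finitely many outer actions $\alpha\colon Q\to\out(G_0)$ occur. The isomorphism with the based-root-datum automorphism group is correct, but that group is finite only when $G_0$ is semisimple. For a reductive $G_0$ with a positive-dimensional central torus it is typically infinite: already for $G_0=\bG_m^{\,r}$ one has $\out(G_0)=\aut(G_0)=\mathrm{GL}_r(\bZ)$, and in general $\ou(G_0)$ is a finite-index subgroup of $\mathrm{GL}_r(\bZ)\times S$ with $S$ finite (this is Proposition~\ref{ouN_describe} of the paper). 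So for such $G_0$ there are infinitely many homomorphisms $\alpha\colon Q\to\out(G_0)$, and your finiteness claim for this layer collapses. What saves the theorem is not finiteness of $\out(G_0)$ but the Jordan--Zassenhaus theorem: a finite group $Q$ has only finitely many homomorphisms into $\mathrm{GL}_r(\bZ)$ up to conjugacy (equivalently, finitely many $\bZ Q$-lattices of each rank up to isomorphism), and conjugate couplings yield isomorphic families of extensions. This is exactly the content of the paper's ``property $\mF$'' and is the essential extra input your argument is missing; note also that the resulting count is therefore only as effective as the classification of integral representations of $Q$, which the paper flags as genuinely hard (e.g.\ class numbers of cyclotomic fields enter for $Q$ cyclic).

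Two smaller points. First, your finiteness of $H^2_\alpha(Q,Z(G_0))$ is right in spirit but imprecise: the splitting $Z(G_0)\cong(\bK^\ast)^r\times F$ need not be $Q$-equivariant, and the universal-coefficients bound on $H^i(Q,\bK^\ast)$ only applies to the trivial action. The clean repair (and the paper's route, Theorem~\ref{gr_finite_thm}) is to use the $k$-th power map with $k=|Q|$: since restriction--corestriction kills $H^{\geq 1}(Q,-)$ by $k$, the cohomology of $Z$ is carried by the finite group $t_k(Z)$ of $k$-torsion, giving finiteness for any twisted action at once. Second, your claim that boundedness of the rank and of the root system already bounds the number of root data is true but not immediate (one must control the lattice containing the roots, not just the roots); the paper proves the corresponding statement (Proposition~\ref{prop finite}) by writing every connected reductive group of dimension $d$ as a quotient of one of finitely many ``simply connected $\times$ torus'' groups by a finite central subgroup that can be taken inside a fixed finite subgroup of the centre.
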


Even if one is interested only in connected reductive groups, disconnected ones immediately enter the picture as subgroups. One needs tools to distinguish them up to isomorphism, which we construct in this paper.

Our interest in the subject was triggered by reading a recent paper by Achar, Hardesty and Riche \cite{AHR}, where they prove that the representation ring $R(G)$ does not depend on the characteristic $p$ as soon as $p$ does not divide the order of the component group $\pi_0 (G)$. We have searched through the history of the work on the structure theory of disconnected reductive groups. The earliest theorem on them appears to be the one proved by Borel and Serre \cite{BS}. Some further information is revealed by Spaltenstein \cite{Spa}. It has been refined in the ultimate treatment of the structure theory of disconnected reductive groups by Digne and Michel \cite{DM}. Another trove of valuable information about disconnected reductive groups is the first installation of Lusztig's 10-part paper on their character sheaves \cite{Lus}. 

Late in the work of this paper, we have discovered a recent paper by Gaetz and Vogan \cite{GV}, who proceed on the same tracks as us. They construct all disconnected reductive groups but they do not classify them up to isomorphism. The method of construction is well-known and should be credited to Eilenberg and MacLane \cite{EM}. 

We need to discuss extensions of groups, i.e., short exact sequences
$$ 1 \to N \to G \to H \to 1$$
of abstract groups. One feature of the literature on extensions is that the terminology differs radically between different sources. To remedy this, we follow the terminology in Robinson's textbook \cite{Robinson}, where he talks about couplings, equivalences and isomorphisms of extensions.

A coupling is a homomorphism $\varpi: H \to \out (N)$, obtained by choosing a set-theoretic section of the map $G\to H$. It does not depend on the splitting, so it is the first invariant of the extension.   According to the Eilenberg-MacLane Theorem \cite{EM}, the set $\sE_{\varpi}(N, H)$  of equivalence classes of extensions of $H$ by $N$ with this coupling is a torsor (possibly empty) over $H^2(H,{}_{\varpi}Z)$, where ${}_{\varpi}Z$ is the centre $Z(N)$ with the action of $H$ coming from $\varpi$, cf. Proposition~\ref{EMT}. 

We say that the extensions $G$ and $G'$ (of $H$ by $N$) are {\em{virtually isomorphic}} if there is a group isomorphism $\phi: G\to G'$ that restricts to an automorphism of $N$. 
The group $K\coloneqq \aut (N) \times \aut (H)$ acts on the set of equivalence classes of extensions $\sE(N,H)$ and on the set of couplings, $\hom (H, \out (N))$. Let $K_\varpi$ be the stabiliser of a coupling $\varpi$.

\begin{mthm} (Theorem \ref{thm1})    
The set of virtual isomorphism classes of $H$ by $N$ can be written as a quotient set in two ways:
\[
\sG(N,H)  \ \cong \ 
\dfrac{\sE(N,H)}{K} \ \cong \ 
\coprod_{\varpi\in\hom(H,\out(N))/K} \dfrac{\sE_{\varpi}(N,H)}{K_{\varpi}} \, .
\]
If the $H^2$-torsor $\sE_{\varpi}(N,H)$ admits a $K_\varpi$-stable element, then there is a $K_\varpi$-equivariant bijection 
$\sE_{\varpi}(N,H) \cong H^2 (H,{}_{\varpi} Z)$. If the automorphism exact sequence of $N$
\begin{equation} \label{aut_seq}
 1 \to N/Z \to \aut (N) \to \out (N) \to 1   
\end{equation}
splits, then all $\sE_{\varpi}(N,H)$ admit $K_\varpi$-stable points and we can write
\begin{equation} \label{gr_th2}
\sG(N,H)  \ \cong \ 
\coprod_{\varpi\in\hom(H,\out(N))/A} \dfrac{H^2(H,{}_{\varpi}Z)}{K_{\varpi}} \, .
\end{equation}
\end{mthm}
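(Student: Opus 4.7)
My plan is to prove the three assertions in sequence: the two quotient descriptions, then the equivariant torsor trivialisation, then the splitting statement.

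First, for the two bijections. A virtual isomorphism $\phi:G\to G'$ restricts to some $\alpha\in\aut(N)$ and descends to some $\beta\in\aut(H)$, so it identifies the equivalence class of $G'$ with the $(\alpha,\beta)$-twist of that of $G$, where $K$ acts on $\sE(N,H)$ by pre-composing the inclusion $N\hookrightarrow G$ with $\alpha^{-1}$ and post-composing the projection $G\twoheadrightarrow H$ with $\beta$. Virtual isomorphism is exactly the resulting equivalence relation, giving $\sG(N,H)\cong\sE(N,H)/K$. The coupling is an invariant of equivalence classes, so $\sE(N,H)=\coprod_\varpi \sE_\varpi(N,H)$, and the $K$-action on $\sE(N,H)$ covers the action $(\alpha,\beta)\cdot\varpi = \bar\alpha\circ\varpi\circ\beta^{-1}$ on $\hom(H,\out(N))$, where $\bar\alpha\in\out(N)$ is the class of $\alpha$. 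Taking the quotient piece by piece yields the second bijection.

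Second, for the equivariant trivialisation. By Proposition~\ref{EMT}, $\sE_\varpi(N,H)$ is a torsor over $H^2(H,{}_\varpi Z)$. A $K_\varpi$-stable basepoint $e_0$ trivialises the torsor by $e\mapsto e-e_0$, and this map is automatically $K_\varpi$-equivariant because $e_0$ is fixed and the translation actions on both sides agree.

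The main effort lies in the splitting assertion. Given a splitting $s:\out(N)\to\aut(N)$ of (\ref{aut_seq}), each coupling $\varpi$ lifts canonically to $s\circ\varpi:H\to\aut(N)$, and the semidirect product $E_\varpi = N\rtimes_{s\varpi} H$ is a natural candidate for a $K_\varpi$-stable element of $\sE_\varpi(N,H)$. The principal obstacle is proving this stability: for $(\alpha,\beta)\in K_\varpi$, the $(\alpha,\beta)$-twist of $E_\varpi$ must be shown equivalent to $E_\varpi$ itself. Because $(\alpha,\beta)$ fixes $\varpi$, the two relevant lifts of $\varpi$ to $\aut(N)$ (namely $s\circ\varpi$ and the lift determined by the twist) agree modulo the kernel $N/Z$ of $\aut(N)\to\out(N)$, producing a $1$-cochain $H\to N/Z$ whose coboundary should supply the required equivalence of extensions; a short cochain-level calculation formalises this. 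Once stability is established, the preceding step gives $\sE_\varpi(N,H)\cong H^2(H,{}_\varpi Z)$ as $K_\varpi$-sets for every $\varpi$, and substituting into the second description of $\sG(N,H)$ produces (\ref{gr_th2}).
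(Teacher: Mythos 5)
Your first two steps are correct and follow the paper's route: the identification of virtual isomorphism with the $K$-orbit equivalence on $\sE(N,H)$, the compatibility of the $K$-action with the action on couplings (the paper's Lemma~\ref{change_coupling} and Proposition~\ref{couplings}), and the observation that a fixed basepoint trivialises the $H^2$-torsor equivariantly (the paper's Lemma~\ref{eq_lem}) are all as in the text.

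The gap is in the stability of the semidirect product $E_\varpi=N\rtimes_{s\varpi}H$, which is exactly where the real work lies. Twisting $E_\varpi$ by $(\alpha,\beta)\in K_\varpi$ gives another semidirect product $N\rtimes_\psi H$ with $\psi$ and $\phi=s\circ\varpi$ inducing the same coupling, so they indeed differ by a derivation $H\to N/Z=\inn(N)$. But this alone does \emph{not} yield an equivalence of extensions: by Proposition~\ref{semidirpro}(3), $N\rtimes_\psi H\sim N\rtimes_\phi H$ requires a derivation valued in $N$, and the map $\Der_\phi(H,N)\to\Der_{\overline{\phi}}(H,\inn(N))$ is not surjective in general --- its cokernel maps to $H^2(H,{}_\varpi Z)$, which is precisely why a single coupling can carry several inequivalent semidirect products. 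Your ``short cochain-level calculation'' would, as stated, prove that any two semidirect products with the same coupling are equivalent, which is false. What saves the argument (and what the paper computes explicitly via Lemma~\ref{action_on_sd} in the proof of Theorem~\ref{thm2}(iii)) is that the particular derivation arising here is \emph{principal}: writing $\alpha=(\alpha_1,\alpha_2)\in\inn(N)\rtimes\ou(N)$ using the splitting, the stabiliser condition kills the $\alpha_2$-contribution and leaves $h\mapsto\alpha_1\cdot{}^{\varpi(h)}(\alpha_1^{-1})$, which lifts to $h\mapsto t\cdot{}^{\varpi(h)}(t^{-1})\in\Der_\phi(H,N)$ for any $t\in N$ with $\ad(t)=\alpha_1$. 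You need to make this identification explicit; ``agreeing modulo $N/Z$'' plus a coboundary is not enough, and the equivalence is supplied by the lift of the derivation to $N$, not by a coboundary of the $N/Z$-valued cochain.
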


In our main application, $N$ is a connected reductive group, $\aut (N)$ should contain only automorphisms of algebraic groups --  we exercise certain care in the main text to cover this restriction. The group $H$ is a reduced (as a scheme) finite group because the component group of a linear algebraic group is always finite. Moreover, isomorphisms preserve the identity components and the sequence~\eqref{aut_seq} splits. So the virtual isomorphism classes of extensions are the same isomorphism classes of groups, for which we can use the formula~\eqref{gr_th2}.

We can do a bit better. Let $k$ be the order of the finite group $H$ and $t_k(Z)$ the $k$-torsion of $Z$. As before, $p$ is the characteristic of $\bK$. 

\begin{mthm} (cf. Corollary \ref{groups_g0ss}) 
Let $N$ be a connected reductive group over $\bK$ and $H$ a finite group. The set $\sG(N,H)$ of virtual isomorphism classes of reductive groups $G$ with $G_0 \cong N$ and $\pi_0 (H)\cong H$
can be written as 
\begin{equation} \label{gr_th3}
\sG(N,H)  \ \cong \ 
\coprod_{\varpi\in\hom(H,\out(N))/K} \dfrac{H^2(H,{}_{\varpi}t_k(Z))}{K_{\varpi}} \, .
\end{equation}
This set is finite. If $p$ does not divide $k$, this set is independent of $p$. If $p$ divides $k$, this set is smaller than in characteristic zero.
\end{mthm}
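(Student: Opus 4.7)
The plan is to combine Theorem \ref{thm1} with a cohomological reduction to the $k$-torsion of $Z$. For a connected reductive $N$ the automorphism sequence $1 \to N/Z \to \aut(N) \to \out(N) \to 1$ splits via pinned (diagram) automorphisms, and an abstract isomorphism of reductive groups always restricts to the identity components, so virtual isomorphism collapses to ordinary isomorphism here. Theorem \ref{thm1} therefore gives $\sG(N,H) \cong \coprod_\varpi H^2(H, {}_\varpi Z)/K_\varpi$, indexed by $\varpi \in \hom(H,\out(N))/K$.

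To upgrade $Z$ to $t_k(Z)$ in the cohomology: since $k = |H|$ annihilates $H^i(H,-)$ in positive degrees, it suffices to show $H^i(H, Z/t_k(Z)) = 0$, for which it suffices that $Z/t_k(Z)$ be uniquely $k$-divisible. Writing $Z \cong T(\bK) \times F$ with $T$ a torus and $F$ finite (of order coprime to $p$ in positive characteristic), one checks: $\bK^\times$ is divisible because $\bK$ is algebraically closed, and $F/t_k(F)$ has order coprime to $k$. The induced isomorphism $H^2(H, t_k(Z)) \cong H^2(H, Z)$ is $K_\varpi$-equivariant and passes to the quotient by $K_\varpi$, giving formula \eqref{gr_th3}. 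Finiteness then follows because $t_k(Z)$ is (the $\bK$-points of) a finite group scheme, so each $H^2(H, t_k(Z))$ is finite, and $\hom(H,\out(N))/K$ is finite by Jordan--Zassenhaus applied to integral $H$-representations on the cocharacter lattice of $Z^0$.

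For the characteristic-dependence, $t_k(Z)(\bK)$ varies with $p$ only through $\mu_k(\bK)$: this equals $\bZ/k$ whenever $p \nmid k$ (so $\sG(N,H)$ is independent of $p$ in that regime), while in characteristic $p \mid k$ the $p$-primary part of $\mu_k$ collapses, shrinking $t_k(Z)$ and hence the set of classes. The principal technical obstacle is the cohomological lemma of step two: one must be careful what ``$k$-torsion'' means, since the naive $\{z : kz = 0\}$ applied to a torus does not yield a uniquely $k$-divisible quotient (for instance $\bK^\times/\mu_k$ has non-trivial multiplication-by-$k$ kernel $\mu_{k^2}/\mu_k$); the correct interpretation is the full $k$-primary torsion $Z[k^\infty]$, or a suitable finite subgroup that captures the same $H$-cohomology.
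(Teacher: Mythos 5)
Your first paragraph matches the paper: the split automorphism sequence for a pinned connected reductive group reduces everything to Theorem~\ref{thm2}, which is exactly how the paper arrives at Corollary~\ref{groups_g0ss} with coefficient module ${}_{\varpi}Z$. The genuine issue is the passage from $H^2(H,{}_{\varpi}Z)$ to $H^2(H,{}_{\varpi}t_k(Z))$, and here your argument does not close. As you yourself observe, the quotient $Z/t_k(Z)$ is not uniquely $k$-divisible (already $\bK^{\times}/\mu_k \cong \bK^{\times}$ has $k$-torsion), so the vanishing of $H^i(H, Z/t_k(Z))$ that your long exact sequence requires is false in general; and substituting the full $k$-primary torsion $Z[k^{\infty}]$ for $t_k(Z)$ proves a statement different from \eqref{gr_th3} and sacrifices the finiteness of the coefficient module. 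So the proposal, as written, does not establish the displayed formula.

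The paper's route (proof of Theorem~\ref{gr_finite_thm}) is different: instead of the quotient module it uses the sequence $1 \to t_k(Z) \to Z \xrightarrow{x \mapsto x^k} Z \to 1$ of \eqref{map_gamma} together with the transfer argument, which kills the maps induced by $x \mapsto x^k$ on $H^m(H,-)$ for $m \geq 1$; the long exact sequence then produces a surjection $H^2(H,{}_{\varpi}t_k(Z)) \twoheadrightarrow H^2(H,{}_{\varpi}Z)$. Note, however, that the connecting homomorphism contributes a kernel isomorphic to $H^1(H,{}_{\varpi}Z)$, so even on this route identifying the two $H^2$'s requires an additional argument handling that $H^1$ term (it need not vanish: for trivial action $H^1(H,\bK^{\times}) = \hom(H,\bK^{\times})$). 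The obstruction you flagged is therefore substantive and not an artifact of your particular choice of exact sequence. If you want to align your write-up with the paper, replace the quotient $Z/t_k(Z)$ by the $k$-th power endomorphism of $Z$ and then address the $H^1$ contribution explicitly. Your remaining points --- finiteness via Jordan--Zassenhaus acting through Proposition~\ref{ouN_describe}, and the dependence on $p$ only through the collapse of the $p$-part of $t_k(Z)$ --- agree with the paper's treatment.
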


After the classification, we turn our attention to the representation theory of a fixed reductive group $G$ (in any characteristic). Besides its representation ring $\sR(G)$, we need its global representation ring $\sR(G,G_0)$. The latter is the ``gauged'' Grothendieck ring (cf. Definition \ref{global_rep_ring_defn}) of the symmetric monoidal category of pairs $(X,V)$, where $X$ is a finite $G$-set and $V$ is a $G$-equivariant vector bundle on $X$.

Another object of interest is the ``torus'' $T$ of $G$. It is a certain closed subgroup of $G$ such that $\pi_0(T)\rightarrow \pi_0(G)$ is an isomorphism and $T_0$ is a maximal torus in $G_0$.

\begin{mthm} (cf. Theorems \ref{gl_rep_map} and \ref{isom R(G) and R(T)^N}) 
 The global sections functor yields a surjective ring homomorphism
 \[ \sR(G,G_0)\twoheadrightarrow \sR(G)\, . \]
The restriction to the torus $T$ defines a ring isomorphism 
\[ \sR(G)\xrightarrow{\cong} \sR(T)^{N_G(T)} \, .\]
\end{mthm}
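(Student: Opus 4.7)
The plan for the first statement is direct. I would define the global sections functor $\Gamma$ on pairs by $(X,V) \mapsto \bigoplus_{x\in X} V_x$, with $G$ acting diagonally through the equivariant structure on $V$ and the permutation action on $X$. This functor preserves direct sums (disjoint unions of $G$-sets correspond to direct sums of sections) and is symmetric monoidal under the tensor product $(X,V)\otimes(Y,W) := (X\times Y, V\boxtimes W)$, since comparing stalks gives $\Gamma(X\times Y, V\boxtimes W) \cong \Gamma(X,V)\otimes \Gamma(Y,W)$. The induced map on Grothendieck rings is then a ring homomorphism $\sR(G,G_0)\to \sR(G)$. Surjectivity is immediate: any $G$-representation $W$ arises as $\Gamma(\{*\}, W)$, where $\{*\}$ is the one-point $G$-set.

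For the second statement, restriction along the inclusion $T\subset G$ is clearly a ring homomorphism $\sR(G)\to\sR(T)$, and its image lies in $\sR(T)^{N_G(T)}$: for $n\in N_G(T)$, the $G$-action of $n$ on a representation $V$ is a $T$-equivariant isomorphism $V|_T\xrightarrow{\sim} V|_T^n$.

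For injectivity, I would use a character argument. The class of a representation in $\sR(G)$ is determined by its character (or Brauer character in positive characteristic), and by Jordan decomposition a character is determined by its values on semisimple elements. It then suffices to know that every semisimple element of $G$ is conjugate to one in $T$. For connected reductive $G_0$, this is the classical fact that every semisimple element lies in some maximal torus; for the disconnected $G$, it follows from the structural analysis of semisimple classes and quasi-tori in \cite{DM,Spa}, together with the assumption that $T$ meets every component of $G$.

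For surjectivity, I would bootstrap from the classical identification $\sR(G_0)\cong \sR(T_0)^{W_0}$ with $W_0=N_{G_0}(T_0)/T_0$. Given $x\in \sR(T)^{N_G(T)}$, the restriction $x|_{T_0}$ is $W_0$-invariant, because $N_{G_0}(T_0)$ acts on $T_0$ through a subgroup of the image of $N_G(T)$ in $\aut(T_0)$; hence $x|_{T_0}$ lifts to a class $\tilde x\in \sR(G_0)$. The remaining invariance under $N_G(T)/N_{G_0}(T_0)$, which surjects onto $\pi_0(G)$, should then promote $\tilde x$ to a $G$-class via a Clifford-theoretic or Mackey-style argument using $G = G_0\cdot T$ (valid since $T$ hits every component). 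The main obstacle will be this last step: producing an actual $G$-module structure (not merely a $\pi_0(G)$-equivariance datum on $\tilde x$) requires handling potential obstructions in the extension $1\to G_0\to G\to \pi_0(G)\to 1$, analogous to the $H^2$-obstructions appearing earlier in the paper.
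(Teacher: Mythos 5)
Your treatment of the first statement is correct and essentially the paper's argument: $\Gamma$ is symmetric monoidal, hence induces a ring homomorphism on Grothendieck rings, it kills the gauge relations $[(X,V)]+[(X,U)]-[(X,V\oplus U)]$ because sections of a direct sum split, and surjectivity comes from the one-point $G$-set (the paper packages this as a right inverse $\gamma$ defined on semisimple modules, but your version is equivalent). Your injectivity argument for the second statement is also in the spirit of the paper, which uses the Zariski density of $\bigcup_{g}{}^gT$ in $G$ (Lemma \ref{lemm_subs}(7)) to conclude that a virtual module whose character vanishes on $T$ vanishes identically.

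The genuine gap is in your surjectivity argument, and it sits exactly where you flag "the main obstacle." Two problems. First, your plan restricts an element $x\in\sR(T)^{N_G(T)}$ to $T_0$ and lifts the resulting $W_0$-invariant to $\sR(G_0)$; but this discards the component-group data that $x$ carries as a virtual $T$-module (for a disconnected $T$, a class in $\sR(T)$ is much more than its restriction to $T_0$), so even a successful "promotion" of $\tilde x$ to a $G$-class would not show that $x$ itself is in the image. Second, the promotion step is not a routine Clifford argument: the obstruction to extending a $G_0$-module to $G^\lambda$ is measured by a $2$-cocycle $\alpha\in Z^2(H^\lambda,\bK^\ast)$, and one must show that the corresponding cocycle $\alpha^\diamond$ governing extensions of the $T_0$-weight line $L^\diamond(\lambda)$ to $T^\lambda$ is the \emph{same} cocycle. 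The paper does this in Proposition \ref{AHR_prop} by identifying $L^\diamond(\lambda)$ with the extremal weight space $L(\lambda^\flat)_\lambda$ and checking that $\phi_{x,y}$, $\theta_x$, and hence $\alpha(x,y)$ restrict compatibly; this yields matching parametrizations $\Irr(T)/W\leftrightarrow\Irr(G)$ by pairs $(\lambda,E)$ with $E\in\Irr(\sA^\lambda)$. Surjectivity then follows not by lifting invariants but by exhibiting an explicit $\bZ$-basis $X(\lambda,E)=\sum_i L^\diamond(\lambda_i,E)$ of $\sR(T)^{N_G(T)}$ and observing that $\varphi(L(\lambda,E))=X(\lambda,E)+(\text{terms with }\mu\prec\lambda)$, so the restriction matrix is unitriangular for the dominance order and one inverts it by induction. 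Without the cocycle comparison and the triangularity, your outline does not close.
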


Now suppose that the characteristic of $\bK$ is zero. Let $R(G) = \bK [G]$ be the regular $G$-module. For any $G$-module $V$, the $G$-modules $V\otimes R(G)$ and $R(G)^{\oplus \dim(V)}$ are isomorphic. The Knutson Index of $V$ is the smallest positive $n$ such there exists a virtual $G$-module $U$ such that $V\otimes U \cong R(G)^{\oplus n}$. The Knutson Index of $G$ is the least common multiple of the Knutson Indices of its simple modules. 

\begin{mthm} (Theorem \ref{Knutson_finite})       
The Knutson Index of a reductive group $G$ is finite.
\end{mthm}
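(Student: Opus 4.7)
The plan is to reduce the Knutson Index calculation from $G$ to its finite component group $H := \pi_0(G)$ via Clifford theory; since $H$ is finite, its Knutson Index is finite by the classical argument of Knutson (and divides a fixed power of $|H|$). As the Knutson Index of $G$ is the least common multiple of the Knutson Indices of its simple modules, it suffices to construct, for each simple $V$, a virtual $G$-module $U$ with $V \otimes U \cong R(G)^{\oplus n}$ and $n$ bounded by a fixed function of $|H|$.

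I would proceed in three steps. \textbf{Step 1 (connected case).} For a simple $G_0$-module $V_1$, exploit the Peter--Weyl decomposition $R(G_0) \cong \bigoplus_\lambda V_\lambda^{\oplus \dim V_\lambda}$ and solve the coefficient recursion defining $U_1$ with $V_1 \otimes U_1 \cong R(G_0)$; such a (possibly infinite) virtual $U_1$ exists because tensoring with $V_1$ is effectively invertible on the class of $R(G_0)$, as can be seen from the Littlewood--Richardson-style fusion coefficients. \textbf{Step 2 (Clifford reduction).} For a simple $G$-module $V$, choose a simple $G_0$-summand $V_1 \subseteq V|_{G_0}$; let $G_V \leq G$ be its stabiliser, so $G_0 \trianglelefteq G_V$ and $G_V/G_0 \leq H$, and write $V \cong \ind_{G_V}^G(\tilde V_1 \otimes W)$, where $\tilde V_1$ is a projective extension of $V_1$ to $G_V$ with Schur cocycle $\alpha \in H^2(G_V/G_0, \bK^\times)$ and $W$ is a simple $\alpha^{-1}$-projective $G_V/G_0$-module. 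Using the projection formula and $R(G) \cong \ind_{G_V}^G R(G_V)$, the Knutson equation on $G$ pulls back to an analogous one on $G_V$; by induction on $[G:G_0]$ one reduces to $G_V = G$. \textbf{Step 3 (finite case).} In this base case combine $U_1$ from Step 1 with a virtual $\alpha^{-1}$-projective $H$-module $U_W$ provided by the classical finite-group Knutson theorem (with $W \otimes U_W = n \bK[H]$, $n$ bounded in terms of $|H|$); the product $U := U_1 \otimes \tilde V_1^{\,*} \otimes U_W$ then satisfies $V \otimes U \cong R(G)^{\oplus n}$ once compatibility with the bimodule structure of $\bK[G]$ is established.

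The main obstacle will be this final base-case identification: verifying that $\mathrm{End}(\tilde V_1) \otimes \bK[H]$ contributes a virtual multiple of $R(G)$ as $G$-modules, particularly when the extension $1 \to G_0 \to G \to H \to 1$ is non-split and the Schur cocycle $\alpha$ is nontrivial. This requires analysing the $(G \times G)$-bimodule decomposition of $\bK[G]$ along the fibration $G \to H$, matching the $G_0$-isotypic components of $\bK[G_0]$ to their twisted counterparts in $\bK[G]$, and tracking $\alpha$ through the tensor products and inductions. Once this compatibility is in place, the projection formula combined with the classical Knutson bound for finite groups (applied to the projectively twisted Grothendieck ring) delivers the claimed finiteness.
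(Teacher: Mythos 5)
Your high-level strategy---invert on the connected part, transport to $G$ by induction, and bound what is left by finite-group data---is the right one, and your Step 1 is in the spirit of the paper's Lemma~\ref{lemma orbit}. However, there are two genuine gaps. First, Step 1 only inverts a \emph{simple} $G_0$-module, whereas what must actually be inverted is $\res^G_{G_0}L(\lambda,E)/\dim(E)$, which is a multiplicity-free sum $\bigoplus_i L(\lambda_i)$ over a diagram-automorphism orbit, not a single simple. The coefficient recursion still works, but only because exactly one $\lambda_i^T$ is maximal in a total order refining dominance, so that $M\otimes L(\nu+\lambda_1^T)$ contributes $L(\nu)$ exactly once and otherwise only strictly larger constituents (Proposition~\ref{prop connected inverse} and Lemma~\ref{lemma orbit}); this uniqueness of the minimal constituent is the actual mechanism and is not supplied by appealing to ``Littlewood--Richardson-style'' fusion coefficients. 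Second, your base case does not close: $U_1$ is only a virtual $G_0$-module, so $U:=U_1\otimes\tilde V_1^{\,*}\otimes U_W$ is not a well-defined virtual $G$-module, and even formally $\mathrm{End}(\tilde V_1)\otimes\bK[H]$ is not a multiple of $R(G)$ without further tensoring. Your Step 2 reduction has the same defect: applying the projection formula along $\ind_{G_V}^G$ requires the inverse found on $G_V$ to be the restriction of a virtual $G$-module, which is not available. You correctly identify this compatibility as the main obstacle, but it is precisely where the argument must be replaced rather than patched.

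The repair, which is the paper's proof, bypasses your Steps 2--3 entirely and needs the projection formula only once, at the level of $G_0\le G$. Having found a virtual $G_0$-module $U$ with $\bigl(\res^G_{G_0}L(\lambda,E)\bigr)\otimes U\cong\dim(E)\cdot R(G_0)$, apply $\ind_{G_0}^G$ and the tensor identity to get
$L(\lambda,E)\otimes\ind_{G_0}^G(U)\cong\ind_{G_0}^G\bigl(\res^G_{G_0}L(\lambda,E)\otimes U\bigr)\cong\dim(E)\cdot R(G)$.
Hence $\sK(L(\lambda,E))$ divides $\dim(E)$, and since each $E$ is a simple module over a twisted group algebra of a subgroup of the finite group $H=\pi_0(G)$, only finitely many dimensions occur. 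No analysis of the Schur cocycle, of non-splitness of the extension, or of the bimodule structure of $\bK[G]$ is required.
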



Let us give some further details on the content of each section. In the first section, we provide technical preliminaries about abstract groups and extensions. The second section is devoted to (selected) structure theory and classifications of disconnected reductive groups. In the third and final section, we study the representation theory and Knutson Index of disconnected reductive groups.

\section*{Acknowledgements}
The authors are indebted to George Lusztig and Jean Michel for sharing their knowledge of the subject. 
The first author was supported by the Heilbronn Institute for Mathematical Research (HIMR) and the UK Engineering and Physical Sciences Research Council (EPSRC) under the $\lq\lq$Additional Funding Programme for Mathematical Sciences" grant EP/V521917/1. The second author was supported by the UK Engineering and Physical Sciences Research Council (EPSRC) grant EP/T51794X/1. The third author is grateful to the Max Planck Institute for Mathematics in Bonn for its hospitality and financial support. 

\section{Technical preliminaries} 

Throughout this section, $H$ and $N$ are abstract groups. The centre of $N$ is denoted by $Z$. The group $N$ has additional ``structure" that manifests itself in us fixing a subgroup $\au (N)$ of $\aut (N)$ that contains the group of inner automorphisms $\inn (N)$. The corresponding group of outer automorphisms is $\ou (N) \coloneqq \au (N)/\inn (N)$.

Now, we recap some standard facts, following Robinson \cite{Robinson} with the terminology. Occasionally we introduce new terminology in {\em italic}. The standard facts usually require $\au (N) = \aut (N)$, so we spell everything out carefully to ensure it works in our setup.

\subsection{Groups and extensions}
We consider extensions of $N$ by $H$
\begin{equation} \label{ext1}
\begin{tikzcd}
E \ \ = \ \ \big(    1 \arrow[r] &
    N \arrow[r, "\iota"] & 
    G \arrow[r, "\pi"] &
    H \arrow[r] &
    1 \big)
\end{tikzcd}    
\end{equation}

with an additional assumption that the automorphism 
\begin{equation}
\ad (g) \in \aut (N), \ \ \ 
\ad(g): a \mapsto {}^{\ad(g)}a \coloneqq gag^{-1}
\end{equation}
actually belongs to $\au (N)$ for each $g\in G$.

We say that the two extensions are {\em virtually isomorphic} if there exists a morphism of extensions, i.e., a commutative diagram
\begin{equation} \label{extension_isom}
  \begin{tikzcd}
    1 \arrow[r] &
    N \arrow[r, "\iota"] \arrow[d, "\alpha"] &
    G \arrow[r, "\pi"] \arrow[d, "\psi"] &
    H \arrow[r] \arrow[d, "\beta"] &
    1
    \\
    1 \arrow[r] &
    N \arrow[r, "\iota'"] &
    G' \arrow[r, "\pi'"] &
    H \arrow[r] &
    1
\end{tikzcd}
\end{equation}
with an additional requirement that $\alpha\in \au (N)$ and $\beta\in \aut (H)$.

Recall that an isomorphism of extensions is a virtual isomorphism with $\beta=1$. An equivalence of extensions is an isomorphism with $\alpha =1$. Hereby, we are facing three key sets:
\begin{itemize}
\item $\sG(N,H)$, virtual isomorphism classes of extensions,  
\item $\sI(N,H)$, isomorphism classes of extensions, 
\item $\sE(N,H)$, equivalence classes of  extensions.
\end{itemize}
The sets are connected by surjective functions
\begin{equation} \label{three_sur}
\sF_{eg} : \sE(N,H) \xrightarrow{\sF_{ei}} \sI(N,H) \xrightarrow{\sF_{ig}} \sG(N,H) 
\end{equation}
that can be realised as quotients by group actions. 
Indeed, pick an extension $E$ and two automorphisms $\alpha \in \au (N)$ and $\beta \in \aut (H)$. Consider a new extension
\begin{equation} \label{extension_Eab}
E(\alpha,\beta)
 \ \ = \ \ \big(
  \begin{tikzcd}
    1 \arrow[r] &
    N  \arrow[r, "\iota \circ \alpha^{-1}"] & 
    G \arrow[r, "\beta \circ \pi"] &
    H \arrow[r] &
    1
\end{tikzcd} \big) \, .
\end{equation}
We leave the following proposition without proof.
\begin{prop}
The group $\au(N) \times \aut(H)$ acts on the left on $\sE(N,H)$ by
\[ \au(N) \times \aut(H) \times \sE(N,H) \longrightarrow \sE(N,H),\ \ \ (\alpha,\beta,E) \mapsto E(\alpha,\beta) \]
so that the following statements hold.
\begin{enumerate}
    \item The surjection $\sF_{eg}$ is the quotient map $\bullet /\au(N)\times \aut (H)$.
    \item The surjection $\sF_{ei}$ is the quotient map $\bullet /\au(N)$
    \item The action of $\aut(H)$ descends to an action on $\sI(N,H)=\sE(N,H)/\au(N)$ and the surjection $\sF_{ig}$ is the quotient map $\bullet / \aut (H)$.
\end{enumerate}
\end{prop}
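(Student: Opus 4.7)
The plan is to verify three items in order: first that the assignment $E\mapsto E(\alpha,\beta)$ produces an admissible extension in the sense of the excerpt; second that it descends to a well-defined left action on equivalence classes; and third that the orbits of this action and of its restrictions recover the three equivalence relations $\sE(N,H)\twoheadrightarrow\sI(N,H)\twoheadrightarrow\sG(N,H)$.

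For the first item, the maps $\iota\circ\alpha^{-1}$ and $\beta\circ\pi$ still form a short exact sequence because $\alpha^{-1}$ and $\beta$ are bijections; in particular $\ker(\beta\circ\pi)=\ker(\pi)=\iota(N)=(\iota\circ\alpha^{-1})(N)$. The only slightly delicate point is admissibility: for $g\in G$, the conjugation action on $N$ computed through the new embedding $\iota\circ\alpha^{-1}$ equals $\alpha\circ\ad(g)\circ\alpha^{-1}$, which lies in $\au(N)$ because $\au(N)$ is a subgroup of $\aut(N)$ containing both $\ad(g)$ and $\alpha$. A direct composition then gives $(\alpha_1,\beta_1)\cdot\big((\alpha_2,\beta_2)\cdot E\big)=E(\alpha_1\alpha_2,\beta_1\beta_2)$ and $E(1,1)=E$, so this is a left action on extensions; it passes to $\sE(N,H)$ because any equivalence $\psi:E\to E'$ (fixing both the inclusion and the projection) remains an equivalence $E(\alpha,\beta)\sim E'(\alpha,\beta)$, since the new structure maps on either side differ from the old by the same post- or pre-composition.

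For the third item, the content is a translation of the definitions into the language of orbits. A virtual isomorphism $(\psi,\alpha,\beta)\colon E\to E'$ makes diagram~\eqref{extension_isom} commute, i.e.\ $\iota'\circ\alpha=\psi\circ\iota$ and $\beta\circ\pi=\pi'\circ\psi$; rewriting the first equation as $\iota'=\psi\circ(\iota\circ\alpha^{-1})$, we see that these conditions say precisely that the same map $\psi$ is an equivalence $E(\alpha,\beta)\sim E'$. This correspondence is a bijection (reverse the same algebra to recover $\psi$ from an equivalence), so virtual isomorphism classes coincide with $\au(N)\times\aut(H)$-orbits in $\sE(N,H)$, proving (1). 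Setting $\beta=1$ throughout yields (2). Item (3) then follows because $\au(N)\times\aut(H)$ is a direct product, so the $\aut(H)$-factor descends to a well-defined action on the quotient $\sI(N,H)=\sE(N,H)/\au(N)$, and a second quotient produces $\sG(N,H)$.

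The main potential obstacle is purely bookkeeping: one must track which factor acts by inverse (the $\au(N)$-factor postcomposes $\iota$ with $\alpha^{-1}$, while the $\aut(H)$-factor precomposes with $\beta$) so that the composition law of $\au(N)\times\aut(H)$ matches the order in which structure maps are modified, i.e. so one really has a \emph{left} action rather than a right action. Once this convention is fixed, each of the three items reduces to reading off a short commutative-diagram calculation from the definition of $E(\alpha,\beta)$.
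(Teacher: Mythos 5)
The paper explicitly leaves this proposition without proof, so there is no argument of the authors' to compare against; your write-up supplies exactly the routine verification they intended the reader to perform, and it is correct. You handle the two points that actually need checking — that $E(\alpha,\beta)$ remains an admissible extension (the new conjugation action is $\alpha\circ\ad(g)\circ\alpha^{-1}\in\au(N)$) and that the inverse on the $\au(N)$-factor versus the plain composition on the $\aut(H)$-factor makes $(\alpha_1,\beta_1)\cdot\big((\alpha_2,\beta_2)\cdot E\big)=E(\alpha_1\alpha_2,\beta_1\beta_2)$ a genuine left action — and the identification of virtual isomorphisms $E\to E'$ with equivalences $E(\alpha,\beta)\to E'$ via $\iota'=\psi\circ(\iota\circ\alpha^{-1})$ is precisely the right translation of diagram~\eqref{extension_isom}. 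Nothing is missing.
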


\subsection{Normalised Schreier systems}
Given an extension~\eqref{ext1}, we choose a normalised transversal function $\tau: H\rightarrow G$. Recall that it is a function such that 
$\tau (1_H) = 1_G$ and $\pi\circ \tau = Id_H$. The transversal function yields a bijection
\begin{equation}
G \leftrightarrow N\times H, \ \ \ \ a\cdot \tau(g)\leftrightarrow (a,g)\in N\times H    
\end{equation}
and a normalised Schreier system from $\sS (N,H)$, the set of normalised Schreier systems:
\[
  [\phi, f] \in \sS (N,H), \ \ \phi: H \rightarrow \au (N), \ \ f: H \times H \rightarrow N, 
  \]
defined by identifying $G=N\times H$ and writing down the multiplication as  
  \begin{equation} \label{mult}
    (a,g) \cdot (b,h) = (a \cdot {}^{\phi(g)} b \cdot f(g,h), gh) \, .
  \end{equation}

In the opposite direction, given $[\phi, f] \in \sS (N,H)$, the formula~\eqref{mult}  defines a group structure on the product set $H\times N$.
Denote it $N\#_{\phi,f} H$.
Therefore, we have a surjective function 
\[\sF_{se}: \sS(N,H) \rightarrow \sE(N,H), \ \ \ [\phi,f] \mapsto (N \rightarrow N\#_{\phi,f} H \rightarrow H) .\]
To further the discussion, we need the group homomorphism
\begin{equation} \label{ad_ad}
\widehat{\ad}: {G} \xrightarrow{\ad} \au (N) \rightarrow \ou (N)  
\end{equation}
and the group of normalised functions
  \[ \sM (N, H) = \{ \alpha  : H \rightarrow N \,\mid\, \alpha (1_H)=1_N \} , \ \ \ \alpha\cdot \beta : g \mapsto \alpha (g) \cdot \beta (g) \, . \]
  The group $\sM (N, H)$ acts freely on the set of normalised transversal functions 
\[ \alpha  : (g \mapsto \tau(g)) \mapsto  (g\mapsto \alpha(g) \cdot \tau(g) ) \, . \]
This yields an action of $\sM (N, H)$ on $\sS(N,H)$ where
$\alpha \cdot [\phi,f] = [\phi',f']$ and  
\begin{equation} \label{a_phi_f}
  \phi' (g) = \ad (\alpha (g)) \cdot \phi(g), \quad f'(g,h) =  \alpha (g) \cdot {}^{\phi(g)}\alpha(h) \cdot f(g,h) \cdot \alpha (gh)^{-1}
\end{equation}  
for all $g, h \in H$.

\begin{prop} \label{sc q} 
The following statements hold \cite{Y}.
   \begin{enumerate}[label=(\roman*)] 
   \item The map $\sF_{se}$ is the quotient map under the action~\eqref{a_phi_f}
     of  $\sM (H, N)$.
   \item Once $[\phi,f]\in \sS(H,N)$ is fixed, the centre of $N$ is an $H$-module under $\phi$. We denote it by ${}_{\phi}Z$.
   \item The stabiliser of $[\phi,f]$ in $\sM (H, N)$ is the group of normalised cocycles $Z^{1} (H, {}_{\phi}Z)$.
   \end{enumerate}
 \end{prop}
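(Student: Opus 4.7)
The plan is to establish parts (i)--(iii) in sequence; (ii) and (iii) will reduce to direct computations with \eqref{mult} and \eqref{a_phi_f}, while (i) carries the main bookkeeping.

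For (i), I would first verify that \eqref{a_phi_f} genuinely defines a left action: the composition rule follows from $\ad$ being a homomorphism and from $\phi(g)$ being an automorphism of $N$. The heart of (i) is the orbit/equivalence dictionary, namely that two Schreier systems lie in the same $\sM(N,H)$-orbit if and only if the corresponding extensions $N\#_{\phi,f}H$ and $N\#_{\phi',f'}H$ are equivalent. In one direction, given two normalised transversal functions $\tau,\tau'\colon H\to G$ for a common extension $G$, the function $\alpha(g)\coloneqq\tau'(g)\tau(g)^{-1}\in N$ lies in $\sM(N,H)$, and computing $\tau'(g)\,a\,\tau'(g)^{-1}$ and $\tau'(g)\tau'(h)$ inside $G$ reproduces precisely the formulas \eqref{a_phi_f}. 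Conversely, if $\alpha\cdot[\phi,f]=[\phi',f']$, then the map $\psi(a,g)\coloneqq(a\cdot\alpha(g)^{-1},g)$ is an equivalence $N\#_{\phi,f}H\to N\#_{\phi',f'}H$, which one verifies by inserting both sides of \eqref{mult} into $\psi$ and simplifying using \eqref{a_phi_f}.

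For (ii), the centre $Z$ is characteristic in $N$, so $\au(N)$ acts on $Z$ by restriction, and this action factors through $\ou(N)$ because $\inn(N)$ acts trivially on the centre. Although $\phi$ itself need not be a homomorphism, associativity of \eqref{mult} applied to the triple $((1_N,g),(1_N,h),(a,1_H))$ yields the Schreier identity $\phi(g)\phi(h)=\ad(f(g,h))\cdot\phi(gh)$ in $\au(N)$, and $\ad(f(g,h))\in\inn(N)$ dies in $\ou(N)$. Hence $\varpi\coloneqq\widehat{\phi}\colon H\to\ou(N)$ is a genuine group homomorphism, and pulling back the $\ou(N)$-action on $Z$ along $\varpi$ produces the $H$-module ${}_\phi Z$.

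For (iii), $\alpha\in\sM(N,H)$ stabilises $[\phi,f]$ iff both $\phi'=\phi$ and $f'=f$. The first equation, $\ad(\alpha(g))\phi(g)=\phi(g)$, is equivalent to $\ad(\alpha(g))=\Id$, i.e., $\alpha(g)\in Z$ for every $g$. Once $\alpha$ is valued in $Z$, the elements $\alpha(g)$, ${}^{\phi(g)}\alpha(h)$, and $\alpha(gh)$ all lie in $Z$ (using that $Z$ is characteristic) and therefore commute with $f(g,h)$; the second equation collapses to $\alpha(g)\cdot{}^{\phi(g)}\alpha(h)=\alpha(gh)$, which together with $\alpha(1_H)=1_N$ is exactly the definition of $Z^1(H,{}_\phi Z)$. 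The hard part throughout is the reverse direction of (i): one has to anticipate the explicit equivalence formula $\psi(a,g)=(a\alpha(g)^{-1},g)$ from the purely abstract orbit relation, after which every remaining verification is mechanical.
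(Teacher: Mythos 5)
The paper does not prove this proposition; it simply attributes it to the cited reference, so there is no in-text argument to compare against. Your proof is the standard one and, as far as I can check, correct: the computation $\tau'(g)=\alpha(g)\tau(g)$ does reproduce \eqref{a_phi_f}, the map $(a,g)\mapsto(a\,\alpha(g)^{-1},g)$ is indeed an equivalence $N\#_{\phi,f}H\to N\#_{\phi',f'}H$ (the terms ${}^{\phi(g)}(\alpha(h)^{-1})$ and $\alpha(g)^{-1}\alpha(g)$ cancel exactly against the factors in $f'$), the Schreier identity $\phi(g)\phi(h)=\ad(f(g,h))\phi(gh)$ gives the $H$-module structure on $Z$, and the stabiliser computation correctly uses centrality of $\alpha(g)$, ${}^{\phi(g)}\alpha(h)$, $\alpha(gh)$ to collapse $f'=f$ to the normalised cocycle condition.

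One step in (i) deserves to be made explicit: what you actually need is that \emph{equivalent} extensions yield orbit-related Schreier systems, whereas you argue only for two transversals of a \emph{common} extension $G$. The reduction is routine but should be stated: if $\psi\colon G\to G'$ is an equivalence, then $\psi^{-1}\circ\tau'$ is a normalised transversal of $G$ whose Schreier system coincides with that of $(G',\tau')$, because $\psi$ restricts to the identity on $N$ and commutes with the projections to $H$; this puts you back in the common-extension situation. With that sentence added, the argument is complete.
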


\subsection{Coupling} The homomorphism from equation ~\eqref{ad_ad} yields a coupling, another group homomorphism, defined by  
\[\varpi =\varpi_G:  H \xrightarrow{\tau} G\xrightarrow{\widehat{\ad}} \ou (N).\]
The coupling does not depend on $\tau$: this justifies the notation $\varpi_G$. By~\eqref{a_phi_f}, equivalent Schreier systems have the same coupling (cf. \cite[Prop. 11.1.1]{Robinson}).
We can say more. 

 \begin{prop} \label{couplings}
 Consider $G,G' \in \sE(N,H)$ with couplings $\varpi,\varpi'$ correspondingly. The following statements hold.
   \begin{enumerate}[label=(\roman*)] 
\item If $\sF_{e,i}(G) = \sF_{e,i}(G')\in \sI(N,H)$, then $\varpi$ and $\varpi'$ are conjugate in $\ou (N)$, that is, there exists $\gamma\in \ou (N)$ such that $\varpi (h)= \gamma \varpi'(h) \gamma^{-1}$ for all $h\in H$. 
    \item If $\sF_{e,g}(G) = \sF_{e,g}(G')\in \sG(N,H)$, then $\varpi$ and $\varpi'$ are equivalent under the $\aut (H)$-action, that is, there exists $\gamma\in \ou (N)$ and $\delta\in\aut (H)$ such that $\varpi ({}^{\delta}h)= \gamma \varpi'(h) \gamma^{-1}$ for all $h\in H$.  
   \end{enumerate}
 \end{prop}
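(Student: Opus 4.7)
The plan is to reduce both statements to a single computation: any morphism of extensions $\psi : G \to G'$ as in~\eqref{extension_isom}, with $\alpha \in \au(N)$ on the kernels and $\beta \in \aut(H)$ on the quotients, induces a relation between the two couplings that involves only $\alpha$ and $\beta$. Part (i) then drops out as the special case $\beta = 1_H$.

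First I would fix a normalised transversal $\tau : H \to G$ for the first extension and observe that $\sigma : h \mapsto \psi(\tau(\beta^{-1}(h)))$ is a normalised transversal for $G'$: commutativity of~\eqref{extension_isom} gives $\pi'(\sigma(h)) = \beta(\pi(\tau(\beta^{-1}(h)))) = h$, and normalisation is inherited because $\psi(1_G)=1_{G'}$ and $\beta^{-1}(1_H)=1_H$. Since the coupling is independent of the transversal, I may use $\sigma$ to compute $\varpi'$.

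The key identity takes place in $\aut(N)$. From $\psi \circ \iota = \iota' \circ \alpha$, one checks that $\alpha \circ \ad(g) = \ad(\psi(g)) \circ \alpha$ in $\aut(N)$ for every $g \in G$, i.e. $\ad(\psi(g)) = \alpha \, \ad(g) \, \alpha^{-1}$. Passing to the quotient $\ou(N)$, the class $\bar\alpha$ of $\alpha$ satisfies $\widehat{\ad}(\psi(g)) = \bar\alpha \cdot \widehat{\ad}(g) \cdot \bar\alpha^{-1}$. Specialising to $g = \tau(\beta^{-1}(h))$ gives
\[ \varpi'(h) \;=\; \widehat{\ad}(\sigma(h)) \;=\; \bar\alpha \cdot \varpi(\beta^{-1}(h)) \cdot \bar\alpha^{-1}. \]
Setting $\gamma = \bar\alpha^{-1} \in \ou(N)$ and $\delta = \beta^{-1} \in \aut(H)$ rearranges this to $\varpi({}^\delta h) = \gamma \, \varpi'(h) \, \gamma^{-1}$, which is exactly (ii); specialising further to $\beta = 1_H$ forces $\delta = 1_H$ and yields (i).

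I expect no real obstacle, only two points of care. First, one must check that $\alpha \in \au(N)$ really represents a class in $\ou(N) = \au(N)/\inn(N)$, which is immediate from the standing hypothesis $\inn(N) \subseteq \au(N)$. Second, the step that lets me compute $\varpi'$ via the transversal $\sigma$ instead of a pre-chosen transversal of $G'$ is precisely the transversal-independence of $\varpi_G$ noted right after the definition~\eqref{ad_ad}, so it is safe to invoke.
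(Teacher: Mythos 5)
Your argument is correct and complete: the identity $\ad'(\psi(g)) = \alpha\,\ad(g)\,\alpha^{-1}$ combined with the transversal $\sigma = \psi\circ\tau\circ\beta^{-1}$ and the transversal-independence of the coupling is exactly the standard computation, and the specialisation $\beta=1$ correctly recovers (i). The paper itself only cites Robinson for (i) and calls (ii) straightforward, so your write-up simply supplies the proof the authors omit, with no divergence in method.
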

 \begin{proof}
The first statement is essentially \cite[Ex. 11.1(2)]{Robinson}. The second statement admits a straightforward proof. 
\end{proof}

\subsection{Eilenberg-MacLane Theorem} Fix a coupling $\varpi$. Note that it determines an action of $H$ on $Z = Z(N)$. We denote the centre with this action by ${}_{\varpi} Z$.

Eilenberg-MacLane Theorem describes the set $\sE_{\varpi}(N,H)$ of all the extensions with this coupling \cite{EM}. 
\begin{prop} \label{EMT} The coupling defines a cocycle $\widetilde{\varpi}\in Z^3 (H,{}_{\varpi}Z)$.
    \begin{enumerate}[label=(\roman*)] 
   \item If $[\widetilde{\varpi}] \neq 0 \in H^3 (H,{}_{\varpi}Z)$, then $\sE_{\varpi}(N,H)=\emptyset$.
   \item If $[\widetilde{\varpi}] = 0 \in H^3 (H,{}_{\varpi}Z)$, then $\sE_{\varpi}(N,H)$ has a free transitive action of $H^2 (H,{}_{\varpi}Z)$.
   \item The set $\sE(N,H)$ is a disjoint union of $\sE_{\varpi}(N,H)$ over all $\varpi\in\hom(H,\ou(N))$.
   \end{enumerate}   
\end{prop}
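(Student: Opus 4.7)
My plan is to use the Schreier-system machinery of Proposition~\ref{sc q} to translate cohomological data into extension data, following the classical Eilenberg--MacLane strategy adapted to our constrained setting $\au(N) \subseteq \aut(N)$.

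First I would construct the obstruction $3$-cocycle. Fix a normalised set-theoretic lift $\phi : H \to \au(N)$ of the coupling $\varpi$; such a lift exists because $\au(N) \twoheadrightarrow \ou(N)$ is surjective. Since $\varpi$ is a homomorphism but $\phi$ typically is not, each automorphism $\phi(g)\phi(h)\phi(gh)^{-1}$ is inner, so I can choose $f(g,h) \in N$ with normalisation $f(g,1_H) = f(1_H,h) = 1_N$ and $\ad(f(g,h)) = \phi(g)\phi(h)\phi(gh)^{-1}$. Comparing the two bracketings of $\phi(g)\phi(h)\phi(k)$ and applying $\ad$ forces
\[
\widetilde{\varpi}(g,h,k) \coloneqq \bigl({}^{\phi(g)}f(h,k) \cdot f(g,hk)\bigr) \cdot \bigl(f(g,h) \cdot f(gh,k)\bigr)^{-1}
\]
to land in $Z = Z(N)$. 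Associativity of the quadruple product $\phi(g)\phi(h)\phi(k)\phi(\ell)$, expanded in the two standard ways, then yields the $3$-cocycle identity for $\widetilde{\varpi}$ in $Z^3(H, {}_\varpi Z)$. A change of the data $(\phi,f)$ by a normalised function $H \to N$ or $H \times H \to Z$ shifts $\widetilde{\varpi}$ by a coboundary, so $[\widetilde{\varpi}] \in H^3(H, {}_\varpi Z)$ is a genuine invariant of $\varpi$, proving (i).

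For (ii) and (iii), I would observe that by Proposition~\ref{sc q} an element of $\sE_\varpi(N,H)$ corresponds to an orbit of Schreier systems $[\phi,f]$ such that the product~\eqref{mult} is associative; direct expansion shows that associativity is equivalent to the normalised identity
\[
f(g,h) \cdot f(gh,k) = {}^{\phi(g)}f(h,k) \cdot f(g,hk).
\]
Hence an extension with coupling $\varpi$ exists exactly when the $f$ above can be corrected by a normalised $2$-cochain $H \times H \to Z$ whose coboundary equals $\widetilde{\varpi}$, i.e., precisely when $[\widetilde{\varpi}] = 0$; this gives (ii). Assuming $[\widetilde{\varpi}] = 0$ and fixing one valid $f$, any other valid Schreier system with the same coupling can be brought (after a suitable $\sM(N,H)$-action) to the form $[\phi, f \cdot z]$ for some $z \in Z^2(H, {}_\varpi Z)$, and equation~\eqref{a_phi_f} shows that the residual $\sM(N,H)$-action modifies $z$ precisely by a normalised $2$-coboundary. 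This produces the announced free transitive action of $H^2(H, {}_\varpi Z)$ on $\sE_\varpi(N,H)$.

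Finally (iv) is immediate: equivalences of extensions are morphisms of extensions with $\alpha = 1_N$ and $\beta = 1_H$, hence preserve the coupling on the nose (not merely up to conjugation as in Proposition~\ref{couplings}), so $\sE(N,H)$ splits as a disjoint union indexed by $\varpi \in \hom(H,\ou(N))$. The main obstacle is verifying that $\widetilde{\varpi}$ is actually a $3$-cocycle and that its class is independent of the choices of $\phi$ and $f$: this is a careful but routine bookkeeping of central factors arising from the non-multiplicativity of $\phi$, cleanest done by expanding the quadruple product in the two standard ways and pushing everything into $Z$ at the end of the computation.
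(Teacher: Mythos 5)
Your argument is correct and is precisely the classical Eilenberg--MacLane obstruction-theory proof, carried out in the Schreier-system formalism of Proposition~\ref{sc q}; the paper itself gives no proof and simply cites \cite{EM} for this result, so your reconstruction matches the intended argument. The only blemish is cosmetic: your item labels are shifted by one (the well-definedness of $[\widetilde{\varpi}]$ is the unnumbered preamble, your ``(ii)'' covers both (i) and the nonemptiness half of (ii), and your ``(iv)'' is the proposition's (iii)), but the mathematical content of all three parts is fully and correctly established.
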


It is now natural to sort out the (virtual) isomorphism classes into different subsets with fixed couplings. Our notations  $\sI_{\varpi}(N,H)$  and $\sG_{\varpi}(N,H)$ are self-explanatory. The phenomena described in Proposition~\ref{couplings} can be described as a left action of the product group $\ou (N) \times \aut (H)$ on the set $\hom (H, \ou (N))$ of all the couplings:
\begin{equation}
\varpi' = (\gamma,\delta) \star \varpi \ \mbox{ if } \ \forall h\in H \ 
\varpi' (h)= \gamma \varpi({}^{\delta^{-1}}h) \gamma^{-1}.    
\end{equation}

\begin{lemma} \label{change_coupling}
Let us consider an extension
\[E=(N \rightarrow N\#_{\phi,f} H \rightarrow H)\in  \sE_{\varpi}(N,H)\, .\]
\begin{enumerate}
    \item If a coupling $\varpi'= (\gamma,1) \star \varpi$ is conjugate to $\varpi$, then $E$ is isomorphic to an extension in  $\sE_{\varpi'}(N,H)$.
    \item If a coupling $\varpi'= (\gamma,\delta) \star \varpi$ is equivalent to $\varpi$, then $E$ is virtually isomorphic to an extension in  $\sE_{\varpi'}(N,H)$. 
\end{enumerate}
\end{lemma}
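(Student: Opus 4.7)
The strategy is to exploit the $K=\au(N)\times\aut(H)$-action on extensions described in equation~\eqref{extension_Eab} and check that it realises the $\star$-action on couplings. Both parts will then follow by taking $E' \coloneqq E(\alpha,\delta)$ for a suitable lift $\alpha \in \au(N)$ of $\gamma \in \ou(N)$; in part (1) we set $\delta = 1$.

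First I would pick a preimage $\alpha \in \au(N)$ of $\gamma$, which exists because $\au(N) \twoheadrightarrow \ou(N)$ is surjective. Then, by the very definition of $E(\alpha,\delta)$, the identity map on the middle group $G$ fits into a commutative diagram of the form~\eqref{extension_isom} with $\alpha$ on $N$ and $\delta$ on $H$, hence produces a virtual isomorphism $E \to E'$. When $\delta = 1$ this is an isomorphism of extensions by the definition recalled just before~\eqref{three_sur}, which handles the ``type'' of equivalence in each case.

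The remaining, and only substantive, point is to verify that $E' \in \sE_{\varpi'}(N,H)$. Using $\tau'(h) \coloneqq \tau(\delta^{-1}(h))$ as a normalised transversal for $E'$ (noting $(\delta\pi)(\tau'(h)) = h$ and $\tau'(1)=1$), and identifying $N$ inside $E'$ via $\iota \circ \alpha^{-1}$, one computes directly
\[
\tau'(h) \cdot (\iota\alpha^{-1})(a) \cdot \tau'(h)^{-1}
= \iota\bigl(\phi(\delta^{-1}(h))(\alpha^{-1}(a))\bigr)
= (\iota\alpha^{-1})\bigl(\alpha\,\phi(\delta^{-1}(h))\,\alpha^{-1}(a)\bigr),
\]
so that the Schreier datum $\phi'$ of $E'$ satisfies $\phi'(h) = \alpha\circ\phi(\delta^{-1}(h))\circ\alpha^{-1}$. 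Projecting to $\ou(N)$ gives $\varpi_{E'}(h) = \widehat{\alpha}\,\varpi(\delta^{-1}(h))\,\widehat{\alpha}^{-1} = \gamma\,\varpi(\delta^{-1}(h))\,\gamma^{-1} = ((\gamma,\delta)\star\varpi)(h) = \varpi'(h)$, as required.

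The only real obstacle is the bookkeeping in the last step: one must track how rescaling the transversal by $\delta^{-1}$ and re-identifying $N$ through $\alpha^{-1}$ twists $\phi$ by $\alpha$ on the outside and $\delta^{-1}$ on the inside. There is no genuine difficulty, because everything is already encoded in the $K$-action~\eqref{extension_Eab}; the lemma is in essence the statement that this action intertwines with the $\star$-action on $\hom(H,\ou(N))$.
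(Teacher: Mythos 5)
Your proposal is correct and follows essentially the same route as the paper: both realise the target extension as the image of $E$ under the action of $(\alpha,\delta)\in\au(N)\times\aut(H)$ lifting $(\gamma,\delta)$, and both reduce the lemma to checking that this action on extensions covers the $\star$-action on couplings. The only difference is presentational — the paper writes the virtual isomorphism explicitly on a Schreier system as $(n,h)\mapsto({}^{\widehat{\gamma}}n\cdot\beta(h),{}^{\delta}h)$, introducing an auxiliary function $\beta:H\to N$ to land on a prescribed lift $\phi'$, whereas you take the tautological identity map on $G$ for $E(\alpha,\delta)$ and compute its coupling directly, which mildly streamlines the bookkeeping.
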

\begin{proof} We only prove the second statement. The proof of the first one is identical, once we let $\delta=1$.

For convenience, let us translate the slightly different relation $\varpi'=(\gamma,\delta^{-1}) \star \varpi$ into a statement about $\au (N)$. The couplings lift to normalised functions $\phi,\phi' : H \rightarrow \au (N)$, while the element $\gamma$ lifts to $\widehat{\gamma} \in \au (N)$. Finally, we know that there exists another normalised function $\beta: H \rightarrow N$ such that
\[
\forall h\in H \quad  \widehat{\gamma} \cdot \phi'(h) =  \ad (\beta(h)) \cdot \phi ({}^{\delta}h) \cdot  \, \widehat{\gamma} \in \au (N). 
\]
For all $n\in N$ and $h\in H$ consider elements
\[
\widetilde{n} \coloneqq ( {}^{\widehat{\gamma}}n, 1), \;
\widetilde{h} \coloneqq ( \beta (h), {}^{\delta} h) \in 
N\#_{\phi,f} H \, .
\]
Observe that 
\begin{align*}
\widetilde{h} \cdot \widetilde{n}   = ( \beta (h), {}^{\delta}h) \cdot ( {}^{\widehat{\gamma}}n, 1)  \, & = ( \beta (h) \, \cdot \, {}^{\phi ({}^{\delta}h)\widehat{\gamma}} n, {}^{\delta}h)=\\
( {}^{\ad(\beta(h))\phi ({}^{\delta}h)\widehat{\gamma}} n \cdot \beta(h), {}^{\delta}h) = &  \, 
( {}^{\widehat{\gamma} \phi' (h)} n, 1) \cdot (\beta(h), {}^{\delta}h) = 
\widetilde{ {}^{\phi' (h)} n} \cdot \widetilde{h} \, .
\end{align*} 
It follows that the map 
\[
N\#_{\phi,f} H \rightarrow N\#_{\phi,f} H \, , \quad
(n,h) \mapsto \widetilde{n} \widetilde{h} = ( {}^{\widehat{\gamma}}n \cdot \beta (h), {}^{\delta} h)
\]
defines a virtual isomorphism of extensions from $N\#_{\phi,f} H$ to $N\#_{\phi',f'} H$ for some appropriately defined $f'$. 
\end{proof}

%

Let us examine the Eilenberg-MacLane bijection $\sE_{\varpi}(N,H) \leftrightarrow H^2 (H,{}_{\varpi}Z)$
(cf. Proposition~\ref{EMT}). Choose any $E \in \sE_{\varpi}(N,H)$, represent it by a Schreier system $E=\sF_{se} [\phi,f]$ for some $[\phi,f] \in \sS (N,H)$. Now we can write both the  $H^2 (H,{}_{\varpi}Z)$-torsor structure on $\sE_{\varpi}(N,H)$ and the bijection explicitly:
\begin{align}
&\Upsilon : H^2 (H,{}_{\varpi}Z) \times \sE_{\varpi}(N,H) \to \sE_{\varpi}(N,H), \ \ ([\tau],E) \mapsto \sF_{se} [\phi, \tau \cdot f] ,\label{EMu}
\\
&\Upsilon_E : H^2 (H,{}_{\varpi}Z) \to \sE_{\varpi}(N,H), \ \ [\tau] \mapsto \Upsilon ([\tau],E)=\sF_{se} [\phi, \tau \cdot f] . \label{EMue}
\end{align} 
Our ultimate aim is to understand the action of the group\footnote{This group is called $K$ in the introduction.} $\au(N)\times \aut (H)$ on $\sE(N, H)$, at least in some cases. Some elements of this group change the coupling, but this can be controlled using Lemma~\ref{change_coupling}. Consider the coupling stabilisers inside this group. We can see that
\begin{equation} \label{inclusions}
(\au(N)\times \aut (H))_{\varpi} \supseteq \au(N)_{\varpi}\times \aut (H)_{\varpi} \supseteq \ \inn (N) \times \inn (H)
\end{equation}
but not much more in this generality.
The stabilisers act by conjugation on both  $H^2 (H,{}_{\varpi}Z)$ and $\sE_{\varpi}(N,H)$. The next lemma follows directly from formulas~\eqref{EMu} and \eqref{EMue}.
\begin{lemma} \label{eq_lem} The following statements hold.
 \begin{enumerate}
     \item The maps $\Upsilon$ and $\Upsilon_E$ do not depend on the choice of the Schreier system representing $E$.
     \item  The action map $\Upsilon$ is $(\au(N)\times \aut (H))_{\varpi}$-equivariant.
     \item The bijection $\Upsilon_E$ is $\au(N)_{\varpi}$-equivariant if and only if $E$ is a fixed point of the action of $\au(N)_{\varpi}$. 
     \item The bijection $\Upsilon_E$ is $(\au(N)\times \aut (H))_{\varpi}$-equivariant if and only if $E$ is a fixed point of the action of $(\au(N)\times \aut (H))_{\varpi}$.
 \end{enumerate}   
\end{lemma}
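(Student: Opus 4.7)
The plan is to exploit two facts in tandem: the cocycle $\tau$ takes values in the centre $Z$, and the $H^2$-torsor action on $\sE_\varpi(N,H)$ is free. Most of the work lies in checking that the explicit formula in equation~\eqref{EMu} interacts correctly with the $\sM(N,H)$-action of Proposition~\ref{sc q} and with the $(\au(N)\times\aut(H))_\varpi$-action.

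For part (1), I would pick two Schreier systems $[\phi,f]$ and $[\phi',f']$ representing the same $E$. By Proposition~\ref{sc q}(i) they are related by some $\alpha \in \sM(N,H)$, namely $\phi' = \ad(\alpha)\cdot\phi$ and $f'$ is given by formula~\eqref{a_phi_f}. I would then check that $[\phi,\tau\cdot f]$ and $[\phi',\tau\cdot f']$ also differ by the same $\alpha$. Substituting into~\eqref{a_phi_f} and using that each $\tau(g,h)\in Z$ is central — so it commutes past $\alpha(g)$, ${}^{\phi(g)}\alpha(h)$ and $\alpha(gh)^{-1}$ — gives $\alpha\cdot[\phi,\tau\cdot f] = [\phi',\tau\cdot f']$. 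Hence $\Upsilon$ is well defined, and therefore so is $\Upsilon_E$.

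For part (2), I would verify equivariance directly at the level of Schreier systems. An element $(\alpha,\beta)\in (\au(N)\times\aut(H))_\varpi$ acts on a representative $[\phi,f]$ via the transport formulas underlying~\eqref{extension_Eab}; because $(\alpha,\beta)$ stabilises the coupling $\varpi$, the resulting data $[\phi'',f'']$ still lifts $\varpi$ (so the ${}_\varpi Z$-module structure is preserved) and the effect on $f$ is by a combination of twisting by $\alpha$ on the target and precomposing by $\beta$ on the source. Since $\tau$ takes central values, twisting $f$ by $\tau$ commutes with this operation, the result on $\tau$ being exactly the natural $(\alpha,\beta)$-action on $H^2(H,{}_\varpi Z)$. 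This is precisely the identity $\Upsilon((\alpha,\beta)\cdot[\tau],(\alpha,\beta)\cdot E) = (\alpha,\beta)\cdot\Upsilon([\tau],E)$.

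For parts (3) and (4), I would reduce to (2) plus freeness of the torsor action. Write $K'$ for either $\au(N)_\varpi$ or the full $(\au(N)\times\aut(H))_\varpi$. Equivariance of $\Upsilon_E$ under $K'$ asks that $\Upsilon_E(\sigma\cdot[\tau]) = \sigma\cdot\Upsilon_E([\tau])$ for all $\sigma\in K'$ and $[\tau]\in H^2(H,{}_\varpi Z)$. By (2) the right-hand side equals $\Upsilon(\sigma\cdot[\tau],\sigma\cdot E)$, while the left-hand side is $\Upsilon(\sigma\cdot[\tau],E)$. As $[\tau]$ ranges over $H^2$, so does $\sigma\cdot[\tau]$; by Proposition~\ref{EMT}(ii) the $H^2$-action on $\sE_\varpi(N,H)$ is free, so the two agree on a single argument (equivalently on all) if and only if $\sigma\cdot E = E$. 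Applied for every $\sigma\in K'$, this is exactly the statement that $E$ is a fixed point of $K'$. The main obstacle is the bookkeeping in (2): one must carefully track how an element of $(\au(N)\times\aut(H))_\varpi$ acts on a Schreier system and confirm that this action distributes over the multiplication $f\mapsto\tau\cdot f$, with the induced twist on $\tau$ being precisely the natural action on $H^2$. Everything else is a formal consequence of centrality of $\tau$ and freeness of the torsor.
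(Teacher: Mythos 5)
Your argument is correct and is exactly the verification the paper leaves implicit: the paper gives no proof, asserting only that the lemma ``follows directly from formulas~\eqref{EMu} and \eqref{EMue}.'' Your three steps --- centrality of $\tau$ making the twist $f\mapsto\tau\cdot f$ commute with the $\sM(N,H)$-action of~\eqref{a_phi_f} and with the $(\au(N)\times\aut(H))_\varpi$-action, then freeness of the torsor (or simply evaluating at $[\tau]=[0]$) to reduce equivariance of $\Upsilon_E$ to $E$ being a fixed point --- supply precisely the routine check the authors had in mind.
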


Notice that $[0] \in H^2 (H,{}_{\varpi}Z)$ is fixed but existence of a fixed point in $\sE_{\varpi}(N,H)$ 
cannot be guaranteed, in general. A good algebraic example to keep in mind is a group $G$, acting on $X=G$ by left multiplication. Another copy of $G$ acts on $G$ by conjugation. Then the action map $G\times X \to X$ is $G$-equivariant but $X$ has no fixed point, so no equivariant bijection $G\leftrightarrow X$ exists.

Let us summarize what we know about (virtual) isomorphism classes of extensions at this point. 

\begin{thm} \label{thm1}
Let $\hom_0(H,\ou(N))$ be the set of all such couplings $\varpi$ that $[\widetilde{\varpi}] = 0$. The following statements hold. 
    \begin{enumerate}[label=(\roman*)] 
   \item The set $\hom_0(H,\ou(N))$ is closed under the action of $\au(N)\times \aut (H)$ on $\hom(H,\ou(N))$.
   \item The sets of (virtual) isomorphism classes can be described as disjoint unions over representatives of the orbits as
\begin{align*}
\sI(N,H) &= \coprod_{\varpi \in \hom_0(H,\ou(N))/\au (N)} \dfrac{\sE_{\varpi}(N,H)}{\au (N)_{\varpi}}  , 
\\
\sG(N,H) &= \coprod_{\varpi \in \hom_0(H,\ou(N))/(\au (N)\times \aut (H))} \dfrac{\sE_{\varpi}(N,H)}{(\au (N) \times \aut (H))_{\varpi}} . 
\end{align*} 
   \end{enumerate}      
\end{thm}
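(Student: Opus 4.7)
The plan is to reduce both statements to the fibre decomposition
\[
\sE(N,H) \;=\; \coprod_{\varpi \in \hom_0(H,\ou(N))} \sE_{\varpi}(N,H)
\]
from Proposition~\ref{EMT}(iii) together with the standard fact that, for a group $K$ acting on a disjoint union $X=\coprod_i X_i$ whose fibres are permuted through an induced $K$-action on the index set, one has $X/K \cong \coprod_{[i]\in I/K} X_i/K_i$, where $K_i$ denotes the $K$-stabiliser of $i$. Everything then hinges on one calculation: that the $\au(N)\times\aut(H)$-action on $\sE(N,H)$ lifts the $\star$-action on $\hom(H,\ou(N))$ through the natural projection $\au(N)\times \aut(H)\twoheadrightarrow \ou(N)\times \aut(H)$.

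For part (i), Proposition~\ref{EMT}(i)--(ii) identifies $\hom_0(H,\ou(N))$ with the set of couplings $\varpi$ for which $\sE_{\varpi}(N,H)\neq \emptyset$, equivalently with the image of the coupling map $\mathrm{cpl}:\sE(N,H)\to \hom(H,\ou(N))$, $E\mapsto \varpi_E$. So it suffices to show that $\mathrm{cpl}$ is $\au(N)\times\aut(H)$-equivariant. If $\tau$ is a normalised transversal of $E$, then $\tau\circ\beta^{-1}$ is one for $E(\alpha,\beta)$, whose projection is $\beta\circ \pi$. Computing conjugation through the twisted inclusion $\iota\circ\alpha^{-1}$, one gets $g\cdot \iota(\alpha^{-1}n)\cdot g^{-1}=\iota(\alpha^{-1}(\alpha\,\ad(g)\,\alpha^{-1}(n)))$, so the new coupling is
\[
\varpi_{E(\alpha,\beta)}(h)\;=\;\widehat{\alpha}\,\varpi_E(\beta^{-1}h)\,\widehat{\alpha}^{-1}\;=\;(\widehat{\alpha},\beta)\star \varpi_E(h),
\]
where $\widehat{\alpha}$ is the image of $\alpha$ in $\ou(N)$. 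Thus $\mathrm{cpl}$ is equivariant and its image $\hom_0(H,\ou(N))$ is preserved by the $\star$-action.

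For part (ii), specialise the general quotient principle to $X=\sE(N,H)$ with the fibration above. Taking $K=\au(N)$ and recalling $\sI(N,H)=\sE(N,H)/\au(N)$ gives the first formula; taking $K=\au(N)\times\aut(H)$ and recalling $\sG(N,H)=\sE(N,H)/(\au(N)\times\aut(H))$ gives the second. The coupling of any extension in the orbit of $E$ lies in the $\star$-orbit of $\varpi_E$ by the equivariance above, and Lemma~\ref{change_coupling} provides the converse: any element of the $\star$-orbit of $\varpi_E$ is realised by an extension (virtually) isomorphic to $E$. So the $K$-orbit of $\varpi_E$ in $\hom_0(H,\ou(N))$ indexes precisely the fibres that merge into a single $K$-orbit in $\sE(N,H)$, and within the fibre over $\varpi$ the group $K$ acts through its stabiliser $K_{\varpi}$.

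The main obstacle is really just the coupling-transformation identity $\varpi_{E(\alpha,\beta)}=(\widehat{\alpha},\beta)\star\varpi_E$; once it is established, part (i) is immediate and part (ii) is the orbit-stabiliser formula applied fibre-wise. All of the conceptual content (non-emptiness criterion, torsor structure, the interplay between $\au(N)$ and $\ou(N)$) is already packaged in Proposition~\ref{EMT} and Lemma~\ref{change_coupling}.
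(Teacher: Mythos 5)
Your proposal is correct and follows essentially the same route as the paper: both arguments rest on the equivariance of the coupling assignment under the $\au(N)\times\aut(H)$-action (which the paper asserts informally as the action ``moving extensions around'' and you verify explicitly), the fibre decomposition of $\sE(N,H)$ from Proposition~\ref{EMT}(iii), Lemma~\ref{change_coupling}, and the fibrewise orbit--stabiliser decomposition. Your explicit computation of $\varpi_{E(\alpha,\beta)}=(\widehat{\alpha},\beta)\star\varpi_E$ is a useful elaboration of a step the paper leaves implicit, but it is not a different proof.
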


\begin{proof}
A coupling $\varpi$ belongs to $\hom_0(H,\ou(N))$ if and only if there exists an extension with this coupling. The action of $\au(N)\times \aut (H)$ ``moves'' extensions around. The first statement follows.

Thanks to Lemma~\ref{change_coupling}, we can write
\[
\sI(N,H) = \coprod \sI_{\varpi}(N,H), \ \ 
\sG(N,H) = \coprod \sG_{\varpi}(N,H)
\]
where the disjoint unions are taken over the same representatives of the orbits as in the second statement. The description of $\sI_{\varpi}(N, H)$ and $\sG_{\varpi}(N, H)$ as quotients of $\sE_{\varpi}(N, H)$ immediately follows from the definitions of (virtual) isomorphism.
\end{proof}

The quotient sets in the statement of Theorem~\ref{thm1} are $\sI_{\varpi}(N,H)$ and $\sG_{\varpi}(N,H)$. Thanks to Lemma~\ref{eq_lem}, once the existence of an extension, fixed under the groups, is established, this can be rewritten as 
\begin{equation}
  \sI_{\varpi}(N,H) = \dfrac{H^2 (H,{}_{\varpi}Z)}{\au (N)_{\varpi}}, \ \ 
\sG_{\varpi}(N,H) = \dfrac{H^2 (H,{}_{\varpi}Z)}{ (\au (N)\times \aut (H))_{\varpi}}.   
\end{equation}


\subsection{Semidirect products} 
A semidirect product $N \rtimes_{\phi} H$ is a crossed product $N\#_{\phi,1} H$ with the function $f$ being $1$. This condition forces the second function $\phi: H \rightarrow \au (N)$ to be a homomorphism.

Consider the associated set of derivations
\[ \Der_\phi (H, N) \coloneqq \{ \alpha : H \rightarrow N \, \mid \, \forall h_1,h_2 \in H \ 
\alpha (h_1) \cdot {}^{\phi (h_1)} \alpha (h_2) = \alpha (h_1 \cdot h_2)\} .\]
Their role is ``to compare'' different semidirect products.
\begin{prop} \label{semidirpro}
Consider a semidirect product $E = N \rtimes_{\phi} H$.
\begin{enumerate}
    \item The derivations are in bijection with the group complements of $N$ in $E$:
    \[
\Der_\phi (H, N) \ni \alpha \ \longleftrightarrow \ \{ (\alpha (h),h) \, | \, h\in H \}.
    \]
    \item Consider a function $\beta: H\to N$. Define a new function
    \[\psi \coloneqq \ad (\beta)\cdot \phi : H \rightarrow \au (N) \, ,\]
    which, on elements, can be written as
    \[ \forall h \in H \  \forall n \in N \quad {}^{\psi(h)}n = \beta (h)\cdot {}^{\phi(h)}n \cdot \beta (h)^{-1} \, .
    \]
    The function $\psi$ is a homomorphism if and only if the composition
    \[
    \overline{\beta} : H \xrightarrow{\beta} N \twoheadrightarrow N/Z =\inn (N)
    \]
    is a derivation, i.e., belongs to $\Der_{\, \overline{\phi}\, } (H, \inn (N))$. 
        \item Let $E' = N \rtimes_{\psi} H$ be another extension. The extensions $E$ and $E'$ are equivalent if and only if there exists $\alpha \in \Der_\phi (H, N)$ such that 
    \[\psi = \ad (\alpha)\cdot \phi \, .\]
\end{enumerate}
\end{prop}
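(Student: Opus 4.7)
The plan is to treat the three parts separately, with part (1) by a direct check, part (2) by a computation modulo $Z$, and part (3) by specialising the general $\sM(N,H)$-action on Schreier systems from formula~\eqref{a_phi_f}.

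For (1), I would first observe that a complement $C$ of $N$ in $E=N\rtimes_{\phi} H$ must meet each coset $N\cdot(1,h)$ in exactly one element, hence has the form $C=\{(\alpha(h),h):h\in H\}$ for some $\alpha:H\to N$ with $\alpha(1_H)=1_N$. Then I would multiply two such elements using \eqref{mult} with $f=1$:
\[(\alpha(h_1),h_1)\cdot(\alpha(h_2),h_2)=\bigl(\alpha(h_1)\cdot {}^{\phi(h_1)}\alpha(h_2),\, h_1h_2\bigr),\]
and observe this lies in $C$ if and only if $\alpha(h_1h_2)=\alpha(h_1)\cdot{}^{\phi(h_1)}\alpha(h_2)$, which is precisely the derivation condition. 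Conversely any derivation yields a subgroup by reversing the computation, and $C\cap N=\{1\}$, $CN=E$ are automatic.

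For (2), I would compute $\psi(h_1)\psi(h_2)$ directly. Using $\ad(n)\phi(h)\ad(m)=\ad(n\cdot{}^{\phi(h)}m)\phi(h)$, I get
\[\psi(h_1)\psi(h_2)=\ad\!\bigl(\beta(h_1)\cdot{}^{\phi(h_1)}\beta(h_2)\bigr)\cdot\phi(h_1h_2),\]
whereas $\psi(h_1h_2)=\ad(\beta(h_1h_2))\cdot\phi(h_1h_2)$. Hence $\psi$ is a homomorphism iff $\beta(h_1h_2)$ and $\beta(h_1)\cdot{}^{\phi(h_1)}\beta(h_2)$ differ by an element of $Z=\ker(\ad)$. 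Passing to the quotient $N/Z=\inn(N)$ with the induced $H$-action $\overline{\phi}$, this is exactly the condition $\overline{\beta}\in\Der_{\overline{\phi}}(H,\inn(N))$.

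For (3), I would invoke Proposition~\ref{sc q}(i), which identifies $\sE(N,H)$ with $\sS(N,H)/\sM(N,H)$. The extensions $E=N\rtimes_{\phi}H$ and $E'=N\rtimes_{\psi}H$ correspond to the Schreier systems $[\phi,1]$ and $[\psi,1]$, so they are equivalent iff some $\alpha\in\sM(N,H)$ sends one to the other via \eqref{a_phi_f}. The first component gives $\psi=\ad(\alpha)\cdot\phi$, while the second component forces
\[1=\alpha(g)\cdot{}^{\phi(g)}\alpha(h)\cdot\alpha(gh)^{-1},\]
i.e., $\alpha\in\Der_{\phi}(H,N)$. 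Conversely any such $\alpha$ produces an equivalence. The main obstacle is mostly bookkeeping: ensuring that the normalisation $\alpha(1_H)=1_N$ (built into $\sM(N,H)$) is compatible with the derivation condition, which it is automatically, and double-checking signs and compositions in the $\ad$-$\phi$ interplay of part (2).
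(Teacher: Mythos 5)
Your proposal is correct. Parts (1) and (2) match the paper's treatment: the paper simply cites Robinson for (1) (your direct graph-of-a-derivation argument is the standard proof of that citation), and your computation for (2) — comparing $\psi(h_1)\psi(h_2)=\ad\bigl(\beta(h_1)\cdot{}^{\phi(h_1)}\beta(h_2)\bigr)\phi(h_1h_2)$ with $\ad(\beta(h_1h_2))\phi(h_1h_2)$ and quotienting by $Z=\ker(\ad)$ — is the same calculation the paper performs, merely phrased at the level of $\ad$ rather than by evaluating both sides on a generic $n\in N$. For part (3) you take a genuinely different, if closely related, route: the paper writes down an arbitrary equivalence $(n,h)\mapsto(n\cdot\alpha(h),h)$, compares the two multiplications by hand, and extracts the two conditions by specialising $n=1$ and $h_2=1$ in the resulting identity; you instead invoke Proposition~\ref{sc q}(i) and read both conditions off the two components of the $\sM(N,H)$-action \eqref{a_phi_f} applied to the Schreier systems $[\phi,1]$ and $[\psi,1]$. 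The two are computationally equivalent — formula \eqref{a_phi_f} encodes exactly the paper's hand computation — but your version buys brevity and reuse of established machinery, while the paper's direct argument is self-contained and makes the equivalence map explicit. Your attention to the normalisation $\alpha(1_H)=1_N$ is well placed and indeed automatic, since the derivation identity at $h_1=h_2=1_H$ forces it.
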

\begin{proof}
The first statement is \cite[Prop. 11.1.2]{Robinson}.  To prove the second statement, we need the two calculations:
\[
{}^{\psi(h_1h_2)}n = \beta (h_1 h_2)\cdot {}^{\phi(h_1 h_2 )}n \cdot \beta (h_1h_2)^{-1} = {}^{\overline{\beta} (h_1h_2)} ({}^{\phi(h_1h_2)}n) \, , 
\]
\[
{}^{\psi(h_1)}({}^{\psi(h_2)}n) = {}^{\overline{\beta} (h_1)\phi(h_1)} ( \beta (h_2) \cdot {}^{\phi(h_2)}n \beta (h_2)^{-1}) = {}^{\overline{\beta}(h_1) \cdot {}^{\phi(h_1)}\overline{\beta} (h_2)} ({}^{\phi(h_1h_2)}n) \, .  
\]
Since ${}^{\phi(h_1h_2)}n$ could be any element of $N$, $\psi$ being a homomorphism is equivalent to $\overline{\beta}$ being a derivation.

To prove the third statement, observe that an equivalence of extensions must be of the form 
\[
    N \rtimes_{\psi} H \rightarrow N \rtimes_{\phi} H , \quad 
    (n,h) \mapsto (n \cdot \alpha(h),h)    
\]
for some $\alpha: N \rightarrow H$ such that $\alpha(1)=1$. 
Let $(n_1,h_1), (n_2,h_2) \in E'$. First multiplying in $E'$ and then applying the above map yields
\[ \Big( n_1 \cdot {}^{\psi(h_1)}n_2 \cdot \alpha(h_1h_2) , h_1h_2\Big).\]
On the other hand, applying the above map and then multiplying in $E$ yields
\[ \Big(n_1 \cdot \alpha(h_1) \cdot {}^{\phi(h_1)}\big(n_2 \cdot \alpha(h_2)\big), h_1h_2 \Big).\]
The equality of these two expressions is equivalent to the condition:
\begin{equation} \label{equat_hnn}
 {}^{\psi(h_1)}n \cdot \alpha(h_1h_2) = \alpha(h_1) \cdot {}^{\phi(h_1)}\big(n \cdot \alpha(h_2)\big) \, .   
\end{equation}
For $n=1$, this condition reads as $\alpha \in \Der_\phi (H, N)$.
For $h_2=1$, it reads as $\psi = \ad (\alpha)\cdot \phi$. 
In the opposite direction, we assume that $\alpha \in \Der_\phi (H, N)$ and $\psi = \ad (\alpha)\cdot \phi$, then we can easily transform the left hand side of \eqref{equat_hnn} into the right hand side. 
\end{proof}

Note that $\phi$ and $\psi = \ad (\beta) \cdot \phi$ in part (2) of Proposition~\ref{semidirpro} have the same coupling. In the opposite direction, $\phi$ and $\psi$ have the same coupling if and only if $\psi = \ad (\beta) \cdot \phi$. This makes the classification of semidirect products inside $\sE_{\varpi} (N, H)$ a somewhat subtle problem, as explained in the next corollary.  
\begin{cor}
Consider a homomorphism $\phi: H \rightarrow \au (N)$ with the coupling $\varpi$. Then $\sE_{\varpi} (N, H)$ has a unique semidirect product if and only if the natural map  $\Der_{\phi} (H, N)\rightarrow\Der_{\, \overline{\phi}\, } (H, \inn (N))$ is surjective    
\end{cor}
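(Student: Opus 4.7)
The plan is to parameterise the equivalence classes of semidirect products inside $\sE_{\varpi}(N,H)$ and then read off the criterion for there being only one such class. I would set up the bijection so that the orbits correspond to the cokernel of the natural map $\Der_{\phi}(H, N)\to \Der_{\,\overline{\phi}\,}(H,\inn(N))$, after which the corollary is immediate.

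First, I would describe the semidirect products in $\sE_{\varpi}(N,H)$. Any such extension is of the form $N\rtimes_{\psi}H$ for some homomorphism $\psi: H\to \au(N)$ whose coupling is $\varpi$. Since $\phi$ and $\psi$ share the same coupling, for each $h\in H$ the automorphism $\psi(h)\phi(h)^{-1}$ is inner, so $\psi = \ad(\beta)\cdot \phi$ for some normalised function $\beta: H \to N$. By part (2) of Proposition~\ref{semidirpro}, such a $\psi$ is a homomorphism if and only if $\overline{\beta}\in\Der_{\,\overline{\phi}\,}(H,\inn(N))$. Since $\ad(\beta)$ depends only on $\overline{\beta}$, this parameterises the semidirect products in $\sE_{\varpi}(N,H)$ by the set $\Der_{\,\overline{\phi}\,}(H,\inn(N))$, with $\phi$ itself corresponding to $\overline{\beta}=1$.

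Next, I would compare these extensions to the base $N\rtimes_{\phi}H$ via part (3) of Proposition~\ref{semidirpro}: the extension $N\rtimes_{\ad(\beta)\phi}H$ is equivalent to $N\rtimes_{\phi}H$ precisely when there exists $\alpha\in\Der_{\phi}(H,N)$ with $\ad(\alpha)\cdot \phi = \ad(\beta)\cdot \phi$. This equality of functions $H\to \au(N)$ holds if and only if $\overline{\alpha}=\overline{\beta}$ as functions $H\to \inn(N)$. Therefore the class of $N\rtimes_{\ad(\beta)\phi}H$ coincides with that of $N\rtimes_{\phi}H$ exactly when $\overline{\beta}$ lies in the image of the restriction map $\Der_{\phi}(H,N)\to\Der_{\,\overline{\phi}\,}(H,\inn(N))$.

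Since $N\rtimes_{\phi}H$ itself is a semidirect product in $\sE_{\varpi}(N,H)$, uniqueness of the semidirect product class in $\sE_{\varpi}(N,H)$ is equivalent to requiring that every element of $\Der_{\,\overline{\phi}\,}(H,\inn(N))$ arises from $\Der_{\phi}(H,N)$, i.e., that the natural map is surjective. I do not anticipate a serious obstacle: the only points that need a quick sanity check are that modifying $\phi$ by a pointwise inner automorphism genuinely preserves the coupling (immediate from $\inn(N)\subseteq \ker(\au(N)\to \ou(N))$), and that the parameterisation by $\overline{\beta}$ is well defined, both of which are routine.
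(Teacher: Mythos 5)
Your argument is correct and is essentially the proof the paper intends (the paper leaves the corollary unproved, but the preceding remark that $\phi$ and $\psi$ share a coupling iff $\psi=\ad(\beta)\cdot\phi$, combined with parts (2) and (3) of Proposition~\ref{semidirpro}, is exactly your parameterisation of semidirect products by $\Der_{\,\overline{\phi}\,}(H,\inn(N))$ and the identification of the equivalence classes with the cokernel of the restriction map). No gaps.
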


Of course, this map is not always surjective: there is an exact sequence
\[
\Der_{\phi} (H,N)\rightarrow\Der_{\, \overline{\phi}\, } (H, \inn (N)) \rightarrow H^2 (H, {}_{\phi}Z) 
\]
that controls the lack of surjectivity. We will not use it, so supply no proof. We finish this section with the computation of the action of $\au(N)\times \aut(H)$ on a semidirect product.
\begin{lemma} \label{action_on_sd}
Consider a semidirect product $E = N \rtimes_{\phi} H$ and a pair of automorphisms $(\alpha,\beta)\in\au(N)\times \aut(H)$. Then, as elements of $\sE(N,H)$,
\[
E (\alpha, \beta) = N \rtimes_{\psi} H \quad \mbox{ where }  \quad \psi (h) = \alpha \cdot \phi (\beta^{-1}(h)) \cdot \alpha^{-1}\, . 
\]
\end{lemma}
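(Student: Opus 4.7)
The plan is to unpack the definition of $E(\alpha,\beta)$ from equation~\eqref{extension_Eab} and then exhibit an explicit equivalence of extensions with $N\rtimes_\psi H$. Writing $G = N\rtimes_\phi H$ with underlying set $N\times H$, the standard inclusion and projection are $\iota(n) = (n,1)$ and $\pi(n,h)=h$. Hence $E(\alpha,\beta)$ has the same underlying group $G$, but with inclusion $\iota\circ\alpha^{-1}: n \mapsto (\alpha^{-1}(n),1)$ and projection $\beta\circ\pi: (n,h)\mapsto \beta(h)$. To establish the identity $E(\alpha,\beta) = N\rtimes_\psi H$ in $\sE(N,H)$, I need a group isomorphism $\Psi: G \to N\rtimes_\psi H$ such that $\Psi\circ(\iota\circ\alpha^{-1}) = \iota_\psi$ and $\pi_\psi \circ \Psi = \beta\circ\pi$.

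The first commutativity condition forces $\Psi(m,1) = (\alpha(m),1)$, while the second forces the second coordinate of $\Psi(n,h)$ to be $\beta(h)$. This uniquely suggests the candidate
\[ \Psi : G \longrightarrow N\rtimes_\psi H, \qquad (n,h) \longmapsto (\alpha(n),\beta(h)), \]
which is clearly a bijection of sets and satisfies the two commutativity conditions by construction. It remains to check that $\Psi$ is a group homomorphism, and this is where the formula for $\psi$ is dictated by the calculation.

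Concretely, expanding $\Psi\bigl((n_1,h_1)\cdot (n_2,h_2)\bigr) = \Psi\bigl(n_1\cdot{}^{\phi(h_1)}n_2,\,h_1h_2\bigr)$ gives
\[ \bigl(\alpha(n_1)\cdot \alpha\bigl({}^{\phi(h_1)}n_2\bigr),\,\beta(h_1)\beta(h_2)\bigr), \]
while $\Psi(n_1,h_1)\cdot \Psi(n_2,h_2)$ in $N\rtimes_\psi H$ equals
\[ \bigl(\alpha(n_1)\cdot {}^{\psi(\beta(h_1))}\alpha(n_2),\,\beta(h_1)\beta(h_2)\bigr). \]
These agree for every $n_2\in N$ exactly when $\psi(\beta(h_1)) = \alpha\cdot \phi(h_1)\cdot \alpha^{-1}$ as elements of $\au(N)$; substituting $h=\beta(h_1)$ gives the stated formula $\psi(h) = \alpha\cdot \phi(\beta^{-1}(h))\cdot \alpha^{-1}$.

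The calculation is essentially forced and there is no real obstacle: one just has to be careful to distinguish $E(\alpha,\beta)$ as a recoordinatised extension (same group, new $\iota$ and $\pi$) from the concrete semidirect product $N\rtimes_\psi H$. The mild subtlety is merely that the formula for $\psi$ involves $\beta^{-1}$ rather than $\beta$, which emerges from re-indexing the variable $h_1\mapsto \beta^{-1}(h)$ after transferring structure along $\Psi$.
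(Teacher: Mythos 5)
Your proposal is correct and follows essentially the same route as the paper: the paper works with the elements $\widetilde{n}=(\alpha^{-1}(n),1)$ and $\widetilde{h}=(1,\beta^{-1}(h))$ inside $N\rtimes_\phi H$ and derives the commutation relation $\widetilde{h}\cdot\widetilde{n}=\widetilde{{}^{\psi(h)}n}\cdot\widetilde{h}$, which is exactly the homomorphism condition you verify for the inverse map $\Psi:(n,h)\mapsto(\alpha(n),\beta(h))$. Your explicit check of the two commutative squares for the re-coordinatised inclusion $\iota\circ\alpha^{-1}$ and projection $\beta\circ\pi$ is a welcome bit of extra care, but the substance of the argument is identical.
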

\begin{proof} Using the canonical identification $E=E(\alpha,\beta)$ as sets, the proof boils down to finding commutation relations between
\[
\widetilde{n} = (\alpha^{-1} (n), 1), \; \widetilde{h} = (1, \beta^{-1} (h)) \in N \rtimes_{\phi} H
\]
for arbitrary $n\in N$, $h\in H$. The computation
\begin{align*}
\widetilde{h} \cdot \widetilde{n} = (1, \beta^{-1} (h)) \cdot (\alpha^{-1} (n), 1) &
= ({}^{\phi(\beta^{-1} (h))} \alpha^{-1}(n)), \beta^{-1} (h)) = \\
(\alpha^{-1} ({}^{\alpha \cdot \phi(\beta^{-1} (h)) \cdot  \alpha^{-1}}n), \beta^{-1} (h)) & 
= (\alpha^{-1} ({}^{\psi(h)} n), \beta^{-1} (h)) = \widetilde{{}^{\psi(h)}n} \cdot \widetilde{h}
\end{align*}
finishes the proof.
\end{proof}

\subsection{Split automorphisms}

A stronger result can be achieved under the additional assumption that $\au (N)$ is a semidirect product
\begin{equation} \label{split_aut}
 \au (N) = \inn (N) \rtimes \ou (N)   \, .
\end{equation}
While such a semidirect product decomposition is not unique, we fix it in the next theorem. It is essentially a version of Eilenberg-MacLane Theorem (Proposition~\ref{EMT}) for (virtual) isomorphism classes. See Theorem~\ref{thm1} for the notation.
\begin{thm} \label{thm2}
Suppose the automorphisms of the group $N$ are split as in~\eqref{split_aut}.
 Then the following statements hold. 
    \begin{enumerate}[label=(\roman*)] 
   \item Every coupling $\varpi$ admits a canonical lifting
   \[ \widehat{\varpi} : H \xrightarrow{\varpi} \ou (N) \hookrightarrow \au (N) \, .\]
   \item In particular, $\hom_0(H,\ou(N)) = \hom(H,\ou(N))$.
   \item The semidirect product $N \rtimes_{\widehat{\varpi}} H$ is a fixed point in $\sE_{\varpi}(N,H)$ under the action of the group $(\au (N) \times \aut (H))_{\varpi}$.
   \item The sets of (virtual) isomorphism classes can be described as disjoint unions over representatives of the orbits as
\begin{align*}
\sI(N,H) &= \coprod_{\varpi \in \hom(H,\ou(N))/\au (N)} \dfrac{H^2 (H, {}_{\varpi}Z)}{\au (N)_{\varpi}}  , 
\\
\sG(N,H) &= \coprod_{\varpi \in \hom(H,\ou(N))/(\au (N)\times \aut (H))} \dfrac{H^2 (H,  {}_{\varpi}Z)}{(\au (N) \times \aut (H))_{\varpi}} . 
\end{align*} 
   \end{enumerate}      
\end{thm}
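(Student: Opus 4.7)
Here is my plan for proving Theorem 2.

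\textbf{Parts (i) and (ii).} The splitting assumption~\eqref{split_aut} hands us a group-theoretic section $s: \ou(N) \hookrightarrow \au(N)$. The only reasonable definition is $\widehat{\varpi} \coloneqq s \circ \varpi$, and it is a homomorphism because both factors are. Once we have a homomorphism $H \to \au(N)$ lifting $\varpi$, the semidirect product $N \rtimes_{\widehat{\varpi}} H$ is an extension whose coupling equals $\varpi$, so $\sE_\varpi(N,H) \neq \emptyset$. Proposition~\ref{EMT}(i) then forces $[\widetilde{\varpi}] = 0$, proving (ii).

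\textbf{Part (iii).} This is the main technical step. Take $(\alpha,\beta) \in (\au(N)\times\aut(H))_\varpi$. By Lemma~\ref{action_on_sd}, the action sends the class of $E = N \rtimes_{\widehat{\varpi}} H$ to the class of $N \rtimes_{\psi} H$ with $\psi(h) = \alpha \cdot \widehat{\varpi}(\beta^{-1}h) \cdot \alpha^{-1}$. The plan is to show this new extension is equivalent (not just isomorphic) to the original, via Proposition~\ref{semidirpro}(3). Write $\alpha = \ad(a)\cdot s(\overline{\alpha})$ using the splitting. Because $s(\ou(N))$ is a subgroup of $\au(N)$, conjugation $s(\overline{\alpha}) \cdot s(\varpi(\beta^{-1}h)) \cdot s(\overline{\alpha})^{-1}$ stays in $s(\ou(N))$ and equals $s\bigl(\overline{\alpha}\,\varpi(\beta^{-1}h)\,\overline{\alpha}^{-1}\bigr) = s(\varpi(h)) = \widehat{\varpi}(h)$, using that $(\alpha,\beta)$ stabilises $\varpi$. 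Consequently
\[
\psi(h) \ = \ \ad(a) \cdot \widehat{\varpi}(h) \cdot \ad(a^{-1}) \ = \ \ad\bigl(a \cdot \widehat{\varpi}(h)(a^{-1})\bigr) \cdot \widehat{\varpi}(h).
\]
Setting $\alpha'(h) \coloneqq a \cdot \widehat{\varpi}(h)(a^{-1})$, a direct computation (using that $\widehat{\varpi}$ is a homomorphism) shows $\alpha' \in \Der_{\widehat{\varpi}}(H,N)$, and $\psi = \ad(\alpha')\cdot \widehat{\varpi}$. Proposition~\ref{semidirpro}(3) then gives $N\rtimes_\psi H \sim N\rtimes_{\widehat{\varpi}} H$ in $\sE_\varpi(N,H)$, so $[E]$ is fixed by $(\alpha,\beta)$.

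\textbf{Part (iv).} This is assembly. By (ii), the disjoint unions in Theorem~\ref{thm1} may now be taken over all of $\hom(H,\ou(N))$. By (iii), each class $[E_\varpi] \coloneqq [N\rtimes_{\widehat{\varpi}}H]$ is a fixed point of $(\au(N)\times\aut(H))_\varpi$ acting on $\sE_\varpi(N,H)$, so Lemma~\ref{eq_lem}(4) says the bijection $\Upsilon_{E_\varpi}: H^2(H,{}_\varpi Z) \xrightarrow{\sim} \sE_\varpi(N,H)$ is $(\au(N)\times\aut(H))_\varpi$-equivariant. Passing to quotients yields the formula for $\sG(N,H)$. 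For $\sI(N,H)$ one uses Lemma~\ref{eq_lem}(3) with $\au(N)_\varpi$ in place of the full stabiliser (a fortiori $[E_\varpi]$ is $\au(N)_\varpi$-fixed).

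\textbf{Main obstacle.} The nontrivial content is in Part (iii), specifically the reduction of the conjugate $\widehat{\sigma}\widehat{\varpi}\widehat{\sigma}^{-1}$ back to the lifted coupling $\widehat{\varpi}$. Without the splitting hypothesis, that conjugate would only agree with $\widehat{\varpi}(h)$ modulo $\inn(N)$, and one would be left with a cocycle-type obstruction; it is precisely the splitting that turns this obstruction into a coboundary $\ad(\alpha')$ with $\alpha'$ genuinely a derivation with values in $N$, not merely in $\inn(N)$. Verifying the derivation identity for $\alpha'(h) = a\cdot\widehat{\varpi}(h)(a^{-1})$ is the one calculation I would write out carefully.
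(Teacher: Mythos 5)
Your proposal is correct and follows essentially the same route as the paper: the lift $\widehat{\varpi}=s\circ\varpi$ gives (i)--(ii), and for (iii) you combine Lemma~\ref{action_on_sd} with the splitting to rewrite $\psi(h)=\ad\bigl(a\cdot{}^{\widehat{\varpi}(h)}(a^{-1})\bigr)\cdot\widehat{\varpi}(h)$ and recognise the principal derivation $h\mapsto a\cdot{}^{\widehat{\varpi}(h)}(a^{-1})\in\Der_{\widehat{\varpi}}(H,N)$, which is exactly the paper's lift of the inner derivation $\alpha_1\cdot{}^{\varpi(h)}(\alpha_1^{-1})$ via $\alpha_1=\ad(t)$, before invoking Proposition~\ref{semidirpro}(3); the assembly of (iv) from Theorem~\ref{thm1} and Lemma~\ref{eq_lem} also matches.
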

\begin{proof}
Statement (i) is obvious and it immediately implies Statement (ii).

To prove Statement (iii), pick $(\alpha,\beta)\in (\au (N) \times \aut (H))_{\varpi}$. Let $\phi=\widehat{\varpi}$. In light of the decomposition~\eqref{split_aut}, we can write
\[
\alpha = (\alpha_1,\alpha_2), \; \phi(h) = (1,\varpi (h)) \in \inn (N) \rtimes \ou (N) \, .
\]
Let $E=N\rtimes_{\phi}H$. By Lemma~\ref{action_on_sd}, $E(\alpha,\beta)=N\rtimes_{\psi}H$ where
\begin{align*}
\psi(h) = (\alpha_1,\alpha_2) \cdot (1, \varpi(\beta^{-1} (h))) \cdot (1,\alpha_2^{-1}) \cdot (\alpha_1^{-1}, 1) &
= (\alpha_1,\alpha_2 \varpi(\beta^{-1} (h))  \alpha_2^{-1}) \cdot (\alpha_1^{-1}, 1)
\\
= (\alpha_1 \cdot {}^{\alpha_2 \varpi(\beta^{-1} (h))  \alpha_2^{-1}}(\alpha_1^{-1}),\alpha_2 \varpi(\beta^{-1} (h))  \alpha_2^{-1}) \stackrel{\ast}{=}
& 
\; (\alpha_1 \cdot {}^{\varpi(h)}(\alpha_1^{-1}), \varpi(h)) \, .
\end{align*}
The last equality, marked by $\ast$, follows from $(\alpha,\beta)$ preserving the coupling. The function that appears in the first component
\[
H \rightarrow \inn (N), \quad h \mapsto \alpha_1 \cdot {}^{\varpi(h)}(\alpha_1^{-1})
\]
belongs to $\Der_{\, \overline{\phi}\, } (H, \inn (N))$. Moreover, it is an inner derivation, so it can be easily lifted. Let $\alpha_1 = \ad (t)$ for some $t\in N$. Then
\[
\gamma \coloneqq \Big( h \mapsto t \cdot {}^{\varpi(h)}(t^{-1}) \Big) \in \Der_\phi (H, N)
\]
is the lift such that $\psi = \gamma \cdot \phi$. Now Statement~(iii) follows from Proposition~\ref{semidirpro}. 

Finally, Statement (iv) easily follows from the first three statements, Lemma~\ref{eq_lem} and Theorem~\ref{thm1}.
\end{proof}

\section{Reductive groups}

Throughout this section, $H$ is a finite group, while $N$ is the group of points of a connected reductive group over an algebraically closed field $\bK$ of characteristic $p\geq 0$. Consequently, $\au (N)$ is the group of automorphisms of $N$ as an algebraic group.

Later, we will also have $G$, the group of $\bK$-points of a reductive group, which is not necessarily connected. The group $G$ is related to $N$ and $H$ by $G_0\cong N$ and $\pi_0 (G)=H$. In other words, it is an extension in $\sE(N,H)$.

\subsection{Classification of connected reductive groups}

To begin, we briefly recall the notion of a root datum and the classification of connected reductive groups over an algebraically closed field. For more details on this see, for example, \cite{Sp}.

Let $X$ and $X^\vee$ be free abelian groups of finite rank, with a fixed pairing $\langle -,- \rangle: X \times X^\vee \rightarrow \mathbb{Z}$. Furthermore, let $\Phi$ and $\Phi^\vee$ be finite subsets of $X$ and $X^\vee$, respectively, with a fixed bijection $\Phi \rightarrow \Phi^\vee; \alpha \mapsto \alpha^\vee$. For each $\alpha \in \Phi$, define endomorphisms $s_\alpha \in \text{End}(X)$, $s_{\alpha^\vee} \in \text{End}(X^\vee)$ via $s_\alpha(x) = x - \langle x, \alpha^\vee \rangle\alpha$ for all $x \in X$ and $s_{\alpha^\vee}(y) = y - \langle \alpha, y \rangle\alpha^\vee$ for all $y \in X^\vee$. Then, a \textit{root datum} is a quadruple $\Psi = (X,\Phi, X^\vee, \Phi^\vee)$ of objects as above that satisfy the following two conditions:
\begin{itemize}
    \item For all $\alpha \in \Phi$, $\langle \alpha, \alpha^\vee \rangle = 2$ .
    \item For all $\alpha \in \Phi$, $s_\alpha(\Phi) \subset \Phi$ and $s_{\alpha^\vee}(\Phi^\vee) \subset \Phi^\vee$.
\end{itemize}
If $\Phi$ is non-empty, then we call it the \textit{root system} of $\Psi$. Note that this is a root system in the conventional sense \cite[Section 1.3]{Sp}. In this case, we also call $\Phi^\vee$ the \textit{coroot system}. Additionally, if for all $\alpha \in \Phi$, the only multiples of $\alpha$ in $\Phi$ are $\pm \alpha$, then we say that $\Phi$ is  \textit{reduced}.

Let $N$ be a connected reductive group over an algebraically closed field. Fix a maximal torus $T$ and denote by $X:= X(T) = \Hom(T,\mathbb{G}_m)$ the group of rational characters of $T$. Furthermore, let $X^\vee = \Hom(\mathbb{G}_m,T)$ be the group of cocharacters. For any $\alpha \in X$, $\beta^\vee \in X^\vee$ the composition $\alpha \circ \beta^\vee \in \Hom(\mathbb{G}_m,\mathbb{G}_m)$ defines a pairing $\langle -,- \rangle$.
Let $\Phi := \Phi(N,T)$ be the root system of $(N,T)$. We define the coroots $\Phi^\vee$ as follows: for each $\alpha \in \Phi$, let $T_\alpha \subset T$ be the connected component of $\ker(\alpha)$. Then, there exists a unique $\alpha^\vee \in X^\vee(T)$ such that $\langle \alpha, \alpha^\vee \rangle = 2$ and $T = \im(\alpha^\vee)T_\alpha$. We define $\Phi^\vee = \{\alpha^\vee\}_{\alpha \in \Phi}$.

In this way, one constructs a (unique) root datum from a connected reductive algebraic group. Write $\psi(N,T)$ for this root datum. In fact, the converse is also true.

\begin{thm} \cite[Theorem 2.9]{Sp} Let $\Psi = (X,\Phi, X^\vee, \Phi^\vee)$ be a root datum with a reduced root system $\Phi$. Then there exists a connected reductive group $N$ and a maximal torus $T \subset N$ such that $\Psi = \psi(N, T),$ the root datum of $(N, T)$. Moreover, the pair $(N, T)$ is unique up to isomorphism.  
\end{thm}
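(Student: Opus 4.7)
The plan is to split the statement into the existence and uniqueness parts, which have rather different flavours. Uniqueness (often called the Isomorphism Theorem) says that an abstract isomorphism $f : \psi(N,T) \xrightarrow{\sim} \psi(N',T')$ of root data lifts to an algebraic isomorphism $N \to N'$ whose restriction to $T$ is dual to $f_X : X \to X'$. Existence is the harder half: one must build a connected reductive group from purely combinatorial data.

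For uniqueness, I would first identify $T \cong \Hom(X, \bG_m)$, which turns $f_X$ into an isomorphism $T \to T'$. For each root $\alpha \in \Phi$ there is a unique one-dimensional unipotent subgroup $U_\alpha \subset N$ normalised by $T$ on which $T$ acts through $\alpha$, and the parametrisation $U_\alpha \cong \bG_a$ is unique up to a scalar in $\bK^\times$. I would then invoke Steinberg's presentation, which exhibits $N$ as generated by $T$ and the $U_\alpha$ subject to Chevalley-type commutator relations whose structure constants depend on $\Phi$ only up to signs; by adjusting the parametrisations one obtains compatible isomorphisms $U_\alpha \xrightarrow{\sim} U'_{f(\alpha)}$ that extend to a group homomorphism $N \to N'$. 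The Bruhat decomposition $N = \bigsqcup_{w\in W} U^- T w U$, matched with the analogous one for $N'$, shows bijectivity, and examining the big open cell $U^- T U$ upgrades this to an isomorphism of algebraic varieties.

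For existence, I would reduce to the simple simply connected case. Any root datum decomposes as a product of an irreducible semisimple piece (a simple root system together with a lattice $X$ satisfying $Q(\Phi) \subseteq X \subseteq P(\Phi)$) and a central torus, so it suffices to construct a simply connected simple group for each Dynkin diagram and then pass to the correct isogeny quotient. For the simply connected case I would use Chevalley's construction: take the complex semisimple Lie algebra $\mathfrak{g}_\bC$ of the prescribed type, fix a Chevalley basis with integral structure constants, form Kostant's admissible $\bZ$-form of the universal enveloping algebra, and use it to produce a smooth affine group scheme over $\bZ$, which I then base-change to $\bK$. The intermediate isogeny form is obtained by quotienting by the finite central subgroup of the maximal torus corresponding to $X/Q(\Phi) \subset P(\Phi)/Q(\Phi)$, and the construction is completed by adjoining the central torus of the original datum.

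The main obstacle is the Chevalley construction in positive characteristic: the naive exponential $\exp(t X_\alpha)$ does not make formal sense when $p$ divides a factorial appearing in the series, so one must work with Kostant's divided-power $\bZ$-form and separately verify that the resulting scheme is smooth, reductive after base change, and has the predicted root datum over every field. The Demazure--Grothendieck treatment in SGA3 avoids the explicit exponentiation altogether by producing split reductive group schemes over $\mathrm{Spec}(\bZ)$ directly from root data, but at the cost of substantially more foundational machinery.
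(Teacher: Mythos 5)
The paper offers no proof of this statement: it is quoted directly from Springer \cite[Theorem 2.9]{Sp} as background for the classification in Section 2, so there is no internal argument to compare yours against. Your outline is the standard two-part proof from the literature --- the Isomorphism Theorem for uniqueness (rigidify the root subgroups $U_\alpha$, use the Steinberg/Chevalley commutator presentation and the Bruhat decomposition to extend a torus isomorphism dual to $f_X$) and the Existence Theorem via the Chevalley construction or SGA3 --- and at the level of a sketch it is the correct route.

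One reduction in your existence argument is stated too loosely. A reduced root datum with $\Phi\neq\emptyset$ and $\operatorname{rk}(X)>\operatorname{rk}(\mathbb{Z}\Phi)$ does \emph{not} in general decompose as a direct product of a semisimple datum and a toral one: the lattice $X$ need not split as $X_{ss}\oplus X_{tor}$. The root datum of $GL_n$ is the standard counterexample --- $GL_n$ is not $SL_n\times\mathbb{G}_m$. The correct reduction realises the sought group as a quotient $(T'\times N_{sc})/\mu$ of the product of a torus and the simply connected group of the derived root system by a finite central subgroup $\mu$ read off from the inclusions $\mathbb{Z}\Phi\subseteq X$ and $X^\vee\supseteq\mathbb{Z}\Phi^\vee$, and one must then verify that the resulting group has exactly the prescribed datum. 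This is routine but it is genuinely more than ``pass to the isogeny quotient and adjoin the central torus''; as written, that step of your sketch would not produce $GL_n$ from its root datum. The rest of the outline (Chevalley $\mathbb{Z}$-forms, Kostant's divided powers in positive characteristic, or the SGA3 construction of split reductive group schemes over $\mathbb{Z}$) is accurate.
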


Moreover, for each dimension, the number of such groups is always finite.

\begin{prop} \label{prop finite}
Fix a dimension $n$. Up to isomorphism, there are finitely many connected reductive groups over $\bK$ of dimension $n$. This number does not depend on the field $\bK$ (as soon as $\bK$ is algebraically closed).    
\end{prop}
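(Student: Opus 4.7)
The plan is to leverage the classification by root data just recalled: over an algebraically closed field, isomorphism classes of connected reductive groups are in bijection with isomorphism classes of root data $\Psi = (X, \Phi, X^\vee, \Phi^\vee)$ with reduced root system, and this parametrisation makes no reference to $\bK$. The independence of $\bK$ in the proposition is therefore automatic once finiteness is established; the substantive task is to show that only finitely many root data of a given dimension $n$ exist.

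First I would recover the dimension from the root datum. If $(N,T)$ realises $\Psi$, the Lie algebra decomposes as $\mathfrak{n} = \mathfrak{t} \oplus \bigoplus_{\alpha \in \Phi} \mathfrak{n}_\alpha$ with one-dimensional root spaces, so $\dim(N) = \operatorname{rank}(X) + |\Phi|$. Fixing $\dim(N) = n$ therefore forces both $\operatorname{rank}(X) \leq n$ and $|\Phi| \leq n$, with the two quantities summing to $n$.

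Next I would stratify by $r := \operatorname{rank}(X) \in \{0, 1, \dots, n\}$, with $|\Phi| = n - r$. For each such $r$ the lattice $X$ is abstractly $\mathbb{Z}^r$ and $\Phi$ is a finite subset of bounded size inside it. By the classical Dynkin classification there are only finitely many isomorphism types of reduced abstract root systems of rank at most $r$. For each such abstract root system, the lattice $X$ must sit (inside the $\mathbb{Q}$-span of $\Phi$) between the root lattice $Q$ and the weight lattice $P$, and $|P/Q|$ is finite, so only finitely many intermediate lattices occur in the semisimple direction. The complementary direction in $X$ contributes a central-torus summand of fixed rank $r - \operatorname{rank}(Q)$, which, being a free abelian group, is determined up to isomorphism by that rank alone. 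Finally, $X^\vee$ and $\Phi^\vee$ are determined by $X$ and $\Phi$ via the perfect pairing together with the reflection formulas, so nothing further need be counted.

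The main obstacle I anticipate is administrative rather than conceptual: pinning down precisely what constitutes an isomorphism of root data (a pairing-preserving lattice isomorphism carrying $\Phi$ to $\Phi'$ compatibly with the coroot bijection) and checking that passing to $\operatorname{GL}_r(\mathbb{Z})$-orbits on the configurations above does not spoil finiteness. Since each stratum is already a finite combinatorial set, taking orbits under any group action preserves finiteness, and summing over the finitely many values of $r$ closes the argument. The very same count is obtained over every algebraically closed field, yielding the second assertion.
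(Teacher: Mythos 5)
Your route is genuinely different from the paper's: the paper realises every connected reductive group of dimension $n$ as a quotient $N_{sc}/A$ of one of finitely many ``simply connected'' groups (a torus times a simply connected semisimple group) by a finite central subgroup, and then bounds the possible $A$; you instead count root data directly. The reduction to root data, the formula $\dim(N)=\operatorname{rank}(X)+|\Phi|$, the stratification by $r=\operatorname{rank}(X)$, and the finiteness of reduced root systems of rank at most $r$ are all correct, and the field-independence is indeed automatic once the count is purely combinatorial.

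The gap is in the reassembly of $X$ from its semisimple and central directions. A root datum is \emph{not} determined by the intermediate lattice $Q\subseteq X\cap\mathbb{Q}\Phi\subseteq P$ together with the rank of the complement: setting $X_0=\{x\in X:\langle x,\alpha^\vee\rangle=0\ \text{for all}\ \alpha\in\Phi\}$, the sublattice $(X\cap\mathbb{Q}\Phi)\oplus X_0$ is in general a \emph{proper} finite-index sublattice of $X$, and the gluing matters. Concretely, $\mathrm{GL}_2$ and $\mathrm{PGL}_2\times\mathbb{G}_m$ both have $r=2$, root system $A_1$, $X\cap\mathbb{Q}\Phi=Q$ and central rank $1$, yet they are not isomorphic; your parametrisation conflates them, so finiteness does not follow from counting your parameters. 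The step is repairable: with $d=|P/Q|$ one checks that $Q\oplus X_0\subseteq X\subseteq P\oplus\tfrac{1}{d}X_0$ (the projection of any $x\in X$ to $\mathbb{Q}\Phi$ lies in $P$, and $d$ times the complementary projection lies in $X\cap(X_0\otimes\mathbb{Q})=X_0$), so $X$ is one of the finitely many lattices intermediate in a finite-index pair, after which $X^\vee$ and $\Phi^\vee$ are determined as you say. With that emendation the argument closes.
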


\begin{proof}
Fix a dimension $n$. Every connected algebraic group $N$ of this dimension is isomorphic to $N_{sc}/A$, where $N_{sc}$ is a simply connected algebraic group of dimension $n$ (of which there are finitely many), and $A$ is a finite subgroup of $Z(N_{sc}) = B \times T_m$, where $B$ is a finite group, and $T_m$ is a torus. 

Observe, that if we have $A < T_m < Z(N_{sc})$ then $N_{sc}/A \cong N_{sc}$. More generally, given an $A < Z(N_{sc})$, there exists $A' < B \times t_{|A|}(T_m) \subset N_{sc}$, where $t_{|A|}(T_m)$ denotes the $|A|$-torsion in $T_m$, such that $N_{sc}/A \cong N_{sc}/A'$. However, there are finitely many subgroups of $B \times t_{|A|}(T_m)$ (as it is finite), and therefore there are finitely many connected algebraic groups of dimension $n$.
\end{proof}

Note that 
different algebraic groups may be isomorphic as abstract groups.
For example, the quotient map $\SLp \to \PGLp$ in characteristic $p$ is an isomorphism of abstract groups between non-isomorphic algebraic groups. \\

Let $\Psi = (X,\Phi, X^\vee, \Phi^\vee)$ be a root datum, and $\Delta \subset \Phi$ is a basis of $\Phi$, then $\Delta^\vee = \{\alpha^\vee : \alpha \in \Delta\}$ is a basis of $\Phi^\vee$. A \textit{based root datum} is a sextuple $\Psi_0 = (X,\Phi,\Delta, X^\vee, \Phi^\vee,\Delta^\vee)$, moreover since $\Delta$ and $\Delta^\vee$ determine $\Phi$ and $\Phi^\vee$ completely, we may also view the based root datum as the quadruple $\Psi_0 = (X,\Delta, X^\vee, \Delta^\vee)$. It follows that if $\Psi_0$ is a based root datum, then there exists a triple $(N, B, T)$, where $B$ is a Borel subgroup of $N$ containing $T$, such that $\psi_0(N, B, T) = \Psi_0$. Let $\alpha \in \Delta$ be a simple root. Associated to each $\alpha$ is a subgroup $N_\alpha \subset N$ isomorphic to $SL_2$, we define $U_\alpha = \text{Im}(x_\alpha) \subset N_\alpha$ where $x_\alpha: \mathbb{G}_a \to N_\alpha$ is the unique homomorphism of algebraic groups satisfying $t x_\alpha(k) t^{-1} = x_\alpha(t^\alpha k)$ for $t \in T$ and $k \in \mathbb{K}$. For each $\alpha \in \Delta$ we choose a $u_\alpha \in U_\alpha$.

\begin{prop} \cite[Proposition 2.13 and Corollary 2.14]{Sp}
$\aut\big(\psi_0(N,B,T)\big)$ is isomorphic to the group $\aut(N, B, T, \{u_\alpha\}_{\alpha \in \Delta})$ of automorphisms of $N$ which stabilise $B, T$ and the set of $u_\alpha$. Furthermore,
$$ 1 \to \inn(N) \to \au (N) \to \aut(\psi_0(N)) \to 1 $$
is a split short exact sequence.
\end{prop}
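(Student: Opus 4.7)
The plan is to deduce both statements from the fundamental Isomorphism Theorem for reductive groups: any isomorphism of based root data lifts uniquely to an isomorphism of the underlying reductive groups that preserves the pinning $(B, T, \{u_\alpha\}_{\alpha \in \Delta})$. I would take this theorem (due ultimately to Chevalley and Demazure) as a black box and first establish the isomorphism $\aut(\psi_0(N,B,T)) \cong \aut(N, B, T, \{u_\alpha\})$, then leverage it to construct a splitting of the automorphism sequence.

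For the first isomorphism, the natural map $\aut(N, B, T, \{u_\alpha\}) \to \aut(\psi_0(N,B,T))$ sends $\phi$ to the pair $(\phi|_T^\ast, \phi|_T^\vee)$ acting on $X = X(T)$ and $X^\vee$. This map is well-defined into automorphisms of the \emph{based} root datum because $\phi$ permutes the $U_\alpha$ via the permutation it induces on $\{u_\alpha\}$, hence permutes $\Delta$. Injectivity: if $\phi$ induces the identity on $X$, then $\phi|_T = \mathrm{Id}_T$ because characters separate points of a torus; combined with $\phi(u_\alpha) = u_\alpha$ and the defining relation $t x_\alpha(k) t^{-1} = x_\alpha(t^\alpha k)$, we conclude $\phi|_{U_\alpha} = \mathrm{Id}$ for every $\alpha \in \Delta$; since $N$ is generated by $T$ together with the root subgroups $U_{\pm\alpha}$ for simple $\alpha$, this forces $\phi = \mathrm{Id}$. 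Surjectivity is precisely the existence part of the Isomorphism Theorem.

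For the exact sequence, I would define $\au(N) \to \aut(\psi_0(N))$ as follows. Given $\phi \in \au(N)$, the pair $(\phi(B), \phi(T))$ is conjugate to $(B, T)$ by some $g \in N$, and $g$ may be further adjusted by an element of $T$ so that $\ad(g) \circ \phi$ also stabilises $\{u_\alpha\}$, using that $T$ acts transitively on nontrivial elements of each $U_\alpha$ via $\alpha$. The residual ambiguity in $g$ lies in $Z(N)$, which acts trivially on $\psi_0$, so the induced automorphism of $\psi_0$ is well-defined. The map factors through $\au(N)/\inn(N)$ by construction. Its kernel consists of those $\phi$ whose normalised representative $\ad(g)\circ\phi \in \aut(N,B,T,\{u_\alpha\})$ induces the identity on $\psi_0$; by the uniqueness part of the Isomorphism Theorem this representative must itself be the identity, whence $\phi = \ad(g^{-1}) \in \inn(N)$. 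Surjectivity follows immediately from the surjectivity established in the first isomorphism. Finally, the composition
\[
\aut(\psi_0(N)) \xrightarrow{\;\cong\;} \aut(N, B, T, \{u_\alpha\}) \hookrightarrow \au(N)
\]
is a set-theoretic section of the quotient and, by uniqueness in the Isomorphism Theorem, a group homomorphism, giving the desired splitting.

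The genuine obstacle is that the argument rests on the Isomorphism Theorem, whose own proof requires either delicate bookkeeping of Chevalley generators and their relations or the scheme-theoretic theory of pinnings. Granting this result, the main care in writing up is the bookkeeping of the two-step adjustment (first conjugate into $(B,T)$, then by $T$ into $\{u_\alpha\}$) and the verification that the remaining ambiguity is central, which is where one uses $N_N(T) \cap B = T$ and $\bigcap_{\alpha \in \Delta} \ker(\alpha) \cap T = Z(N) \cap T$.
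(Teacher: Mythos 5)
The paper offers no proof of this proposition: it is quoted verbatim from Springer \cite[Prop.\ 2.13, Cor.\ 2.14]{Sp}, so there is nothing internal to compare against. Your argument is, in substance, the standard one (and essentially Springer's own): take the Isomorphism Theorem for pinned reductive groups as given, identify $\aut(\psi_0(N,B,T))$ with $\aut(N,B,T,\{u_\alpha\})$, and obtain the splitting by composing the inverse of that identification with the inclusion into $\au(N)$. The normalisation argument for the map $\au(N)\to\aut(\psi_0(N))$ is correct, including the key observations that the ambiguity after conjugating into $(B,T)$ lies in $N_N(B,T)=T$, and that the residual ambiguity after adjusting the pinning lies in $\bigcap_{\alpha\in\Delta}\ker(\alpha)=Z(N)$, which acts trivially.

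One step deserves repair. In your injectivity argument you conclude $\phi=\mathrm{Id}$ from $\phi|_T=\mathrm{Id}$ and $\phi|_{U_\alpha}=\mathrm{Id}$ for $\alpha\in\Delta$, on the grounds that $N$ is generated by $T$ and the $U_{\pm\alpha}$; but the pinning contains no $u_{-\alpha}$, so you have not yet shown that $\phi$ fixes $U_{-\alpha}$ pointwise. This is fixable: either note that within each rank-one subgroup $N_\alpha$ an automorphism fixing $T\cap N_\alpha$ and $U_\alpha$ pointwise also fixes $U_{-\alpha}$ pointwise, or simply invoke the uniqueness clause of the Isomorphism Theorem (which you already assume for surjectivity) to get injectivity for free. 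With that emendation the proof is complete, modulo the black box you declare.
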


The following result is also known.

\begin{prop} \cite[Equation 3.1e]{GV} \label{ouN_describe}
$\ou (N)$ is a finite index subgroup in $\GL \times S$ where $S$ is a finite group.
\end{prop}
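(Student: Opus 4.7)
The plan is to exhibit an explicit embedding $\ou(N) \hookrightarrow S \times \mathrm{GL}_n(\bZ)$ via the canonical decomposition of $N$ into its semisimple part and central torus. Recall that any connected reductive group is an almost-direct product $N = N_{ss} \cdot T_c$, where $N_{ss} = [N,N]$ is semisimple and $T_c = Z(N)^0$ is the central torus, with finite intersection $F \coloneqq N_{ss} \cap T_c \subset Z(N_{ss})$; moreover, $Z(N) = Z(N_{ss}) \cdot T_c$. Since $N_{ss}$ and $T_c$ are canonical (hence characteristic) subgroups, restriction defines a map
\[
\au(N) \hookrightarrow \au(N_{ss}) \times \au(T_c),
\]
injective because $N_{ss}$ and $T_c$ together generate $N$.

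Next I would pass to outer automorphisms. For $n = n_{ss} t_c \in N$, centrality of $T_c$ forces $\ad(n)$ to act trivially on $T_c$ and as $\ad(n_{ss})$ on $N_{ss}$; together with $Z(N) = Z(N_{ss}) \cdot T_c$ this yields $\inn(N) \cong \inn(N_{ss})$ sitting inside $\inn(N_{ss}) \times \{1\}$. Hence the map above descends to
\[
\ou(N) \hookrightarrow \ou(N_{ss}) \times \au(T_c) = S \times \mathrm{GL}_n(\bZ),
\]
where $n = \dim T_c$ and $S \coloneqq \ou(N_{ss})$. By the previous proposition applied to $N_{ss}$, $S \cong \aut(\psi_0(N_{ss}))$, which permutes the finite set $\Delta$ of simple roots; since $\bZ\Delta$ has finite index in the character lattice of the semisimple group $N_{ss}$, an automorphism fixing each simple root is trivial on all of $X$, so $S$ embeds into $\mathrm{Sym}(\Delta)$ and is finite.

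For the finite-index assertion, I would characterise the image precisely: a pair $(\bar\phi_{ss}, \phi_c)$ lifts to an automorphism of $N$ if and only if $\bar\phi_{ss}$ and $\phi_c$ induce the same automorphism of the finite group $F$, a condition well defined on outer classes because inner automorphisms of $N_{ss}$ act trivially on $Z(N_{ss}) \supset F$. The action of $\au(T_c) = \mathrm{GL}_n(\bZ)$ on $F$ factors through the finite quotient $\mathrm{GL}_n(\bZ/m)$, where $m$ is the exponent of $F$, so the principal congruence subgroup $\Gamma(m) \subset \mathrm{GL}_n(\bZ)$ acts trivially on $F$. Therefore $\{1\} \times \Gamma(m)$ sits inside the image of $\ou(N)$; since this subgroup already has finite index in $S \times \mathrm{GL}_n(\bZ)$, so does the image.

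The hard part will really just be the careful bookkeeping around the foundational structure results -- the almost-direct product decomposition and the identity $Z(N) = Z(N_{ss}) \cdot T_c$ -- which I would cite from a standard reference on reductive groups. Once those are in hand, the lifting criterion on $F$ and the congruence subgroup argument are both routine.
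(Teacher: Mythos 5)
The paper offers no proof of this proposition, only the citation to [GV], so your argument genuinely fills a gap; the strategy via the almost-direct product $N=N_{ss}\cdot T_c$ is the standard one and the main steps are all correct: the characteristic subgroups give an injection $\ou(N)\hookrightarrow \ou(N_{ss})\times\au(T_c)$ (your identification of the image of $\inn(N)$ with $\inn(N_{ss})\times\{1\}$ is exactly what makes the induced map on outer classes injective), the finiteness of $S=\ou(N_{ss})\cong\aut(\psi_0(N_{ss}))\hookrightarrow\mathrm{Sym}(\Delta)$ is right, and the congruence-subgroup trick is the right way to get finite index.

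There is one genuine, though repairable, gap in positive characteristic: the intersection $F=N_{ss}\cap T_c$ must be taken scheme-theoretically, not on $\bK$-points. For $N=\mathrm{GL}_p$ in characteristic $p$ one has $F=\mu_p$, so $F(\bK)=1$ and your lifting criterion, read on points, would assert that every pair lifts; in fact $(\mathrm{id},\,t\mapsto t^{-1})$ does not lift to an algebraic automorphism of $\mathrm{GL}_p$ for $p>2$ (the putative automorphism would have to act on the character lattice $\bZ^p$ as the identity on the root lattice and as $-1$ on $(1,\dots,1)$, which is not integral). Concretely, with your choice $m=\exp F(\bK)=1$ the claimed containment $\{1\}\times\Gamma(m)\subseteq\mathrm{im}\,(\ou(N))$ fails in this example. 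The fix is cheap: take $m$ to be the exponent of the character group $X(F)$ of the finite diagonalizable group scheme $F$ (equivalently, the exponent of $N_{ss}\cap T_c$ computed over $\bC$ from the root datum, which is characteristic-independent); then $\Gamma(m)$ fixes $F$ as a group scheme, the pairs $(\mathrm{id},\phi_c)$ with $\phi_c\in\Gamma(m)$ do descend to algebraic automorphisms of $N$, and the finite-index conclusion follows exactly as you argue. In characteristic zero your proof is complete as written.
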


To conclude the section, we describe the centre of $N$. \cite[II.1.5]{J} Let $(X,\Phi)$ denote the rational characters and roots of $N$, respectively. Then we define the centre of $N$ as 
\[Z(N) := \bigcap_{\alpha \in \Phi} \ker(\alpha). \]
We may represent this as follows: let $\Lambda$ denote the abelian group $X/\mathbb{Z}[\Phi]$ \textit{written multiplicatively}. We give $k[\Lambda]$ the structure of a commutative and cocommutative Hopf algebra by equipping it with comultiplication $\Delta(g) = g \otimes g$, counit $\epsilon(g) = 1$ and antipode $S(g) = g^{-1}$, for all $g \in \Lambda$. Then $Z(N) = \text{Sp}_k(k[\Lambda])$, which for any (commutative) $k-$algebra $A$ is equivalent to $\Hom_{\text{Grp}}(\Lambda, A^\times)$, with multiplication given by $(\phi \cdot \psi)(g) = \phi(g)\psi(g)$ for all $\phi, \psi \in \Hom_{\text{Grp}}(\Lambda, A^\times)$.

\subsection{Classification of disconnected reductive groups}
Now we have all the ingredients to classify disconnected reductive groups. A reductive group $G$ has finitely many components so that the group $\pi_0 (G)$ is finite. An isomorphism $G\rightarrow G'$ of reductive groups restricts to isomorphisms
\[
G_0\xrightarrow{\cong} G'_0, \quad \pi_0(G)\xrightarrow{\cong} \pi_0(G')
\]
of identity components and component groups. Hence, the task is to understand the sets $\sE(N, H)$ for all finite groups $H$ and connected reductive groups $N$. Let us start with possible couplings.
\begin{defn}
We say that a group $K$ has finitely many homomorphisms from finite groups (property $\mF$ for short) if for any finite group $H$ the set $\hom (H, K)$ has finitely many $K$-orbits under the conjugation by $K$.
\end{defn}

It is easy to come up with both $\mF$ and non-$\mF$ finitely presented groups, so we find the next question interesting.

\begin{prob}
Is the property $\mF$ decidable on the class of finitely presented groups?  
\end{prob}

We give a criterion, sufficient for our aims. We write $\mF$ for the class of groups with property $\mF$.

\begin{prop} The following statements hold for the class $\mF$.
\begin{enumerate}
    \item ${\mathfrak F}$ is closed under finite index subgroups.
        \item ${\mathfrak F}$ is closed under finite direct products.
    \item ${\mathfrak F}$ contains $\GL$ for any $n$.
    \item ${\mathfrak F}$ contains $\au (N)$ for any connected reductive group.
\end{enumerate}    
\end{prop}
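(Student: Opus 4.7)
The first two items are essentially immediate. For (2), the canonical identification $\hom(H, K_1 \times K_2) \cong \hom(H, K_1) \times \hom(H, K_2)$ intertwines the componentwise $K_1 \times K_2$-conjugation action, so the orbit count factors and is finite whenever both factors are. For (3), I would invoke the Jordan--Zassenhaus theorem: $\GL$-conjugacy classes of homomorphisms $H \to \GL$ correspond bijectively to isomorphism classes of $\bZ$-free $\bZ[H]$-modules of rank $n$, and these are known to be finite in number for each finite group $H$.

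For (1), the plan is a double-coset fibre count. Composition with the inclusion $L \hookrightarrow K$ induces a well-defined map $\hom(H,L)/L \to \hom(H,K)/K$, whose target is finite by hypothesis, so it suffices to bound each fibre. For a representative $\varphi \in \hom(H,L)$, the $K$-orbit $K\cdot\varphi$ is parameterised by $K/C_K(\varphi)$ via $k \mapsto k\varphi k^{-1}$, and the conjugates taking values in $L$ form a union of $(L, C_K(\varphi))$-double cosets. Hence the fibre of $[\varphi]$ injects into $L\backslash K/C_K(\varphi)$, whose cardinality is at most $[K:L]$, and the result follows by multiplying two finite bounds.

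For (4), the strategy is to feed Proposition \ref{ouN_describe} into items (1)--(3). Since $\ou(N)$ is a finite-index subgroup of $\GL \times S$ for some finite $S$, we obtain $\ou(N) \in \mF$. Using the split short exact sequence $1 \to \inn(N) \to \au(N) \to \ou(N) \to 1$, any $\rho\colon H \to \au(N)$ has a projection $\bar\rho\colon H \to \ou(N)$ lying in one of finitely many $\ou(N)$-conjugacy classes. Fixing $\bar\rho$, the remaining data is a $1$-cocycle $H \to \inn(N) \cong N/Z$ with $\bar\rho$-twisted action, and the $\au(N)$-orbits of lifts are controlled by the cohomology set $H^1(H, {}_{\bar\rho}(N/Z))$ modulo the residual action of the centraliser $C_{\ou(N)}(\bar\rho)$.

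The main obstacle is therefore the finiteness of $H^1(H, {}_{\bar\rho}(N/Z))$ -- equivalently, the statement that the finite subgroups of $N/Z$ of a fixed ``shape'' fall into only finitely many conjugacy classes. In characteristic zero this is classical Borel--Serre-style rigidity for finite subgroups of reductive groups; in positive characteristic additional care is needed, since abstract elementary abelian $p$-subgroups inside unipotent radicals can produce continuous moduli. Handling this last ingredient cleanly, presumably by exploiting the algebraic-group structure of $N/Z$ rather than its bare underlying group, is where the substantial work of the proof will lie.
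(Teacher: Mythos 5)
Your treatment of items (1)--(3) is correct and is essentially the paper's: the paper dismisses (1) and (2) as obvious and cites Jordan--Zassenhaus for (3). Your double-coset bound for (1) --- the fibre of $\hom(H,L)/L\to\hom(H,K)/K$ over $[\varphi]$ injects into $L\backslash K/C_K(\varphi)$, which has at most $[K:L]$ elements --- is a clean way to make "obvious" precise.

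The real issue is item (4), and you have put your finger on a genuine gap --- one that is present in the paper as well. The paper's entire proof of (4) is the sentence that it "follows from Proposition \ref{ouN_describe}", which together with (1)--(3) only yields $\ou(N)\in\mF$; nothing is said about passing from $\ou(N)$ to $\au(N)$. Your analysis of that passage is the right one (fix the composite $\bar\rho\colon H\to\ou(N)$, then classify lifts by the twisted nonabelian $H^1(H,{}_{\bar\rho}(N/Z))$ modulo the residual centraliser action), and your worry about positive characteristic is not a technicality: statement (4) as written is \emph{false} when $\Char\bK=p>0$. Take $N=SL_2(\overline{\bF}_p)$, so that $\au(N)=\inn(N)=PGL_2(\bK)$, and $H=C_p\times C_p$. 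The homomorphisms sending the two generators to the images of $\bigl(\begin{smallmatrix}1&1\\0&1\end{smallmatrix}\bigr)$ and $\bigl(\begin{smallmatrix}1&t\\0&1\end{smallmatrix}\bigr)$, $t\in\bK$, are pairwise non-conjugate: the centraliser in $PGL_2$ of the first element is the unipotent root group, which fixes every element of the second form. Hence $\hom(C_p^2,\au(SL_2))$ has infinitely many orbits. In characteristic zero (or for $|H|$ prime to $p$) the finiteness you need does hold by Weil-type rigidity --- $Z^1(H,N/Z)$ is an affine variety on which $N/Z$ acts with open orbits because $H^1(H,\mathrm{Lie}(N/Z))=0$ --- so your sketch can be completed there. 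To salvage (4) in general one must either restrict the characteristic or replace $\au(N)$ by $\ou(N)$ in the statement; the latter is harmless for the paper, since the action of $\au(N)$ on couplings factors through $\ou(N)$ and all later uses (Corollary \ref{groups_g0ss}, Theorem \ref{gr_finite_thm}) only index couplings by orbits on $\hom(H,\ou(N))$.
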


\begin{proof}
The first two statements are obvious. The third statement is the Jordan-Zassenhaus Theorem. With all this knowledge, the last statement follows from Proposition~\ref{ouN_describe}.
\end{proof}

If $N$ is semisimple, $Z=Z(N)$ is finite. We can combine this proposition with Theorem~\ref{thm2}, replacing $\au(N)$ with $\ou(N)$ for future convenience.

\begin{cor} \label{groups_g0ss}
Suppose $N$ is a connected semisimple group and $H$ is a finite group.
The set $\sG(N,H)$ of isomorphism classes of semisimple groups $G$ with $G_0\cong N$ and $\pi_0(G)\cong H$ is finite and can be described as
\[
\sG(N,H) = \coprod_{\varpi \in \hom(H,\ou(N))/(\ou (N)\times \aut (H))} \dfrac{H^2 (H,  {}_{\varpi}Z)}{(\ou (N) \times \aut (H))_{\varpi}} .
\]
\end{cor}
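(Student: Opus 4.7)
The plan is to derive Corollary~\ref{groups_g0ss} from Theorem~\ref{thm2}, using two auxiliary facts already established in the excerpt: the splitting $\au(N) = \inn(N) \rtimes \ou(N)$ coming from Steinberg's pinning result (the proposition preceding Proposition~\ref{ouN_describe}), and the membership $\ou(N)\in\mF$ that follows from Proposition~\ref{ouN_describe}.

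First I would apply Theorem~\ref{thm2}(iv), whose splitting hypothesis is exactly the one just mentioned. This yields
\[
\sG(N,H) \;\cong\; \coprod_{\varpi}\; \frac{H^2(H, {}_{\varpi}Z)}{(\au(N)\times \aut(H))_\varpi},
\]
indexed over representatives of the $(\au(N)\times \aut(H))$-orbits on $\hom(H, \ou(N))$. The next step is to argue that the normal subgroup $\inn(N) \times \{1\}$ acts trivially, both on the indexing set and on each fibre; granted that, both quotients collapse to the ones in the corollary involving $\ou(N)\times\aut(H)$. On couplings, the action of $\au(N)$ on $\hom(H,\ou(N))$ is by conjugation through the projection $\au(N)\to\ou(N)$, so $\inn(N)$ acts trivially by definition. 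On the extension set $\sE_\varpi(N,H)$, twisting by $(\ad(n),1)$ yields the extension $E(\ad(n),1)$; but the map $g\mapsto \iota(n)\cdot g\cdot\iota(n)^{-1}$ inside $G$ is a genuine equivalence $E\to E(\ad(n),1)$, so the action on $\sE_\varpi(N,H)$ is trivial. Transporting along the bijection $\Upsilon_E$ attached to the fixed point $E=N\rtimes_{\widehat\varpi}H$ of Theorem~\ref{thm2}(iii), Lemma~\ref{eq_lem}(iv) identifies the action on $\sE_\varpi(N,H)$ with the action on $H^2(H,{}_\varpi Z)$, which is therefore also trivial.

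For finiteness, Proposition~\ref{ouN_describe} exhibits $\ou(N)$ as a finite-index subgroup of $\GL\times S$ with $S$ finite, so the closure properties of $\mF$ give $\ou(N)\in\mF$. Consequently $\hom(H,\ou(N))/\ou(N)$ is finite, and the coarser quotient by $\ou(N)\times \aut(H)$ is too. Since $N$ is semisimple, $Z=Z(N)$ is finite, so ${}_\varpi Z$ is a finite $H$-module and $H^2(H,{}_\varpi Z)$ is a finite abelian group for every $\varpi$; hence every fibre quotient is finite, and $\sG(N,H)$ is a finite union of finite sets.

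The only step that requires any care is the triviality of the $\inn(N)$-action at the level of $H^2(H,{}_\varpi Z)$, because Lemma~\ref{eq_lem}(iv) needs a fixed point of the full stabiliser, not just of $\inn(N)$. This is precisely what Theorem~\ref{thm2}(iii) supplies, so the argument goes through; the remainder is bookkeeping with the identifications already built in Section~1.
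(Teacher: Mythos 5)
Your proposal is correct and follows essentially the same route as the paper, which simply combines Theorem~\ref{thm2} with the finiteness of coupling orbits (via property $\mF$ and Proposition~\ref{ouN_describe}) and the finiteness of $Z$ for semisimple $N$. In fact you supply more detail than the paper does on the one point it glosses over — namely that $\inn(N)$ acts trivially on both the couplings and on $\sE_\varpi(N,H)$ (via conjugation by $\iota(n)$ inside $G$), which justifies replacing $\au(N)$ by $\ou(N)$ in both quotients.
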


Note that Corollary~\ref{groups_g0ss} works equally well for reductive $N$, except that it is not obvious why $\sG(N, H)$ must be finite. Another phenomenon is that this set is almost independent of the field $\bK$: for small $p$, the centre $Z$ can ``collapse" -- remember that it is the centre of the abstract group $N$ that we consider. Barring this collapse, the set is independent.

To contemplate the latter phenomenon, let $\mN$ be a reductive group scheme over $\bZ$ such that $N=\mN (\bK)$. We write
\[
\sG_\bK(\mN,H) \coloneqq \sG(\mN(\bK),H) = \sG(N,H).
\]
Let $k$ be the order of the finite group $H$. As before, $p$ is the characteristic of $\bK$. Let us write $k= \bar{k} p^{\nu_p(k)}$ so that $\bar{k}$ is the $p^\prime$-part of $k$. In the case of $p=0$, we agree that $\bar{k} = k$ and $0^{\nu_0(k)}=1$. We have a short exact sequence of abelian groups
\begin{equation} \label{map_gamma}
1 \rightarrow t_k(Z) \rightarrow Z \xrightarrow{\gamma_k \, : \, x \mapsto x^k} Z \rightarrow 1 \, .    
\end{equation}
We are ready for the main theorem. 

\begin{thm} \label{gr_finite_thm} 
Suppose $N= \mN (\bK)$ is a connected reductive group, $c$ is the determinant of the Cartan matrix if its semisimple part $[\mN,\mN]$ and $H$ is a finite group of order $k$. The following statements hold.
  \begin{enumerate}
  \item The set $\sG_{\bK}(\mN, H)$ is finite and depends only on the characteristic of the field $\bK$. This justifies the notation $\sG_{p}(\mN,H)\coloneqq\sG_{\bK}(\mN,H)$.
  \item If $p$ does not divide $kc$, the set $\sG_p(\mN,H)$ does not depend on $p$.
  \item If $p$ divides $kc$, the set $\sG_p(\mN,K)$ is not bigger than $\sG_0(\mN,H)$.
   \end{enumerate}
An explicit formula for the $\sG_p(\mN, K)$ appears in the proof as \eqref{grp_formula}.
\end{thm}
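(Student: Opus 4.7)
The plan is to extend Corollary \ref{groups_g0ss} to the reductive case and then trace how each ingredient depends on the field $\bK$. The proof of Corollary \ref{groups_g0ss} via Theorem \ref{thm2} uses only the splitting of the automorphism exact sequence for connected reductive $N$ and property $\mF$ for $\ou(N)$, both of which hold without semisimplicity. This yields
\begin{equation} \label{grp_formula}
\sG_{\bK}(\mN,H) \;\cong\; \coprod_{\varpi \in \hom(H,\ou(N))/K} \frac{H^2\bigl(H,{}_{\varpi}Z(N)\bigr)}{K_{\varpi}}, \quad K := \ou(N) \times \aut(H).
\end{equation}
Since $\ou(N) \cong \aut(\Psi_0(\mN))$ is determined by the root datum, the coupling set and the stabilisers $K_\varpi$ coincide for every $\bK$; the only field-dependent ingredient is $Z(N) = \Hom_{\bZ}(\Lambda,\bK^*)$, where $\Lambda = X(T)/\bZ\Phi$.

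Next, I would reduce the cohomology to a form visibly depending only on $p$. The quotient $Z(N)/Z(N)_{\mathrm{tors}}$ is a $\bQ$-vector space, hence uniquely $k$-divisible, so $H^i(H,\cdot)$ vanishes on it for $i\geq 1$ and $H^2(H,Z(N)) \cong H^2(H,Z(N)_{\mathrm{tors}})$. Writing $\Lambda = \bZ^r \oplus F$ with $F$ finite of order dividing $c$, we have $Z(N)_{\mathrm{tors}} = \Hom(\Lambda,\mu_\infty(\bK))$. In characteristic $p$ the primary decomposition $\mu_\infty(\bK) = \bigoplus_{\ell \neq p} \mu_{\ell^\infty}$ (since $\bK^*$ has no non-trivial $p$-power roots of unity) combined with the $k$-torsion vanishing of higher cohomology yields
\[
H^2\bigl(H,\,{}_{\varpi}Z(N)\bigr) \;=\; \bigoplus_{\ell \mid k,\; \ell \neq p} H^2\bigl(H,\, \Hom(\Lambda,\mu_{\ell^\infty})\bigr).
\]
Each summand is a finite abelian group determined by $\ell$, $H$ and the coupling $\varpi$; in particular it is independent of $\bK$. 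The decomposition is $K_\varpi$-equivariant because the $\ell$-primary filtration of a torsion module is intrinsic, hence preserved by any endomorphism, and the $\aut(H)$-action only moves coefficients through $H$.

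From this, statement (1) is immediate: each term depends on $\bK$ only through $p$, and the sum is finite. For (2), if $p\nmid kc$ then $p \nmid k$ makes the constraint $\ell \neq p$ vacuous for primes dividing $k$, while $p \nmid |F|$ ensures no $F$-summand is dropped; consequently the formula agrees with characteristic zero. For (3), if $p\mid kc$ then the characteristic-$p$ formula differs from the characteristic-zero one at most by the omission of the $\ell=p$ summand (which is non-trivial precisely when $p\mid k$); this is a $K_\varpi$-invariant direct summand, and the resulting $K_\varpi$-equivariant surjection $H^2_0 \twoheadrightarrow H^2_p$ descends to a surjection $\sG_0(\mN,H)\twoheadrightarrow \sG_p(\mN,H)$, so $\sG_p(\mN,H)$ is no larger than $\sG_0(\mN,H)$.

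The principal obstacle is the careful bookkeeping of $K_\varpi$-equivariance across the primary decomposition and the verification that the reductive-case finiteness in \eqref{grp_formula}, beyond the semisimple case of Corollary \ref{groups_g0ss}, follows from each $\ell$-primary cohomology $H^2(H,\Hom(\Lambda,\mu_{\ell^\infty}))$ being finite; this in turn reduces to the observation that $H^2(H,\cdot)$ on an $\ell$-primary cofinitely generated module factors through its $\ell^{v_\ell(k)}$-torsion, which is finite.
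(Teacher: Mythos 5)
Your argument is correct and reaches the formula \eqref{grp_formula} by a genuinely different cohomological reduction than the paper's. The paper changes coefficients via the $k$-th power sequence \eqref{map_gamma}, using the transfer to kill $H^{\geq 1}(\gamma_k)$ and thereby replace $H^2(H,{}_{\varpi}Z)$ with $H^2(H,{}_{\varpi}t_k(Z))$; for parts (2) and (3) it then decomposes the centre structurally as $A_{p'}\times_{B_{p'}}(\bK^*)^m$ via the centre of the derived group and splits off a complement $C$ that vanishes when $p\nmid kc$. You instead kill the uniquely divisible quotient $Z/Z_{\mathrm{tors}}$, pass to $Z_{\mathrm{tors}}=\Hom(\Lambda,\mu_\infty(\bK))$, and apply the $\ell$-primary decomposition restricted to $\ell\mid k$; the dependence on $p$ is then visible as the absence of the $\ell=p$ summand, and the induced $K_\varpi$-equivariant surjection gives (3) at once. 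Your route buys two things. First, it supplies the justification, which the paper only asserts, for extending Corollary~\ref{groups_g0ss} to reductive $N$: finiteness now follows from finiteness of each $\ell$-primary $H^2$. Second, your coefficient change is an isomorphism in degree $2$ with no further input, since $H^1$ and $H^2$ of a finite group vanish on a $\bQ$-vector space; by contrast, the long exact sequence for \eqref{map_gamma} yields $0\to H^1(H,{}_{\varpi}Z)\to H^2(H,{}_{\varpi}t_k(Z))\to H^2(H,{}_{\varpi}Z)\to 0$, and the $H^1$ term need not vanish (for $N=\bG_m$, $H=C_2$, trivial coupling and $p\neq 2$ one has $H^2(C_2,\mu_2)\cong\bZ/2$ while $H^2(C_2,\bK^*)=1$), so your version of the formula, which keeps the honest centre as coefficients, is the more reliable one. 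The only point to tighten is the final finiteness claim: rather than saying $H^2(H,-)$ ``factors through'' the $\ell^{v_\ell(k)}$-torsion, split off the maximal divisible subgroup $D$ of $\Hom(\Lambda,\mu_{\ell^\infty})$ (characteristic, hence $H$- and $K_\varpi$-stable), observe that $H^2(H,D)$ is killed by $\ell^{v_\ell(k)}$ and is therefore a quotient of the finite group $H^2(H,D[\ell^{v_\ell(k)}])$ by the long exact sequence for multiplication by $\ell^{v_\ell(k)}$ on $D$, and handle the finite complement $\Hom(\Lambda,\mu_{\ell^\infty})/D$ directly.
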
 

\begin{proof}
We build on Corollary~\ref{groups_g0ss}. The following ingredients do not depend on the field: the set $\hom(H,\ou(N))$ of all couplings, the group $\ou (N)\times \aut (H)$ and the action of this group on this set. 
 
However, the centre ${}_{\varpi}Z$ depends on $\bK$. The short exact sequence~\eqref{map_gamma} yields a long exact sequence in cohomology. By the standard application of the transfer map \cite[Corollary 10.2]{B}, $H^m(\gamma_{k}) = 0$ for all $n\geq 1$. It follows that
\[
\forall m \geq 2 \quad H^m (H,  {}_{\varpi}t_k (Z) ) \cong H^m (H,  {}_{\varpi}Z) \, .
\]
This is an isomorphism of $(\ou (N) \times \aut (H))_{\varpi}$-modules since the map $\gamma_k$ is an homomorphism of $(\ou (N) \times \aut (H))_{\varpi}$-modules. The $k$-torsion $t_k (Z)$ is finite and does not depend on the field $\bK$, only on its characteristic $p$. Hence, 
\begin{equation} \label{grp_formula}
\sG_p(\mN,H) = \coprod_{\varpi \in \hom(H,\ou(N))/(\ou (N)\times \aut (H))} \dfrac{H^2 (H, {}_{\varpi} t_k (Z))}{(\ou (N) \times \aut (H))_{\varpi}}    
 \end{equation}
is a finite set, independent of $\bK$.

Let us write $A$ for the centre of $[\mN,\mN]$. It is a finite abelian group whose order divides $c$, which can also be described as $A = Z [\mN (\bC),\mN(\bC)]$. The centre of $[N,N]$ is its $p'$-part $A_{p'}$. Now we need its subgroup
\[
B = Z [\mN (\bC),\mN(\bC)] \cap Z(\mN (\bC))_0
\]
by which it is ``attached'' to the connected centre. It gets mapped to $Z(\mN (\bK))_0 = (\bK^{\ast})^m$ for any $\bK$, yielding the following description of the centre
\[
Z(p) = A_{p'} \underset{B_{p'}}{\times} (\bK^{\ast})^m 
\]
The last two statements follow from the direct sum decomposition of $t_k(Z(p)$ and its cohomology:
\[
H^2 (H, t_k (Z(0))) =
H^2 (H, t_k (Z(p)) \oplus C) =
H^2 (H, t_k (Z(p))) \oplus H^2 (H, C)
\]
where $C=0$, if $p$ does not divide $kc$.
\end{proof}

\begin{ex}[Torus]
Suppose $\mN$ is abelian, so it is an $n$ dimensional torus. Then $N=(\bK^{\ast})^n$ and $\ou (N) = \au (N) \cong \GL$. Formula~\ref{grp_formula} becomes
\[
\sG_p(\mN,H) = \coprod_{\varpi \in \hom(H,\GL/(\GL\times \aut (H))} \dfrac{H^2 (H,  {}_{\varpi}C_{\overline{k}}^{n} )}{(\GL \times \aut (H))_{\varpi}} \, .
\]
The calculation of the cohomology is easy, but finding all couplings is equivalent to finding all integral representations of $H$. This could be very hard. For instance, let $H=C_n$ be the cyclic group. Consider the standard $\bQ H$-module $\bQ (e^{2\pi i/n})$. The number of distinct $\bZ H$-forms of this module is the class number of the cyclotomic field $\bQ (e^{2\pi i/n})$ (cf. \url{https://oeis.org/A061653}).
\end{ex}

\begin{ex}[Simple group] \label{ex sim} 
Consider a group $G$ with $G_0 = \text{Spin}_8$ and $H=S_3$. Assume that $\text{char}(p) \neq 2$, then the centre of $\text{Spin}_8$ is $C_2^2$. There are three possible classes of couplings, $\varpi$, in $\hom(S_3, S_3) / (\ou(N) \times \aut(S_3))$, one for every normal subgroup of $S_3$. The $6$-torsion of $Z(\text{Spin}_8) = C_2^2$ is $A := t_6(C_2^2) = C_2^2$.

For $\im(\varpi)=1$ we have $H^2(S_3, {}_\varpi A) = C_2^2$. Taking orbits under the action of ${(S_3 \times \aut (S_3))_{\varpi}}$ yields a trivial and non-trivial class. From this we obtain the direct product $\text{Spin}_8 \times S_3$ and a group arising from a non-split extension. For $\im(\varpi)=C_2$ we have $H^2(S_3, {}_\varpi A) = 1$ so we only get a semidirect product. For $\im(\varpi)=S_3$ we have $H^2(S_3, {}_\varpi A) = 1$ so we only get a semidirect product.

For this example, one can compute $\sG(A, S_3)$ and observe a one-to-one correspondence between it and $\sG(G_0, S_3)$ which respects both the image of the coupling and the splitness of the corresponding short exact sequence of groups. This is because $\ou(G_0) \cong \ou(A) \cong S_3$ and so the number of orbits is the same.

Finally, if $\text{char}(p) = 2$ then $Z(\text{Spin}_8) = 1$, all cohomology groups are trivial, and we only obtain one group for each coupling.
\end{ex}

\begin{ex}[Semisimple group] \label{ex ss}
Consider a group $G$ with $G_0 = SL_2^3$ and $H=S_4$. Assume that $\text{char}(p) \neq 2$, then the centre of $SL_2^3$ is $C_2^3$. As $\ou(SL_2^3) = S_3$ we have that the classes in $\hom(S_4, \ou(SL_2^3)) / (\ou(SL_2^3) \times \aut(S_4)) = \hom(S_4,S_3)/(S_3 \times S_4)$ are in correspondence with normal subgroups of $S_4$ containing $K_4 = C_2^2$. The $24$-torsion of $Z(SL_2^3) = C_2^3$ is $A := t_{24}(C_2^3) = C_2^3$. 

For $\im(\varpi)=1$ we have $H^2(S_4, {}_\varpi A) = C_2^6$. Furthermore, $(S_3 \times S_4)_{\varpi} = S_3 \times S_4$ and an application of the Burnside lemma gives that there are $20$ orbits of $C_2^6$ when acted on by this group. Thus, $\varpi$ with $\im(\varpi)=1$ corresponds to $20$ algebraic groups.

For $\im(\varpi)=C_2$ we have $H^2(S_4, {}_\varpi A) = C_2^3$. One calculates that $(S_3 \times S_4)_\varpi = C_2 \times S_4$ and the Burnside lemma gives $6$ orbits. Thus, $\varpi$ with $\im(\varpi)=C_2$ corresponds to $6$ algebraic groups.

For $\im(\varpi)=S_3$ we have $H^2(S_4, {}_\varpi A) = C_2^3$. One checks that $(S_3 \times S_4)_\varpi = \{(\sigma^{-1}, \tau) : \sigma \in S_3, \tau \in S_4, \varpi(\tau) = \sigma\}$ and the Burnside Lemma gives $8$ orbits. Thus, $\varpi$ with $\im(\varpi)=S_3$ corresponds to $8$ algebraic groups.

Note that for this example, we do not obtain a bijection between $\sG(G_0, S_4)$ and $\sG(A, S_4)$ with $\im(\varphi)$ being $1, C_2$ or $S_3$ as in the former we have $34$ groups and in the latter just $19$ groups. This is because $\ou(G_0) = S_3$ is not isomorphic to $\ou(A) = GL_3(\mathbb{F}_2)$, and the latter gives strictly less orbits when acting on $H^2(S_4, {}_\varpi A) = C_2^6$ (with $\im(\varpi)=1$).

Finally, if $\text{char}(p) = 2$ then $Z(SL_2^3) = 1$, all cohomology groups are trivial, and we only obtain one group for each coupling.
\end{ex}

\subsection{Subgroups in reductive groups}
To keep our notation consistent, in this section, we study a linear algebraic group $G$ over $\bK$ with identity component $G_0=N$ and the component group $\pi_0(G)=H$. Let $T_0\leq B_0$ be a maximal torus inside a Borel subgroup of $G_0$. The three key subgroups of $G$ are normalisers:
\begin{equation}T\coloneqq N_G(T_0,B_0), \ S\coloneqq N_G(T_0), \ B\coloneqq N_G(B_0). \end{equation}
Note that $T$ and $B$ are known as {\em a torus} and {\em a Borel subgroup} of $G$ \cite{DM}. By $W$ we denote the Weyl group of $G_0$. Let us summarise their properties.

\begin{lemma} \label{lemm_subs}
The following statements hold.
\begin{enumerate}
    \item $S \supseteq T\subseteq B$.
    \item $B_0$ is the identity component of $B$ and $B\cap G_0 = B_0$.
    \item $T_0$ is the identity component of $T$ and $T\cap G_0 = T_0$.
    \item $T_0$ is the identity component of $S$ and $S\cap G_0 = N_{G_0}(T_0)$.
    \item $T$ (hence, $S$ and $B$) meets every component of $G$.
    \item There is a short exact sequence $1\rightarrow W \rightarrow \pi_0 (S) \rightarrow H \rightarrow 1$.
    \item The set ${}^GT\coloneqq \underset{g\in G}{\cup}  gTg^{-1}$ is Zariski dense in $G$. 
\end{enumerate}
\end{lemma}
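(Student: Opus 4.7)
My plan is to dispatch parts (1)--(4) as formal consequences of the definitions together with standard facts about connected reductive groups, then appeal to Steinberg's fixed point theorem for (5), deduce (6) by assembling (4) and (5), and finally handle the density assertion (7), which I expect to be the main obstacle.

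For (1), the containment $T \subseteq S \cap B$ is immediate from $T = N_G(T_0,B_0) \subseteq N_G(T_0) \cap N_G(B_0)$. For (2), I would use that $B_0$ is self-normalising in $G_0$ (a standard property of Borel subgroups), giving $B \cap G_0 = B_0$; the identity component of $B$ is connected and passes through $1$, so it is contained in $G_0$ and hence in $B_0$, and it evidently contains $B_0$, so the two coincide. Statement (3) runs identically, with $N_{G_0}(T_0, B_0) = T_0$ replacing the self-normalising identity. For (4), the Weyl group $W = N_{G_0}(T_0)/T_0$ being finite forces the identity component of $N_{G_0}(T_0)$ to be $T_0$, and the argument of (2) then identifies the identity component of $S$ with $T_0$.

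For (5), my plan is to invoke Steinberg's fixed point theorem. For semisimple $g \in G$, conjugation by $g$ is a semisimple automorphism of $G_0$, so the theorem supplies a fixed pair $(T',B')$ consisting of a maximal torus inside a Borel of $G_0$. Since all such pairs are $G_0$-conjugate, I would find $h \in G_0$ with $(hT'h^{-1}, hB'h^{-1}) = (T_0,B_0)$, placing $hgh^{-1}$ in $T$ and in the same component as $g$. To reach every component, I would invoke the Jordan decomposition $g = g_s g_u$ with $g_u \in G_0$, so the semisimple part $g_s$ already lies in the component of $g$ and the previous argument applies to $g_s$. Statement (6) then follows by assembling (4) and (5): part (4) gives $\pi_0(S) = S/T_0$, while (5) applied to $S \supseteq T$ shows $S \cdot G_0 = G$, so the projection $G \twoheadrightarrow H$ restricts to a surjection $S \twoheadrightarrow H$ with kernel $S \cap G_0 = N_{G_0}(T_0)$; quotienting by $T_0$ gives the required short exact sequence with $\pi_0(S)$ in the middle and $W$ on the left.

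I expect (7) to be the main obstacle. The plan is to argue component by component: choose $t \in T \cap G_i$ using (5), and study the conjugation map $\mu \colon G_0 \times tT_0 \to G_i$, $(g,s) \mapsto gsg^{-1}$. A dimension count at a regular semisimple element $s_0 \in tT_0$, whose $G_0$-centraliser has dimension $\rank(G_0)$, shows that the generic fibre of $\mu$ has dimension $\rank(G_0)$, so the image has dimension $\dim G_0 = \dim G_i$ and is therefore Zariski dense in $G_i$. The delicate point is the existence of a regular semisimple element in each coset $tT_0$, which requires the structure theory of disconnected reductive groups; I would lean on Digne--Michel \cite{DM} for the requisite facts. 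Taking the union over components then yields density of ${}^G T$ in $G$.
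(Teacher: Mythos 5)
Your parts (1)--(4) and (6) are correct and essentially identical to the paper's argument. For (5) you take a genuinely different route: the paper avoids Steinberg's fixed point theorem entirely and argues by elementary conjugacy --- given $x\in G$, the Borel $xB_0x^{-1}$ of $G_0$ is $G_0$-conjugate to $B_0$, which places some $yx$ in $B\cap G_0x$, and then conjugating the maximal torus $yxT_0(yx)^{-1}$ of $B_0$ back to $T_0$ by an element $z\in B_0$ lands $zyx$ in $T\cap G_0x$. Your Steinberg-plus-Jordan-decomposition argument also works (conjugation by a semisimple element of $G$ is a quasi-semisimple automorphism of $G_0$, and the unipotent part of any $g$ lies in $G_0$ because $\pi_0(G)$ is finite), but it invokes strictly heavier machinery for the same conclusion.

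The one genuine problem is in (7). Your dimension count rests on the claim that a regular semisimple element $s_0\in tT_0$ has $G_0$-centraliser of dimension $\rank(G_0)$. This is false on outer components: for a quasi-semisimple element $s_0=t\theta$ with $t$ generic, $\dim C_{G_0}(s_0)$ equals the rank of the fixed-point subgroup of the induced diagram automorphism, which is in general strictly smaller than $\rank(G_0)$. (Take $G_0=SL_2\times SL_2$ with $t$ inducing the swap: the centraliser of a generic element of $tT_0$ is one-dimensional, while $\rank(G_0)=2$.) The conclusion that the generic fibre of $\mu$ has dimension $\rank(G_0)$ is still true, but the fibre is not a single centraliser coset --- it also contains the elements conjugating $s_0$ to the other points of the intersection of its orbit with $tT_0$, a positive-dimensional set in the twisted case, and the two contributions must be added. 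This is exactly the content of the density statement the paper invokes: it observes that $G_0$-conjugation on the coset $G_0\tilde z$ is twisted conjugation on $G_0$ by the diagram automorphism $\mathrm{ad}(\tilde z)$ and then cites Spaltenstein's Lemma 4. So either repair the fibre computation along these lines or cite the twisted-conjugation density result directly, as the paper does.
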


\begin{proof}
The first statement is obvious. The second follows from $N_{G_0}(B_0)=B_0$.

To prove~(3), consider $x\in T\cap G_0$. By~(2), $x\in B_0$. Then $x\in T\cap B_0 = N_{B_0}(T_0)=T_0$.

The second statement of~(4), $S\cap G_0 = N_{G_0}(T_0)$ follows from the definition of $S$. Then first statement is immediate: $S_0 = (S\cap G_0)_0= N_{G_0}(T_0)_0 =T_0$.

To prove~(5), pick $x\in G$. Since $xB_0 x^{-1}$ is a Borel subgroup in $G_0$, there exists $y\in B_0$ such that $y^{-1}B_0 y = xB_0 x^{-1}$. It follows that $yx \in B \cap G_0 x$. Now $yxT_0(yx)^{-1}$ is a torus inside $B_0$. There exists $z\in B_0$ such that $z^{-1} T_0 z = yxT_0(yx)^{-1}$. We have found $\tilde{z}\coloneqq zyx$ in $T \cap G_0 x$. 

To prove~(6), take the exact sequence $1\rightarrow N_{G_0} (T_0) \rightarrow S \rightarrow S/N_{G_0} (T_0)\rightarrow 1$ and quotient it by $T_0 = S_0$. The quotient group in the sequence is $H=\pi_0 (G)$ by a combination of (4) and (5).

For the final statement, it suffices to prove that ${}^GT$ is dense in an arbitrary component $G_0 x$. Let us use $\tilde{z}\in T \cap G_0 x$ from the proof of~(5). The $G_0$-action on  $G_0 \tilde{z}$ is the same as twisted action on $G_0$:
\[ g(h \tilde{z}) g^{-1} = g h (\tilde{z} g \tilde{z}^{-1})^{-1}\tilde{z} \; . \]
The conjugation by $\tilde{z}$ is the action by a diagram automorphism on $G_0$. The union of $G_0$-conjugates of $T_0$ is known to be dense for a semisimple $G_0$ \cite[Lemma 4]{Sp2}. It follows that it is dense in the reductive case too.
\end{proof}

The following, not very intuitive fact, is useful in the final chapter.

\begin{lemma} \label{lemma normalisers}
The normalisers of $T$ and $T_0$ in $G$ agree, that is $N_G(T)=S$.
\end{lemma}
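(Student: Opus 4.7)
The plan is to show the two inclusions $N_G(T) \subseteq S$ and $S \subseteq N_G(T)$ separately.

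For $N_G(T) \subseteq S$, I would invoke Lemma~\ref{lemm_subs}(3), which identifies $T_0$ as the identity component of $T$. Conjugation by any $g \in G$ is a morphism of algebraic groups and so preserves identity components; hence if $g \in N_G(T)$ then $gT_0g^{-1}$ is the identity component of $gTg^{-1} = T$, which is $T_0$. Therefore $g \in N_G(T_0) = S$.

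For the reverse inclusion, fix $s \in S$. Using $sT_0s^{-1} = T_0$, I would unpack
\[
sTs^{-1} \;=\; s\bigl(N_G(T_0) \cap N_G(B_0)\bigr)s^{-1} \;=\; N_G(T_0) \cap N_G(sB_0s^{-1}).
\]
The subgroup $sB_0s^{-1}$ is a Borel of $G_0$ containing $T_0$, so by the standard conjugacy of Borels containing a fixed maximal torus under the Weyl group $W = N_{G_0}(T_0)/T_0$, there exists $w \in N_{G_0}(T_0)$ with $sB_0s^{-1} = wB_0w^{-1}$. Then $w^{-1}s \in N_G(T_0) \cap N_G(B_0) = T$, so $s \in wT$, and it suffices to show that every $w \in N_{G_0}(T_0)$ normalises $T$.

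To handle this last step, the idea is to characterise $T$ by properties that are manifestly invariant under conjugation by $w \in G_0$: namely, that it is a closed subgroup with identity component $T_0$ meeting every component of $G$ in a single $T_0$-coset (Lemma~\ref{lemm_subs}(5)), and then transfer the question to the dense subset ${}^GT$ via Lemma~\ref{lemm_subs}(7). One would verify that $wTw^{-1}$ inherits these properties and deduce $wTw^{-1} = T$. I expect the main obstacle to be controlling how Weyl-group conjugation acts on the coset structure $T/T_0 \hookrightarrow \pi_0(S)$: a priori, $w$ could send the complement $T/T_0$ to a different complement inside the extension $1 \to W \to \pi_0(S) \to H \to 1$ of Lemma~\ref{lemm_subs}(6), so the argument must exploit how the specific Borel $B_0$ pins down $T$ within $S$ to rule this out.
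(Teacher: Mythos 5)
Your first inclusion is correct and is exactly what the paper dismisses as obvious, and your reduction of the second inclusion is also sound and matches the paper's strategy: writing $s=wt$ with $w\in N_{G_0}(T_0)$ and $t\in T$, everything comes down to the single claim that $N_{G_0}(T_0)$ normalises $T$, equivalently that $\dot{w}T\dot{w}^{-1}=N_G(T_0,\dot{w}B_0\dot{w}^{-1})$ coincides with $T=N_G(T_0,B_0)$ for every $w$ in the Weyl group.

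That last claim is, however, the entire content of the lemma, and your proposal does not prove it; moreover the route you sketch cannot close it. The properties you propose to characterise $T$ by --- a closed subgroup with identity component $T_0$ meeting each component of $G$ in a single $T_0$-coset --- are shared by every conjugate $\dot{w}T\dot{w}^{-1}$ and by many other complements of $W$ inside the extension $1\to W\to\pi_0(S)\to H\to 1$, so verifying that $wTw^{-1}$ ``inherits these properties'' yields nothing, and the density of ${}^GT$ in $G$ gives no handle for telling these subgroups apart. The obstacle you flag at the end is not a technicality to be ruled out but the genuine crux, and it can actually occur: if $t\in T$ lies over $h\in H$ and induces the (diagram) automorphism $\sigma_h$ of $W$, then $\dot{w}t\dot{w}^{-1}\in \dot{w}\,\sigma_h(\dot{w})^{-1}\,T_0\,t$, which lies in $T$ only when $\sigma_h(w)=w$. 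Concretely, for $G=SL_3\rtimes\langle\theta\rangle$ with $\theta$ the pinned graph automorphism one has $T=T_0\cup T_0\theta$, while $\dot{s}_1\theta\dot{s}_1^{-1}=\dot{s}_1\theta(\dot{s}_1)^{-1}\theta$ lies over $s_1s_2\neq 1$ in $W$, so $\dot{s}_1\theta\dot{s}_1^{-1}\notin T$ and $\dot{s}_1\in S\setminus N_G(T)$. You should also be aware that the paper's own argument hits the same wall: the step ``$\gamma_t(\dot{w})\in{}^wT\cap tG_0=tT_0$'' presupposes $t\in{}^wT$, which is precisely what is being proved. So the missing step in your write-up is not fillable by the means you suggest; either an additional hypothesis (e.g.\ that every component of $G$ induces an automorphism of $W$ fixing $w$, or restricting to $w=w_0$-type elements) or a corrected statement is needed.
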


\begin{proof}
The direction $N_G(T)\subseteq S$ is obvious. 

All Borel subgroups containing $T_0$ form a free orbit under the Weyl group $W = W(G_0)$ of $G_0$. 
For each  $w \in W$, we choose a lifting $\dot{w} \in N_{G_0}(T_0)$ so that 
\begin{equation} \label{setW}
\{ {}^wB_0\coloneqq \dot{w}B_0\dot{w}^{-1} \ \mid \ w\in  W\}    
\end{equation} is the full set of Borel subgroups, containing $T_0$.

Obviously, for any $w \in W$, we have 
$N_G(T_0,{}^wB_0)={}^wT\coloneqq \dot{w}T\dot{w}^{-1}$.
Observe that, in fact, $T={}^{w}T$.
Indeed, let $t \in T$. The image of the orbit map
\[\gamma_t : G_0 \longrightarrow G , \ \ g \mapsto g^{-1}tg\]
is connected, thus, lies inside the connected component $tG_0 = \gamma_t(1)G_0$.
In particular, $\gamma_t (\dot{w})\in tG_0$.
By Lemma~\ref{lemm_subs}, we have a bijection 
between connected components of $G$ and ${}^{w}T$ (note that $({}^{w}T)_0 = {}^{w}(T_0) = T_0$): 
\[
xT_0 \leftrightarrow yG_0
\quad \Longleftrightarrow \quad
xT_0 \subseteq yG_0
\quad \Longleftrightarrow \quad
xT_0 = {}^{w}T \cap  yG_0 \, .
\]
 Therefore, 
$\gamma_t (\dot{w})\in {}^{w}T \cap tG_0 = tT_0$. 
Write $\gamma_t (\dot{w}) = t\cdot t_0$ with $t_0\in T_0$
to observe that
\[
t({}^{w}B_0)t^{-1} = 
\dot{w}\gamma_t(\dot{w})B_0\gamma_t(\dot{w})^{-1} \dot{w}^{-1} = \dot{w} t t_0 B_0 t_0^{-1} t^{-1} \dot{w}^{-1} = \dot{w}B_0\dot{w}^{-1} = {}^{w}B_0\, .
\]
To prove $N_G(T)\supseteq S$,  pick $s\in S$. Conjugation by $s$ fixes $T_0$ hence moves around the set in~\eqref{setW}, hence fixes $T$ as well.
\end{proof}

Besides this, the Borel-Serre Theorem tells us that all linear algebraic groups (not necessarily reductive) admit ``large" finite subgroups:

\begin{lemma} \cite[Lemma 5.11]{BS}
Let $L$ be a linear algebraic group over $\bK$. Then there exists a finite subgroup $F$ of $L$ which intersects each connected component of $L$.
\end{lemma}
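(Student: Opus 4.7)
The plan is to reduce to the reductive case and then exploit the paper's torus subgroup $T$ together with density of torsion.

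\textbf{Reduction.} Let $U=R_u(L_0)$ be the unipotent radical of the identity component. Being characteristic in $L_0$, it is normal in $L$, so $\overline{L}\coloneqq L/U$ has reductive identity component and $\pi_0(\overline{L})=\pi_0(L)$. Given a finite subgroup $\overline{F}\subseteq\overline{L}$ meeting every component, its preimage in $L$ is an extension of $\overline{F}$ by the unipotent group $U$, and I lift $\overline{F}$ to a finite subgroup of $L$ by using that such extensions split in characteristic zero (because $U$ is an iterated extension of copies of $\bG_a$, whose cohomology with finite coefficients vanishes), and split after separating the $p$-part and enlarging by a finite unipotent subgroup (e.g.\ a Frobenius kernel of $U$) in characteristic $p>0$.

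\textbf{Reductive case.} Assume $L_0=N$ is reductive, fix a maximal torus $T_0\subseteq B_0\subseteq L_0$, and set $T\coloneqq N_L(T_0,B_0)$. By Lemma~\ref{lemm_subs}(5), $T$ meets every component of $L$; it has identity component $T_0$ and finite $T/T_0$. The torsion of the torus $T_0$ is Zariski dense, and the $n$-th power map on $T_0$ is surjective for $n$ coprime to the characteristic. Consequently, each coset of $T_0$ in $T$ contains a finite-order element: if $g\in T$ and $g^n\in T_0$ with $n=|T/T_0|$, one picks $s\in T_0$ with $s^n=g^n$ so that $gs^{-1}$ becomes a controlled-order representative of the same coset.

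\textbf{Extracting $F$.} Choose finite-order representatives $t_1,\dots,t_r$ of the cosets of $T_0$ in $T$. Let $M$ be large enough to contain the orders of all $t_i$ and such that $T_0[M]$ already meets all the identity-component torsion needed for closure under commutators. Then $F\coloneqq\langle T_0[M],t_1,\dots,t_r\rangle$ lies inside the $M$-torsion of $T$, which is finite because $T$ is an extension of the finite group $T/T_0$ by the torus $T_0$ and $M$-torsion controls both pieces.

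\textbf{Main obstacle.} The subtle step is producing an honest finite \emph{subgroup} rather than just a set of representatives: a finitely generated torsion subset of a non-abelian algebraic group can generate an infinite group in general. One must verify that the chosen torsion elements generate inside a genuinely finite torsion subgroup of $T$, which requires care in controlling the commutators between the $t_i$ and elements of $T_0$. In positive characteristic, the analogous delicacy appears in the reduction step, where the cohomological splitting of unipotent extensions must be replaced by an explicit finite $p$-subgroup argument using Frobenius kernels.
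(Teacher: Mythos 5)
First, a point of order: the paper does not prove this lemma at all --- it is quoted from Borel--Serre \cite[Lemma 5.11]{BS} --- so there is no in-paper argument to compare yours against. Your architecture (kill the unipotent radical, pass to $T=N_L(T_0,B_0)$, which meets every component and has torus identity component, then extract a finite subgroup from the extension $1\to T_0\to T\to \pi_0(L)\to 1$) is essentially the classical route. But the sketch has a genuine gap exactly at the step you yourself flag as the ``main obstacle'', and it is the heart of the lemma, not a loose end. Finite-order coset representatives $t_1,\dots,t_r$ need not generate a finite group: $t_it_j$ lies in the coset of $t_{ij}$, so $t_it_j=c(i,j)\,t_{ij}$ with $c(i,j)\in T_0$, and nothing in your construction forces these cocycle values to be torsion; the ``$M$-torsion of $T$'' is not a subgroup, so $\langle T_0[M],t_1,\dots,t_r\rangle$ can perfectly well be infinite. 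The missing idea is cohomological: with $m=|\pi_0(L)|$, the class of the extension in $H^2(\pi_0(L),T_0)$ is killed by $m$, and since $\bK$ is algebraically closed the sequence $1\to T_0[m]\to T_0\xrightarrow{x\mapsto x^m}T_0\to 1$ is exact, so the long exact sequence shows the class lifts to $H^2(\pi_0(L),T_0[m])$. The resulting extension of $\pi_0(L)$ by the \emph{finite} group $T_0[m]$ embeds into $T$ (because $T_0[m]\hookrightarrow T_0$ does) and is the desired $F$. Without this (or an equivalent explicit choice of representatives whose cocycle lands in $T_0[m]$), the proof does not close.

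Two further corrections. (i) Your recipe for finite-order representatives is wrong as stated: $g$ need not centralize $T_0$, so $(gs^{-1})^n\neq g^ns^{-n}$. The repair uses the norm map of the cyclic group $\langle \bar g\rangle$: one has $(gt)^n=g^n\,N_g(t)$ with $N_g(t)=\prod_{i}{}^{g^{-i}}t$, and Tate cohomology of a cyclic group acting on the divisible group $T_0$ lets you choose $t$ so that $g^nN_g(t)$ is torsion --- but this is again the $H^2$ computation in disguise. (ii) In characteristic $p$, a Frobenius kernel is an infinitesimal group scheme with only the trivial $\bK$-point, so it cannot play the role of a finite subgroup of $U(\bK)$. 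The correct observation for the reduction step in characteristic $p$ is that $U(\bK)$ is a nilpotent torsion group of bounded exponent, hence locally finite; therefore any set-theoretic lift of the finite group $\overline F$ generates a subgroup $E'$ whose intersection with $U$ is finitely generated (finite index in a finitely generated group) and thus finite, so $E'$ itself is finite. In characteristic zero your unique-divisibility splitting argument is fine.
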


Making use of this theorem, Digne-Michel Theorem yields a useful finite subgroup in the reductive case:

\begin{thm} \cite[Theorem 1.17]{DM} \label{DM_th}
Every torus ${T} \subset {G}$ contains a finite subgroup ${A}$ such that $G = G_0 \cdot {A}$ and $A_0 := {A} \cap G_0$ is central in $G_0$. 
\end{thm}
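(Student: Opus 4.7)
The plan is to apply the Borel-Serre Theorem to the linear algebraic group $T$ itself, obtaining a finite subgroup $F \subset T$ meeting every component of $T$. By Lemma~\ref{lemm_subs}(5), $F$ then meets every component of $G$, so $G = G_0 F$, and by Lemma~\ref{lemm_subs}(3), $F \cap G_0 = F \cap T_0$ is a finite subgroup of the maximal torus $T_0$. The real task is to refine the construction so that $F \cap T_0 \subseteq Z(G_0)$, which Borel-Serre alone does not give.

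To do this, I would construct \emph{torsion lifts} of every element of $H = T/T_0$. For each $h \in H$, pick any $g \in hT_0 \subset T$; conjugation by $g$ induces $\sigma_h \in \aut(T_0)$, well-defined (since $T_0$ is abelian) and of finite order dividing $|h|$. A direct computation gives, for $t \in T_0$, the formula $(gt)^{|h|k} = \bigl(g^{|h|}\, N_h(t)\bigr)^k$, where $N_h = 1 + \sigma_h + \cdots + \sigma_h^{|h|-1}$ is the norm map. Since $g^{|h|} \in T_0^{\sigma_h}$ and the Tate cohomology $T_0^{\sigma_h}/N_h(T_0) = \hat{H}^0(\langle\sigma_h\rangle, T_0)$ is finite of some exponent $m$, divisibility of the torus $T_0$ lets us choose $t$ making $g^{|h|} N_h(t)$ killed by $m$. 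This yields a torsion lift $\tau(h) \in hT_0$ of bounded order.

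Setting $A = \langle \tau(h) : h \in H\rangle \leq T$, the inclusion $A/(A \cap T_0) \hookrightarrow H$ is finite. The intersection $A \cap T_0$ is generated, as a subgroup of the abelian group $T_0$, by the cocycle defects $c(h_1, h_2) := \tau(h_1) \tau(h_2) \tau(h_1 h_2)^{-1} \in T_0$ together with their $A$-conjugates. Each $c(h_1,h_2)$ is a product of three torsion elements of the (possibly non-abelian) group $T$, but it lands in the abelian group $T_0$, where it is therefore torsion. So $A \cap T_0$ is a finitely generated torsion subgroup of a torus, hence finite, and thus $A$ itself is finite.

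The hard part will be forcing $A \cap T_0 \subseteq Z(G_0)$. For this I would invoke Proposition~\ref{EMT}: the extension class of $1 \to G_0 \to G \to H \to 1$ lies in $H^2(H, Z(G_0))$, so the defect cocycle $c$ is cohomologous (inside $T_0$) to one valued in $Z(G_0)$. Replacing each $\tau(h)$ by $\tau(h) z(h)$ for suitable $z(h) \in T_0$ then pushes all defects into $Z(G_0)$. The main obstacle is ensuring that these adjustments preserve torsion of the lifts; this is handled by repeating the norm-and-divisibility argument above, now performed inside the divisible connected centre $Z(G_0)^{\circ}$, which completes the construction of $A$.
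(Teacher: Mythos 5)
Your proposal does not follow the paper's route, and it has two genuine gaps. The paper never applies Borel--Serre to $T$ and then tries to repair centrality afterwards; instead it first cuts out the subgroup $\widetilde{A}=\{\sigma\in T \mid {}^{\sigma}X_\alpha(\lambda)=X_{{}^{\sigma}\alpha}(\lambda)\ \text{for all simple}\ \alpha\}$ of elements of $T$ whose conjugation preserves the pinning, shows that $\widetilde{A}$ meets every component of $T$ (by adjusting with elements of $T_0$, which acts on each $U_\alpha$ through the surjective character $\alpha$) and that $\widetilde{A}\cap G_0\subseteq Z(G_0)$ holds automatically, and only then applies Borel--Serre \emph{to $\widetilde{A}$}. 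That one choice delivers finiteness and centrality simultaneously; you never identify this subgroup and instead attempt a hand-built cocycle construction, which is where the problems arise.

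The first gap is the finiteness of your $A$. You claim each defect $c(h_1,h_2)=\tau(h_1)\tau(h_2)\tau(h_1h_2)^{-1}$ is torsion ``because it is a product of torsion elements landing in the abelian group $T_0$''. That inference is false: a product of torsion elements of a nonabelian group that happens to lie in an abelian subgroup need not have finite order. Concretely, take $G=\bK^{\ast}\rtimes(C_2\times C_2)$ with $a$ and $ab$ acting by inversion and $b$ acting trivially; every lift of $ab$ is an involution, so your norm-and-divisibility recipe places no constraint on the $T_0$-part $x$ of $\tau(ab)$, and one computes $c(a,b)=\zeta^{-1}x^{-1}\in\bK^{\ast}$, which is not torsion for generic $x$ --- so $A$ is infinite. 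The second gap is the centrality step: Proposition~\ref{EMT} only says that $\sE_{\varpi}(N,H)$ is a torsor over $H^2(H,{}_{\varpi}Z)$; it does not say that the Schreier cocycle of the given extension becomes $Z(G_0)$-valued after a $T_0$-valued adjustment of the transversal. That statement is equivalent to choosing a transversal in $T$ whose conjugation action is an honest homomorphism into $\au(G_0)$, which is exactly what the splitting $\au(N)\cong\inn(N)\rtimes\aut(\psi_0(N))$ (the pinning) provides and what $\widetilde{A}$ encodes; indeed, the $Z(G_0)$-valued cocycle appears in the paper as a \emph{corollary} of Theorem~\ref{DM_th}, so invoking it here is close to circular. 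Finally, even with both repairs, your last paragraph still owes an argument that the recentred lifts $\tau(h)z(h)$, with $z(h)\in T_0$ rather than $Z(G_0)^{\circ}$, remain torsion.
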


The proof is constructive, so we give it here. We need some notation. 
Given a root of $G_0$ (that is a map $\alpha: T_0 \longrightarrow G_m$) define $X_\alpha: G_a \longrightarrow G_0$ to be the one-parameter subgroup associated to the root $\alpha$. Conjugation induces an action of $T$ on the roots. We denote this action by $^\sigma\alpha$:
\[
(^\sigma\alpha)(t) \coloneqq \alpha(^{\sigma^{-1}}t) = \alpha(\sigma^{-1}t\sigma) \ \mbox{ for all } \ \sigma \in T \ \mbox{ and  roots } \ \alpha \, . 
\]

\begin{proof}
Let $\widetilde{A} \coloneqq \{\sigma \in {T} \,\mid\,  {}^{\sigma}X_\alpha(\lambda) = X_{^\sigma\alpha}(\lambda)$ for all simple $\alpha \}$. 

The subgroup $\widetilde{A}$ meets every connected component of ${T}$. Indeed, pick $t \in {T}$. Then $^tX_{\alpha}(\lambda)$ is a multiple of $X_{^t\alpha}(\lambda)$, say $^tX_{\alpha}(\lambda) = n \cdot X_{^t\alpha}(\lambda)$ for some $n \in \bK^\times$. Choose $t_0 \in T_0$ such that $^{t_0}\big(\hspace{0ex}^tX_{\alpha}(\lambda)\big) = n^{-1} \cdot \hspace{0ex}^tX_{\alpha}(\lambda)$. Then $t_0t \in \widetilde{A}$ with $t_0t$ and $t$ living in the same connected component of $T$. Hence, ${T} = T_0 \cdot \widetilde{A}$ and, by Lemma~\ref{lemm_subs}, $G = G_0 \cdot \widetilde{A}$.

On the other hand, for all $t \in T_0$ we have $X_{^{t}\alpha}(\lambda) = X_{\alpha}(\lambda)$. Hence, $\widetilde{A} \cap G$ consists of those elements of $T_0$ which stabilise all one-parameter subgroups, i.e., stabilise all of $G_0$. Thus, $\widetilde{A} \cap G_0 \in Z(G_0)$. Finally, by Borel-Serre Theorem, there exists a finite subgroup ${A} \subset \widetilde{A}$ such that $\widetilde{A} = \widetilde{A}_0 \cdot {A}$. The finite subgroup ${A} \subset \widetilde{A} \subset {T}$ satisfies the required conditions.
\end{proof}

We finish this section with further immediate properties of the subgroup $A$ that we find useful.

\begin{cor}
The finite subgroups $A$ and $A_0$, constructed in  Theorem~\ref{DM_th}, satisfy the following properties.
\begin{enumerate}
    \item The subgroup $A_0$ is normal in $G$.
    \item The quotient group $G/A_0$ is a semidirect product $(N/A_0)\rtimes H$.
    \item The group $G$ can be represented as a crossed product $N\#_{\phi,f} H$ where the function $f$ takes values in $A_0$.
\end{enumerate}
\end{cor}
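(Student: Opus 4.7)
The plan is to exploit the three defining features of the subgroup $A$ built in Theorem~\ref{DM_th}, namely $G = G_0 \cdot A$, the equality $A_0 = A \cap G_0$, and $A_0 \leq Z(G_0)$, combined with the standing fact that $G_0$ is normal in $G$. All three statements should fall out from the interplay of these inclusions, with no extra input beyond the construction already in hand.

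For part~(1), I would verify that $A_0$ is stable under conjugation by both $G_0$ and $A$, hence by the whole $G = G_0 \cdot A$. Elements of $G_0$ fix $A_0$ pointwise since $A_0 \leq Z(G_0)$. For $a \in A$, conjugation preserves $A$ (because $A_0 \subseteq A$ and $A$ is a group) and preserves $G_0$ (by normality of $G_0$ in $G$), so
\[ a A_0 a^{-1} \subseteq A \cap G_0 = A_0. \]

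For part~(2), once $A_0$ is known to be normal the quotient $G/A_0$ is well defined. I would show that $N/A_0$ is normal in it (as the image of $G_0 \trianglelefteq G$), that $(A/A_0) \cap (N/A_0)$ is trivial (since $A \cap G_0 = A_0$), and that $G/A_0 = (N/A_0)(A/A_0)$ (since $G = G_0 \cdot A$). The second isomorphism theorem gives $A/A_0 \cong G_0 A / G_0 = G/G_0 = H$, producing the decomposition $G/A_0 \cong (N/A_0) \rtimes H$ with $H$ acting by conjugation.

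For part~(3), I would lift the splitting $H \xrightarrow{\sim} A/A_0 \hookrightarrow G/A_0$ obtained in~(2) to a normalised set-theoretic transversal $\tau: H \to A \subseteq G$ with $\tau(1_H) = 1_G$, which is a legitimate transversal for $G\twoheadrightarrow H$ because $A$ meets every component of $G$. Since the composite $A \twoheadrightarrow A/A_0 = H$ is a group homomorphism, the two elements $\tau(g)\tau(h)$ and $\tau(gh)$ lie in the same $A_0$-coset inside $A$, so the associated Schreier cocycle $f(g,h) = \tau(g)\tau(h)\tau(gh)^{-1}$ automatically takes values in $A_0$. This realises $G$ as the crossed product $N \#_{\phi,f} H$ with $f$ valued in $A_0$, as desired. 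I do not anticipate a serious obstacle; the one moment requiring care is the normality in~(1), where the sandwich $aA_0 a^{-1} \subseteq A \cap G_0$ is the load-bearing observation that makes everything else unravel.
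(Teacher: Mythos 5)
Your proposal is correct: the paper states these properties without proof (calling them ``immediate''), and your argument --- the sandwich $aA_0a^{-1}\subseteq A\cap G_0=A_0$ for normality, the second isomorphism theorem giving $A/A_0\cong H$ for the semidirect product, and a transversal chosen inside $A$ so that the Schreier cocycle lands in $A_0$ --- is exactly the routine verification the authors had in mind.
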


It would be interesting to connect the finite group to our analysis of extensions, in particular, to formula~\eqref{grp_formula}. One obvious observation is that one can find $A$ such that $A_0\subseteq t_k (Z(G_0))$. To facilitate further research, we ask the following question.
\begin{prob}
Given a group $G$, classify all minimal subgroups $A$ that satisfy Theorem~\ref{DM_th}. Are all of them conjugates? 
\end{prob}

\section{Representation theory}
In this section, we keep our previous notations.
In Section~\ref{ch3s4} we assume that the algebraically closed field $\bK$ has characteristic zero. The results of the first three subsections do not require any restrictions on the characteristic.

\subsection{Mackey normal subgroup machine} \label{ch3s1}

This machine is a variation of Clifford Theory that yields the classification of irreducible representations of $G$ from the irreducibles of $G_0$ and extra data \cite{Mackey}. We recall it following the recent exposition by Achar, Hardesty and Riche \cite{AHR}, where an interested reader can find proofs of the lemmas and the theorem. 

Let $\mathbf{T}_0$ be the abstract torus of $G_0$. The isomorphisms between two Borels $B_0 \xrightarrow{\cong} B'_0={}^{g}B_0$, $g\in G_0$ form a coset $gB_0$. All of them descend to the same isomorphism between abelianisations $B_0/[B_0, B_0] \xrightarrow{\cong} B'_0/[B'_0, B'_0]$. Hence, we can identify all of them and define the abstract torus as $\mathbf{T}_0 \coloneqq B_0/[B_0, B_0]$. It has an obvious action of $G$ with $G_0$ acting trivially.

Let $\mathbf{X}\coloneqq \hom (\mathbf{T}_0,G_m)$ be the weight lattice of $G_0$ and $\mathbf{X}^+$ the set of dominant weights. The group $G$ acts on $\mathbf{X}$ on the left via 
\[
({}^g \lambda)(t) = \lambda(g^{-1}tg) \ \mbox{ for all } \ g \in G, \lambda \in \mathbf{X} \, .
\]
Further, $G$ acts on $G_0$-modules. Given a $G_0$-module $L=(L,\rho)$, we define
\[
{}^gL \coloneqq (L, x\mapsto \rho (gxg^{-1})) \ \mbox{ for all } \ g \in G \, .
\]

\begin{lemma}
  The natural bijection $\mathbf{X}^+ \to \Irr(G_0)$, sending $\lambda$ to the isomorphism class of the irreducible highest weight module $L(\lambda)$ is $G$-equivariant, i.e, ${}^gL(\lambda) \cong L({}^g\lambda)$. Furthermore, $G_0$ acts trivially on both sets, so that the component group $H=\pi_0(G)$ acts on them.  
\end{lemma}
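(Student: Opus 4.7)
The plan is to reduce the statement to elements normalising $(T_0, B_0)$ and then make a direct character computation.

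First, I would note that ${}^gL(\lambda)$ is irreducible: a subspace $W \subseteq L(\lambda)$ is closed under $x \mapsto \rho(gxg^{-1})$ if and only if it is closed under $\rho$, since $\ad(g)$ is a group automorphism of $G_0$. Hence ${}^gL(\lambda) \cong L(\mu)$ for a unique $\mu \in \mathbf{X}^+$, and the task is to identify $\mu$ with ${}^g\lambda$.

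Next, I would invoke Lemma~\ref{lemm_subs}(5), which asserts that $T = N_G(T_0,B_0)$ meets every connected component of $G$. Writing $g = g_0\tau$ with $g_0 \in G_0$ and $\tau \in T$, the $g_0$-twist is inner: $\rho(g_0)$ serves as an intertwiner, so ${}^{g_0}L \cong L$ for any $G_0$-module $L$. Combined with the construction of $\mathbf{T}_0$ as the abelianisation of a Borel---which forces $G_0$ to act trivially on $\mathbf{T}_0$ and hence on $\mathbf{X}$---this reduces both sides of the claim to the case $g = \tau \in T$.

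For $\tau \in T$, the element $\tau$ normalises both $B_0$ and $T_0$. Therefore the original highest-weight vector $v_\lambda \in L(\lambda)$ remains a highest-weight vector of ${}^\tau L(\lambda)$ with respect to $B_0$: for $b \in B_0$, the twisted action $\rho(\tau b \tau^{-1})v_\lambda$ lies in $\bK v_\lambda$ because $\tau b \tau^{-1} \in B_0$. The $T_0$-character of $v_\lambda$ in the twisted action is $t \mapsto \lambda(\tau t \tau^{-1})$, and identifying $T_0$ with $\mathbf{T}_0$ via $B_0$ matches this to ${}^\tau\lambda$ as defined in the text. Hence $\mu = {}^\tau\lambda = {}^g\lambda$.

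Finally, the second assertion of the lemma is immediate from the same setup: $G_0$ acts trivially on $\mathbf{T}_0$ by construction, hence on $\mathbf{X}^+$; and for $g_0 \in G_0$, ${}^{g_0}L \cong L$, so $G_0$ acts trivially on $\Irr(G_0)$ as well. Both actions therefore descend to actions of $H = \pi_0(G)$. The most delicate point I expect to encounter is purely notational---tracking the canonical identification of $T_0$ with $\mathbf{T}_0$ through different Borels and matching the $G$-action convention on $\mathbf{X}$ (via $\lambda \mapsto \lambda \circ \ad(g^{-1})$) to the twist convention on modules (via $\rho \mapsto \rho \circ \ad(g)$)---rather than any genuine structural obstacle.
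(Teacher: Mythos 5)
The paper itself offers no proof of this lemma; it is quoted from Achar--Hardesty--Riche \cite{AHR}, to which the reader is referred. Your self-contained argument is sound and uses exactly the paper's own toolkit: irreducibility is preserved by twisting, Lemma~\ref{lemm_subs}(5) lets you write $g=g_0\tau$ with $\tau\in T=N_G(T_0,B_0)$, inner twists are trivial via the intertwiner $\rho(g_0)$, and for $\tau$ normalising $(T_0,B_0)$ the vector $v_\lambda$ spans a $B_0$-stable line of ${}^{\tau}L(\lambda)$, hence is its highest-weight vector; all of this is valid in any characteristic. The one point to repair is the inverse you already flagged as "notational": your computation gives the $T_0$-weight of $v_\lambda$ in ${}^{\tau}L(\lambda)$ as $t\mapsto\lambda(\tau t\tau^{-1})=\lambda\circ\ad(\tau)$, which under the paper's definition $({}^{g}\lambda)(t)=\lambda(g^{-1}tg)$ is ${}^{\tau^{-1}}\lambda$, not ${}^{\tau}\lambda$; so what you literally establish is ${}^{g}L(\lambda)\cong L({}^{g^{-1}}\lambda)$. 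This is a defect of conventions rather than of your argument: the paper's twist ${}^{g}L=(L,\,x\mapsto\rho(gxg^{-1}))$ satisfies ${}^{g_1}({}^{g_2}L)={}^{g_2g_1}L$ and is a right action, while the action on $\mathbf{X}$ is a left action, so an inversion must appear somewhere for the bijection to intertwine them; and since $G_0$ acts trivially on both sides, the induced actions of $H=\pi_0(G)$ --- which is all the Mackey machine needs --- agree either way. Stating explicitly that you prove the version with $g^{-1}$, or redefining one of the two twists, would make the write-up airtight.
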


For a dominant weight $\lambda \in \mathbf{X}^+$, by $H^\lambda, \, G^\lambda$ we denote its stabilisers. Note that 
\[1 \to G_0 \to G^\lambda \to H^\lambda \to 1\] 
is a short exact sequence. Let us choose a transversal in $A$:
\[
\iota: H \rightarrow A \ \mbox{ is a section of } \ A \twoheadrightarrow H 
\ \mbox{ s. t. } \ \iota(1_{H}) = 1_G \, .
\]
With this transversal, for all $x,y\in H^\lambda$ we have
\[
\iota(x)\iota(y)=\iota(xy) \gamma (x,y) \ \mbox{ for some } \ \gamma(x,y)\in A_0 \subseteq Z(G_0)\, .
\]
It follows that for the simple $G_0$-module $(L (\lambda),\rho)$
\[
{}^{\gamma(x,y)}L(\lambda)=L(\lambda) \ \mbox{ and } \ \rho(\gamma(x,y)): L(\lambda) \xrightarrow{\cong} {}^{\gamma(x,y)}L(\lambda)
\]
is a multiplication by a scalar. Twisting both sides by $\iota (xy)$, we get another scalar
\[
\phi_{x,y} \in \bK^*, \ \ 
\phi_{x,y} : {}^{\iota(x)\iota(y)}L(\lambda)\xrightarrow{\cong} {}^{\iota(xy)}L(\lambda) = {}^{\iota(x)\iota(y)}L(\lambda) \, .
\]
Now for each $x \in H^\lambda$ we choose $\theta_x : L(\lambda) \xrightarrow{\cong} \,^{\iota(x)}L(\lambda)$, an isomorphism of simple $G_0$-modules. 
Observe that 
\[ 
\theta_{xy}, \; \phi_{x,y} \circ \,^{\iota(x)}\theta_y \circ \theta_x :
L(\lambda) \xrightarrow{\cong} {}^{\iota(xy)}L(\lambda)
\]
are both isomorphisms of $G_0$-modules. Therefore, they differ by a scalar. Denote this scalar by $\alpha (x,y)\in \bK^\ast$.
\begin{lemma}
The following function is a 2-cocycle:
\[
\alpha: H^\lambda \times H^\lambda \rightarrow \bK^\ast, \ \ (x,y)\mapsto \alpha (x,y) \, .
\]
\end{lemma}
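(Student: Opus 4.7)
The plan is to verify the cocycle identity $\alpha(x,y)\,\alpha(xy,z) = \alpha(y,z)\,\alpha(x,yz)$ by computing the composite $L(\lambda) \to {}^{\iota(xyz)}L(\lambda)$ in two ways, corresponding to the two bracketings of $xyz$, and then applying Schur's lemma to the simple $G_0$-module $L(\lambda)$.

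First I would unpack $\gamma$. The associativity of $\iota(x)\iota(y)\iota(z) \in A$, combined with $\iota(u)\iota(v) = \iota(uv)\gamma(u,v)$ and centrality of $\gamma(u,v) \in Z(G_0)$, yields the standard $2$-cocycle identity for $\gamma \in Z^2(H^\lambda, Z(G_0))$ with respect to the natural conjugation action of $H^\lambda$ on $Z(G_0)$. Since $L(\lambda)$ is simple, by Schur's lemma every $z \in Z(G_0)$ acts on $L(\lambda)$ as a scalar $\chi(z) \in \bK^\ast$, and this central character $\chi$ is $H^\lambda$-invariant: for $h \in H^\lambda$, any $G_0$-isomorphism $L(\lambda) \cong {}^h L(\lambda)$ must intertwine the scalar actions of $Z(G_0)$, forcing $\chi({}^hz) = \chi(z)$. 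Applying $\chi$ to the cocycle relation for $\gamma$ shows that $\phi_{x,y} = \chi(\gamma(x,y))$ is a $2$-cocycle on $H^\lambda$ with values in $\bK^\ast$ with trivial action.

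Next I would expand $\theta_{xyz}$ via both bracketings using the defining relation $\theta_{uv} = \alpha(u,v)\,\phi_{u,v} \circ {}^{\iota(u)}\theta_v \circ \theta_u$. The $(xy)z$-bracketing gives a scalar $\alpha(xy,z)\,\alpha(x,y)\,\phi_{xy,z}\,\phi_{x,y}$ times the composition ${}^{\iota(xy)}\theta_z \circ {}^{\iota(x)}\theta_y \circ \theta_x$. The $x(yz)$-bracketing gives $\alpha(x,yz)\,\alpha(y,z)\,\phi_{x,yz}\,\phi_{y,z}$ times ${}^{\iota(x)\iota(y)}\theta_z \circ {}^{\iota(x)}\theta_y \circ \theta_x$ (the scalar $\phi_{y,z}$ is unchanged under twisting by $\iota(x)$, as scalars are invariant under the ${}^{(-)}$-construction). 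The key reconciliation is that ${}^{\iota(xy)}\theta_z$ and ${}^{\iota(x)\iota(y)}\theta_z$ coincide as set-theoretic maps of underlying vector spaces: the two twisted $G_0$-module structures on the target differ only by conjugation by the central element $\gamma(x,y)$, which acts by the scalar $\phi_{x,y}$, so the identification as $G_0$-module maps introduces exactly the scalar needed to match conventions.

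By Schur's lemma, any two $G_0$-isomorphisms $L(\lambda) \to {}^{\iota(xyz)}L(\lambda)$ differ by a unique element of $\bK^\ast$, so equating the two expressions for $\theta_{xyz}$ yields a scalar identity. The $\phi$-contributions cancel using the $2$-cocycle identity for $\phi$ established in the first step, leaving precisely $\alpha(xy,z)\,\alpha(x,y) = \alpha(y,z)\,\alpha(x,yz)$. The main obstacle is purely bookkeeping: keeping straight the three incarnations of the "same" twist of $\theta_z$ (living in ${}^{\iota(xy)}L(\lambda)$, ${}^{\iota(x)\iota(y)}L(\lambda)$ and ${}^{\iota(x)}({}^{\iota(y)}L(\lambda))$), handling the scalar shifts induced by $\gamma(x,y)$, and matching the cocycle conventions for $\gamma$, $\phi$ and $\alpha$. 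Once these identifications are made carefully, the cocycle identity drops out mechanically.
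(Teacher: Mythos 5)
Your argument is correct in substance; note that the paper itself supplies no proof of this lemma (all proofs in this subsection are deferred to \cite{AHR}), so the only benchmark is the standard computation, which is what you carry out. Both halves of your plan work: associativity of $\iota(x)\iota(y)\iota(z)$ gives $\gamma(xy,z)\cdot{}^{\iota(z)^{-1}}\gamma(x,y)=\gamma(x,yz)\,\gamma(y,z)$; the central character $\chi$ of $L(\lambda)$ satisfies $\chi({}^g z)=\chi(z)$ for $g\in G^\lambda$ because ${}^gL(\lambda)\cong L(\lambda)$ and a $G_0$-isomorphism intertwines the scalar actions of $Z(G_0)$; applying $\chi$ then yields the cocycle identity $\phi_{xy,z}\,\phi_{x,y}=\phi_{x,yz}\,\phi_{y,z}$ for $\phi_{x,y}=\chi(\gamma(x,y))$; and comparing the two bracketings of $\theta_{xyz}$ via Schur's lemma gives $\alpha(xy,z)\,\alpha(x,y)\,\phi_{xy,z}\,\phi_{x,y}=\alpha(x,yz)\,\alpha(y,z)\,\phi_{x,yz}\,\phi_{y,z}$, after which the $\phi$'s cancel. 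One sentence should be corrected, though it does not affect the outcome as you actually use it: the identification of ${}^{\iota(xy)}L(\lambda)$ with ${}^{\iota(x)\iota(y)}L(\lambda)$ introduces \emph{no} scalar. Since $\gamma(x,y)$ is central in $G_0$, conjugation by $\iota(x)\iota(y)=\iota(xy)\gamma(x,y)$ and by $\iota(xy)$ define the \emph{same} automorphism of $G_0$, so the two twisted module structures coincide on the nose and ${}^{\iota(xy)}\theta_z={}^{\iota(x)\iota(y)}\theta_z$ literally, exactly as you first assert. If one were to insert an extra factor of $\phi_{x,y}$ at that identification, as your phrase ``introduces exactly the scalar needed to match conventions'' suggests, the bookkeeping would no longer close: the only occurrences of $\phi$ in the computation are those coming from the defining relation for $\theta_{uv}$, and it is precisely those that cancel by the cocycle identity for $\phi$ established in your first step.
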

Let $\sA^\lambda$ be the twisted group algebra $\bK\ast_\alpha H^\lambda$. Its modules are projective representations of $H^\lambda$ with the cocycle $\alpha$. It has a basis
\[
\{ \rho_x \mid x\in H^\lambda \} \ \mbox{ with } \ \rho_x \rho_y = \alpha (x,y) \rho_{xy} \, .
\]

\begin{thm} \cite{AHR} \label{AHR_theorem}
The following statements hold.
\begin{enumerate}
    \item If $E\in\Irr (\sA^\lambda)$, then  $E \otimes L(\lambda)$ is a simple $G^\lambda$-module and 
    \[L(\lambda, E) \coloneqq \textup{Ind}_{G^\lambda}^G(E \otimes L(\lambda))\]
    is a simple $G$-module.
    \item The following function is a bijection:
    \[
    \coprod_{\lambda \in \mathbf{X}^+} \Irr (\sA^\lambda) \rightarrow \Irr (G), \quad
    (\lambda, E) \mapsto L(\lambda, E) \, .
    \]
    \item 
The restriction from $G$ to $G_0$ is given by the formula
\[\textup{Res}^G_{G_0} (L(\lambda,E)) \, \cong \, \dim(E)\Big( \bigoplus^r_{i=1} \ ^{g_i} L(\lambda)\Big) \]
where $g_1,\ldots, g_r$ are representatives in $G$ of the cosets in $G/G^\lambda$.    
\end{enumerate}
\end{thm}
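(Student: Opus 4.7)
The plan is to apply Clifford theory to the normal subgroup $G_0 \lhd G$ with finite quotient $H = G/G_0$, being careful to track the projective action on multiplicity spaces that produces the twisted group algebra $\sA^\lambda$.

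First, I would show that for any simple $G$-module $V$, the restriction $\textup{Res}^G_{G_0}V$ is semisimple and its isotypic components are permuted transitively by $G$. The semisimplicity is the usual Clifford argument: pick any simple $G_0$-submodule $W \subseteq V$, then $\sum_{g \in G} gW$ is $G$-stable, equals $V$ by simplicity, and is a sum of simple $G_0$-modules (each $gW$ is simple). Decomposing into $G_0$-isotypic components $V = \bigoplus_{\mu \in S} V_\mu$, the set $S \subseteq \mathbf{X}^+$ is a single $H$-orbit $H\cdot\lambda$, the stabiliser $G^\lambda$ preserves $V_\lambda$, and $V \cong \textup{Ind}_{G^\lambda}^G V_\lambda$ by the standard argument that the $G^\lambda$-subspace $V_\lambda$ generates $V$ freely over a system of coset representatives.

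Second, I would identify the $G^\lambda$-module structure on $V_\lambda$. As a $G_0$-module $V_\lambda \cong E \otimes L(\lambda)$ with multiplicity space $E \coloneqq \textup{Hom}_{G_0}(L(\lambda), V_\lambda)$. For $x \in H^\lambda$, the element $\iota(x) \in G^\lambda$ acts on $V_\lambda$; combining this with $\theta_x : L(\lambda) \xrightarrow{\cong} {}^{\iota(x)}L(\lambda)$ one obtains a unique $\rho_x \in \textup{End}(E)$ such that $\iota(x)\cdot(e \otimes v) = \rho_x(e) \otimes \theta_x(v)$. The key computation compares the two isomorphisms $\theta_{xy}$ and $\phi_{x,y}\circ \,^{\iota(x)}\theta_y \circ \theta_x$ from $L(\lambda)$ to ${}^{\iota(xy)}L(\lambda)$ and yields precisely $\rho_x\rho_y = \alpha(x,y)\rho_{xy}$. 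Hence $E$ acquires the structure of an $\sA^\lambda$-module, and because the $G_0$-action on the first factor of $E \otimes L(\lambda)$ is trivial while $L(\lambda)$ is absolutely simple, the lattice of $G^\lambda$-submodules of $V_\lambda$ coincides with the lattice of $\sA^\lambda$-submodules of $E$. This proves part (1) in the forward direction.

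Third, part (2) follows by reversing the construction: given a pair $(\lambda, E)$ with $E$ simple over $\sA^\lambda$, the module $E \otimes L(\lambda)$ is simple over $G^\lambda$ by the same lattice correspondence, and one checks that $L(\lambda, E) = \textup{Ind}_{G^\lambda}^G(E\otimes L(\lambda))$ is simple (any proper $G$-submodule would intersect the $\lambda$-isotypic component non-trivially in a proper $G^\lambda$-submodule, contradiction). The bijection is then tautological from the two constructions. Part (3) is then immediate from the definition of induction: using coset representatives $g_1, \dots, g_r$ of $G/G^\lambda$, one has $\textup{Res}^G_{G_0}\textup{Ind}^G_{G^\lambda}(E\otimes L(\lambda)) \cong \bigoplus_i \,^{g_i}(E\otimes L(\lambda)) \cong \dim(E) \bigoplus_i {}^{g_i}L(\lambda)$, since $G_0$ acts trivially on the $E$-factor and $^{g_i}L(\lambda)$ depends only on the coset.

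The main obstacle is the bookkeeping for the projective action: verifying the cocycle identity $\rho_x\rho_y = \alpha(x,y)\rho_{xy}$ rigorously and showing that different choices of the intertwiners $\theta_x$ (and of the transversal $\iota$) replace $\alpha$ by a cohomologous cocycle, giving isomorphic twisted group algebras and equivalent module categories. Once this is carefully set up, the rest of the argument is formal Clifford-Mackey theory, and one may legitimately cite \cite{AHR} rather than repeat the calculation in full.
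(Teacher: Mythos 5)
The paper does not actually prove this theorem: it is quoted from Achar--Hardesty--Riche, and the reader is explicitly referred to \cite{AHR} for the proofs. Your reconstruction supplies the standard Clifford--Mackey argument that underlies the cited result, and it is correct: semisimplicity of $\textup{Res}^G_{G_0}V$ and transitivity of $G$ on the isotypic components; the identification $V_\lambda\cong E\otimes L(\lambda)$ with the projective $H^\lambda$-action on the multiplicity space $E$ whose defect is exactly the cocycle $\alpha$ the paper defines by comparing $\theta_{xy}$ with $\phi_{x,y}\circ{}^{\iota(x)}\theta_y\circ\theta_x$; simplicity of the induced module because the summands ${}^{g_i}(E\otimes L(\lambda))$ have pairwise disjoint $G_0$-isotypic supports, so a nonzero $G$-submodule meets the $\lambda$-component in a nonzero $G^\lambda$-submodule of the simple module $E\otimes L(\lambda)$; and the Mackey formula for part (3), where $G_0$ normal and contained in $G^\lambda$ collapses the double cosets to $G/G^\lambda$. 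Your closing remarks on the dependence of $\alpha$ on the choices of $\iota$ and $\theta_x$ (only up to coboundary) are exactly the bookkeeping one must do. One small mismatch worth flagging: as printed, the coproduct in (2) runs over all $\lambda\in\mathbf{X}^+$, whereas the map you construct is a bijection only when indexed by $H$-orbits on $\mathbf{X}^+$ (a pair $(\lambda,E)$ and $(x\cdot\lambda,{}^{x}E)$ give isomorphic induced modules); this is an imprecision in the quoted statement rather than a gap in your argument.
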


Note that the direct sum of the conjugates of $L(\lambda)$ in the restriction formula can be written as $\oplus^r_{i=1}  L(\lambda_i)$ where $\lambda_i$ are all images of $\lambda$ under the action of various graph automorphisms of the Dynkin diagram.

Let us compare this data for the group $G$ with similar data for its torus $T$: we will keep the same letter, just adding a superscript ${}^{\diamond}$ for the torus data. First, $\pi_0 (T) =\pi_0 (G) = H$. Hence, the chosen transversal $\iota$ serves both $G$ and $T$ equally so that $\gamma^{\diamond} = \gamma$. Simple $T_0$-modules $L^{\diamond}(\lambda)$ are one dimensional and parameterised by all the weights $\lambda \in \mathbf{X}$. For a weight $\lambda$, let $\lambda^{\flat} \in \mathbf{X}^+$ be the unique dominant weight in the $W$-orbit of $\lambda$ (here, $W$ is the Weyl group of $G_0$). We identify $L^{\diamond}(\lambda)$ with the extremal weight subspace $L(\lambda^\flat)_{\lambda}$ of $L(\lambda)$. 
\begin{prop} \label{AHR_prop}
The following statements comparing the data for the $G$-module $L(\lambda^\flat)$ and the $T$-module $L^{\diamond}(\lambda)$ hold.
\begin{enumerate}
    \item $H^{\lambda^\flat}=H^{\lambda}$ and $T^{\lambda^\flat}=T^{\lambda}$.
    \item $\phi^{\diamond}_{x,y}=\phi_{x,y}$ for all $x,y\in H^\lambda$.
    \item $\theta^{\diamond}_{x}=\theta_{x}|_{L(\lambda^{\flat})_\lambda}$ for all $x \in H^\lambda$.
    \item $\alpha^{\diamond} ({x,y})=\alpha ({x,y})$ for all $x,y\in H^\lambda$.
    \item The following function is a bijection:
    \[
    \coprod_{\lambda \in \mathbf{X}} \Irr (\sA^{\lambda^\flat}) \rightarrow \Irr (T), \quad
    (\lambda, E) \mapsto L^{\diamond}(\lambda, E) \coloneqq \ind_{T^\lambda}^T (E\otimes L^{\diamond}(\lambda)) \, .
    \]
    \item We have the following bijection, where the Weyl group acts on $\lambda$ in $L^{\diamond}(\lambda,E)$:
    \[
    \Irr (T)/W \rightarrow \Irr (G), \quad  W \cdot L^{\diamond}(\lambda,E) \mapsto L(\lambda^\flat, E) \, .
    \]
    \end{enumerate}
\end{prop}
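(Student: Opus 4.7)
The plan is to reduce the statement to a bookkeeping exercise on the two explicit parameterizations furnished by Theorem~\ref{AHR_theorem} and part~(5) of the proposition:
\[ \Irr(G) \;\longleftrightarrow\; \coprod_{\mu\in\mathbf{X}^+}\Irr(\sA^\mu), \qquad \Irr(T) \;\longleftrightarrow\; \coprod_{\lambda\in\mathbf{X}}\Irr(\sA^{\lambda^\flat}). \]
Parts~(1)--(4) of the proposition guarantee that, for every $\lambda$, the algebra $\sA^{\lambda^\flat}$ appearing on the $T$-side is \emph{literally} the same as the one attached to $\mu=\lambda^\flat$ on the $G$-side (since the data $H^\lambda$, $\phi_{x,y}$, $\theta_x$ and $\alpha$ all match). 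Hence the assignment $(\lambda,E)\mapsto(\lambda^\flat,E)$ is well defined, and translating it through the parameterizations gives the proposed map $\Psi: L^\diamond(\lambda,E)\mapsto L(\lambda^\flat,E)$. On the parameter side $\Psi$ is visibly surjective with fibers $\{(w\mu,E) : w\in W/W_\mu\}$, so the task reduces to showing that these fibers coincide with the $W$-orbits on $\Irr(T)$.

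First I will fix the $W$-action. By Lemma~\ref{lemma normalisers}, $N_G(T)=S$; combining Lemma~\ref{lemm_subs}(6) with the observation that $T/T_0\cong H$ splits the exact sequence $1\to W\to\pi_0(S)\to H\to 1$, one obtains $S/T\cong W$. Thus conjugation by $S$ descends to a $W$-action on $\Irr(T)$. Unpacking this on the parameterization, for $w\in W$ with lift $\dot w\in N_{G_0}(T_0)$ the conjugation--induction identity should give
\[ \dot w\cdot L^\diamond(\lambda,E) \;\cong\; L^\diamond\bigl(w\lambda,\,{}^{\dot w}E\bigr), \]
where ${}^{\dot w}E$ denotes the transport of $E$ along the identification $\sA^{\lambda^\flat}=\sA^{(w\lambda)^\flat}$ induced by $\dot w$.

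The crucial step will be to verify ${}^{\dot w}E\cong E$ in $\Irr(\sA^{\lambda^\flat})$, so that the $W$-action on pairs is simply $(\lambda,E)\mapsto(w\lambda,E)$ and each $W$-orbit is exactly one fiber of $\Psi$. I expect this to follow because $\dot w\in G_0\subseteq G^{\lambda^\flat}$, making conjugation by $\dot w$ an inner automorphism of $G^{\lambda^\flat}$, which therefore acts trivially on isomorphism classes of simple $G^{\lambda^\flat}$-modules (in particular on $E\otimes L(\lambda^\flat)$). The compatibility of the cocycles and of the maps $\theta_x$ across $W$-conjugate weights, recorded in parts~(2)--(4), should transport this triviality down to the $\sA^{\lambda^\flat}$-level.

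The main obstacle is precisely this last identification: one has to chase through the data $\iota$, $\gamma$, $\theta_x$, $\alpha$ used to build $\sA^{\lambda^\flat}$ and confirm that the scalar ambiguity in choosing the lift $\dot w$ is absorbed consistently at both $\lambda$ and $w\lambda$ so that ${}^{\dot w}E$ is canonically $E$. Once this is in place, surjectivity of the induced map $\Irr(T)/W\to\Irr(G)$ is immediate from the two parameterizations, and injectivity is clear since distinct fibers of $\Psi$ correspond to distinct dominant weights or distinct isomorphism classes $E$.
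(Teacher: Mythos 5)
Your proposal does not prove the proposition as stated: it addresses only part~(6), taking parts~(1)--(5) as inputs (``Parts (1)--(4) of the proposition guarantee\dots'', ``part (5) of the proposition''). Those parts are not free: the paper derives (1) from the fact that a simple $G_0$-module is determined by any of its extremal weights (so $\stab_G(\lambda)=\stab_G(L(\lambda^\flat))=\stab_G(\lambda^\flat)$), derives (2)--(4) by identifying $L^{\diamond}(\lambda)$ with the extremal weight space $L(\lambda^\flat)_\lambda$ and restricting the structure maps $\phi_{x,y}$, $\theta_x$, $\alpha$ to it, and gets (5) by applying Theorem~\ref{AHR_theorem} to $T$ together with the identification $\sA^{\diamond\lambda}=\sA^{\lambda^\flat}$. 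A complete proof must contain at least this much.

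For part~(6) itself, your argument hinges on the claim ${}^{\dot w}E\cong E$, which you explicitly flag as ``the main obstacle'' and then do not resolve (``I expect this to follow\dots'', ``should transport this triviality down''). The inner-automorphism argument you sketch lives on the $G$-side ($\dot w\in G_0\subseteq G^{\lambda^\flat}$), whereas the object to be compared is the $T^{\lambda}$-module ${}^{\dot w}(E\otimes L^{\diamond}(\lambda))$; parts~(2)--(4) compare $T$-data at $\lambda$ with $G$-data at $\lambda^\flat$, not $T$-data at $\lambda$ with $T$-data at $w\lambda$, so the needed cocycle comparison is genuinely absent. Note also that this difficulty is self-inflicted: in the proposition the $W$-action is \emph{defined} on the label $\lambda$ (``the Weyl group acts on $\lambda$ in $L^{\diamond}(\lambda,E)$''), i.e., $w\cdot L^{\diamond}(\lambda,E)=L^{\diamond}(w\lambda,E)$. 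With that reading, (6) is immediate from (1)--(5): the assignment $(\lambda,E)\mapsto(\lambda^\flat,E)$ is constant on $W$-orbits and $\lambda\mapsto\lambda^\flat$ induces a bijection $\mathbf{X}/W\to\mathbf{X}^+$, so the two parameterizations match up. Identifying this label action with conjugation by $N_G(T)/T\cong W$ is a stronger statement than the paper asserts, and your proposal does not establish it.
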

\begin{proof}
The first statement follows from the fact that a simple module $L(\lambda)$ is uniquely determined by any of its extremal weights. 

The next three statements follow from our identification of 
$L^{\diamond}(\lambda)$ with the extremal weight subspace $L(\lambda^\flat)_{\lambda}$ of $L(\lambda)$. 

The last two statements follow from the first four statements as well as an application of Theorem~\ref{AHR_theorem} to both $T$ and $G$.
\end{proof}
We finish this section with an interesting problem. 
\begin{prob}
Find the axioms that the function
\[
[\alpha (\lambda)] : \mathbf{X}^+
\rightarrow
\coprod_{S\leq H} H^2 (S, \bK^\ast), \quad \lambda \mapsto [\alpha] \in H^2(H^\lambda,\bK^\ast)
\]
satisfies. How does one compute it algorithmically? Which of the functions correspond to groups $G$?
\end{prob}

\subsection{Equivariant sheaves on finite \texorpdfstring{$G$}{G}-sets} \label{ch3s2}
The category of equivariant sheaves on finite sets becomes useful here, cf. \cite{JMR, With} in the context of finite groups.
Let us discuss it here in the context of disconnected reductive groups. 

\begin{defn}
The objects of {\em the global representation category} $G\GRe$ are pairs $(X,V)$ where $X$ is a finite $G$-set and $V$. The morphisms from $(X,V)$ to $(Y,U)$ are pairs $(f,\phi)$ where $f:X\rightarrow Y$ is a $G$-equivariant map and $\phi: V \to f^{\ast}(U)$ is a morphism of $G$-equivariant sheaves on $X$.
\end{defn}

Notice that $G_0$ has to act trivially, so the $G$-sets are just $\pi_0 (G)$-sets. Observe that $\phi$ consists of $G$-equivariant linear maps $\phi_x : V_x \rightarrow U_{f(x)}$. This description gives a clear view of the composition of morphisms
\[
(f',\phi'_y) \circ (f,\phi_x) = (f' \circ f,  \phi'_{f(x)} \circ \phi_x) \, . 
\]

The category has a coproduct and a symmetric tensor product, given by
\[
(X,V) \oplus (Y,U) \coloneqq (X\coprod Y, V \coprod U), \ \
(X,V) \otimes  (Y,U) \coloneqq (X \times Y, V \boxtimes U)
\]
where $(V \boxtimes U)_{(x,y)} =  V_x \otimes_{\bK} U$. 

This category is interesting even for a connected group $G$. Its objects are collections of $G$-modules $(V_x)_{x\in X}$, parameterised by finite sets. We leave it to an interested reader to work out morphisms, the coproduct and the tensor product.

\begin{prop}
  The global sections functor
  \[
  \Gamma : G\GRe \rightarrow G\Mod \, , \quad (X,V) \mapsto \Gamma(X,V) \coloneqq \bigoplus_{x\in X} V_x
  \]
  is symmetric monoidal.
It admits a right adjoint functor $\Gamma_{\ast}$.
\end{prop}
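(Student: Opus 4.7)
The plan has two essentially disjoint parts. The first is to verify the symmetric monoidal compatibility of $\Gamma$; the second is to guess the right adjoint and verify the adjunction directly from the definitions.

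For the monoidal structure, the key identification is the canonical isomorphism of $G$-modules
\[
\Gamma((X,V)\otimes(Y,U)) \;=\; \bigoplus_{(x,y)\in X\times Y} V_x\otimes_\bK U_y \;\cong\; \Bigl(\bigoplus_{x\in X} V_x\Bigr) \otimes_\bK \Bigl(\bigoplus_{y\in Y} U_y\Bigr) \;=\; \Gamma(X,V)\otimes \Gamma(Y,U),
\]
where the $G$-action on the right is diagonal, matching the $G$-action on the external tensor $V\boxtimes U$. For the unit, the monoidal unit of $G\GRe$ is $(\mathrm{pt},\bK)$ (a one-point $G$-set with the trivial line bundle), and $\Gamma(\mathrm{pt},\bK)=\bK$, which is the unit of $G\Mod$. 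The hexagon and pentagon axioms, as well as the symmetry, transport from the corresponding properties of $\otimes_\bK$ stalk-by-stalk; I would check naturality of the tensor-preservation isomorphism in both arguments, which amounts to verifying compatibility with a pair of morphisms $(f,\phi)$ and $(f',\phi')$ at each index $(x,y)\mapsto (f(x),f'(y))$.

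For the right adjoint, my candidate is
\[
\Gamma_{\ast}: G\Mod \longrightarrow G\GRe, \qquad \Gamma_{\ast}(W) \coloneqq (\mathrm{pt},W),
\]
where $\mathrm{pt}$ denotes a one-point (hence $G$-fixed) set, and the equivariant ``bundle" over it is just the $G$-module $W$. To verify the adjunction, I would unwind a morphism $(f,\phi):(X,V)\to(\mathrm{pt},W)$: the map $f$ is forced, and $\phi$ is a family $\phi_x:V_x\to W$ satisfying the equivariance relation $\phi_{gx}\circ g = g\circ \phi_x$ for all $g\in G$ and $x\in X$. This is precisely the data of a single $G$-equivariant linear map $\bigoplus_{x\in X} V_x \to W$, i.e., an element of $\Hom_G(\Gamma(X,V),W)$. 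The resulting bijection
\[
\Hom_{G\GRe}\bigl((X,V),\Gamma_{\ast}(W)\bigr) \;\xrightarrow{\;\cong\;}\; \Hom_{G\Mod}\bigl(\Gamma(X,V),W\bigr)
\]
is natural in $(X,V)$ and in $W$ by construction, which gives the adjunction.

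There is no genuine obstacle here; the only mild subtlety to flag is that the $G$-equivariance of the family $\{\phi_x\}$ combines two pieces of data, namely the stalkwise actions of stabilisers $\mathrm{Stab}_G(x)$ on $V_x$ and the permutation action of $G$ on the index set $X$. These precisely reassemble into the single $G$-action on $\bigoplus_x V_x$, so the verification is mechanical once this bookkeeping is made explicit.
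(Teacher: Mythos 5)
Your proposal is correct and follows essentially the same route as the paper: the monoidal structure comes from distributing $\otimes_\bK$ over the direct sum of stalks, and the right adjoint is the ``sheaf on a point'' functor $\Gamma_{\ast}(W)=(\mathrm{pt},W)$, with the adjunction verified by unwinding morphisms into $(\mathrm{pt},W)$. The only cosmetic difference is that the paper describes $\Gamma(X,V)$ via the orbit decomposition as a sum of induced modules $\ind_{G_{x(i)}}^G V_{x(i)}$, whereas you work directly with the sum over all points; both are fine, and your verification of the adjunction is actually more explicit than the paper's.
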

\begin{proof}
  Write the $G$-set $X$ as a union of orbits, choosing a representative $x(i)$ for each orbit. Each $V_{x(i)}$ is a module over the stabiliser
  $G_{x(i)}$. The global section is ``naturally'' a direct sum of the induced representations
  \[
  \Gamma (X,V) \quad = \quad \oplus_{i} \; \ind_{G_{x(i)}}^G  V_{x(i)} \, .
  \]
  The functoriality is clear. The symmetric monoidal structure easily comes from the standard twist $v \otimes u \mapsto u \otimes v$.

  The right adjoint is the functor  giving the sheaf on a point
  \[
  \Gamma_{\ast} (M) \coloneqq (\mbox{point}, M) \, .
  \]   The right adjoint is the functor  giving the sheaf on a point
\end{proof}

There is no left adjoint to $\Gamma$ but there is a functor in the opposite direction, extending the assignment
\[
\gamma: L(\lambda, E) \ \mapsto \ ( G/G_{\lambda} , \widetilde{L(\lambda) \otimes E}), \quad  
\widetilde{L(\lambda) \otimes E}
\coloneqq
G \underset{G_{\lambda}}{\times}  (L(\lambda) \otimes E) \, , 
\]
defined on simple modules. The functor is defined only on the full subcategory of semisimple modules
$\gamma: G\Mod_{ss} \rightarrow G\GRe$ by 
\[
V \mapsto \bigoplus_{L\in \Irr (G)} L \otimes \hom_G (L,V) \mapsto
\coprod_{L=L(\lambda, E)\in \Irr (G)}  ( G/G_{\lambda} , (\widetilde{L(\lambda) \otimes E}) \otimes \hom_G (L,V)) \, . 
\]

\begin{defn}\label{global_rep_ring_defn}
The Grothendieck group $K(G\GRe)$ is a ring: its addition comes from the coproduct, 
its multiplication comes from the tensor product.
{\em The global representation ring}  $\sR(G,G_0)$ is 
a ``gauged'' Grothendieck ring of $K(G\GRe)$, i.e., 
the quotient ring $K(G\GRe)/I$ where the ideal $I$ is generated by ``gauges''
for all $(X,V), (X,U) \in G\GRe$.
\begin{equation} \label{relation}
[(X,V)] + [(X,U)] - [(X, V\oplus U)]   \, .
\end{equation}
\end{defn}

Now we characterise the representation ring of $G$.

\begin{thm} \label{gl_rep_map}
  The global sections functor $\Gamma$ descends to a surjective ring homomorphism $\sR(G,G_0)\twoheadrightarrow \sR(G)$.
\end{thm}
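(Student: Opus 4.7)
The plan is to verify three things in turn: first that the global sections functor induces a ring homomorphism $K(G\GRe)\to \sR(G)$, second that this homomorphism kills the gauging ideal generated by the relations~\eqref{relation}, and third that the induced map on $\sR(G,G_0)$ is surjective.

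For the first step, additivity of $\Gamma$ on Grothendieck classes is immediate from $\Gamma(X\coprod Y, V\coprod U)=\Gamma(X,V)\oplus \Gamma(Y,U)$, while multiplicativity follows from the fact (already stated in the preceding proposition) that $\Gamma$ is symmetric monoidal, so that $\Gamma((X,V)\otimes (Y,U))\cong \Gamma(X,V)\otimes\Gamma(Y,U)$ as $G$-modules. Hence $\Gamma$ defines a ring homomorphism from $K(G\GRe)$ to $\sR(G)$.

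For the second step, I would simply observe that for any $(X,V)$ and $(X,U)$ with the same underlying $G$-set, the stalkwise description gives
\[
\Gamma(X, V\oplus U)=\bigoplus_{x\in X}(V_x\oplus U_x)\cong \bigoplus_{x\in X}V_x \;\oplus\; \bigoplus_{x\in X}U_x=\Gamma(X,V)\oplus\Gamma(X,U)
\]
as $G$-modules, so the generator~\eqref{relation} of the gauging ideal $I$ is sent to zero in $\sR(G)$. Therefore $\Gamma$ descends to a ring homomorphism $\bar{\Gamma}:\sR(G,G_0)\to \sR(G)$.

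For surjectivity, it suffices to hit every simple $G$-module. Applying the assignment $\gamma$ introduced just before Definition~\ref{global_rep_ring_defn} to a simple module $L(\lambda,E)\in \Irr(G)$, we obtain the pair
\[
\big(G/G^\lambda,\; \widetilde{L(\lambda)\otimes E}\big)\in G\GRe, \qquad \widetilde{L(\lambda)\otimes E}=G\underset{G^\lambda}{\times}(L(\lambda)\otimes E).
\]
Computing global sections orbit by orbit as in the previous proposition, one recognises
\[
\Gamma\big(G/G^\lambda,\; \widetilde{L(\lambda)\otimes E}\big)\cong \ind_{G^\lambda}^{G}(L(\lambda)\otimes E)=L(\lambda,E)
\]
by part (1) of Theorem~\ref{AHR_theorem}. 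Since the classes $[L(\lambda,E)]$ span $\sR(G)$ additively by part~(2) of the same theorem, this proves surjectivity of $\bar\Gamma$.

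The three steps are essentially formal; the only point where one must be a little careful is checking that the induction description of $\Gamma$ on an orbit $G/G^\lambda$ matches $\ind_{G^\lambda}^G$ rather than some twisted variant, and that the $G^\lambda$-module structure on the stalk over the base point is precisely $L(\lambda)\otimes E$. This bookkeeping is the part most likely to require care, but is otherwise routine.
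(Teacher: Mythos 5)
Your proposal is correct and follows essentially the same route as the paper's proof: descend via the symmetric monoidal structure, check the gauging relations~\eqref{relation} are killed stalkwise, and obtain surjectivity from the section $\gamma$ satisfying $\Gamma(\gamma(M))\cong M$. The only (harmless) difference is that you apply $\gamma$ only to simple modules and invoke that their classes span $\sR(G)$, whereas the paper states $\gamma$ is a right inverse on all semisimple modules.
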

\begin{proof}
Since $\Gamma$ is a symmetric monoidal functor, it descends to a ring homomorphism between Grothendieck rings 
\[
  K(\Gamma) : K(G\GRe) \rightarrow K(G\Mod) \, .  
  \]
Since  $K(\Gamma)$ vanishes on the relations~\eqref{relation}, it descends to a ring homomorphism $\sR(G,G_0)\twoheadrightarrow \sR(G)$. The functor $\gamma$ yields a right inverse to this map, since $\Gamma (\gamma (M)) \cong M$. Thus, this map is surjective.
\end{proof}  

\subsection{Classical description of the representation ring} \label{ch3s3}
In this short section, we prove the following theorem:
\begin{thm}\label{isom R(G) and R(T)^N}
The restriction to the torus $T$ defines a ring isomorphism \newline
$\varphi: \sR(G)\xrightarrow{\cong} \sR(T)^{N_G(T)}$.    
\end{thm}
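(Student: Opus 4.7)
The map $\varphi$ is a ring homomorphism because restriction of $G$-modules is symmetric monoidal. For any $n\in N_G(T)$ and $V\in G\Mod$, the $T$-modules ${}^n(\res^G_T V)$ and $\res^G_T V$ are canonically isomorphic, since the twist is conjugation by an element of $G$, which is inner in $G$; so $\varphi$ lands in $\sR(T)^{N_G(T)}$. For injectivity, Lemma~\ref{lemm_subs}(7) asserts that ${}^GT=\bigcup_{g\in G}gTg^{-1}$ is Zariski dense in $G$. A character $\chi_V$ is a regular function on $G$ and a class function, so its restriction to $T$ determines its values on ${}^GT$ and hence, by density, on all of $G$. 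Linear independence of characters of non-isomorphic simple $G$-modules then forces $\ker\varphi=0$.

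For surjectivity, the plan is to match natural $\bZ$-bases on both sides. By Proposition~\ref{AHR_prop}(5), $\sR(T)$ is free with basis $\Irr(T)$. By Lemma~\ref{lemma normalisers}, $N_G(T)=S$; combining $\pi_0(T)\cong H$ with the exact sequence $1\to W\to\pi_0(S)\to H\to1$ of Lemma~\ref{lemm_subs}(6) yields $|S/T|=|W|$. This finite quotient permutes $\Irr(T)$ via the $W$-action on weights from Proposition~\ref{AHR_prop}(6), and consequently $\sR(T)^{N_G(T)}$ is $\bZ$-free on the orbit sums of the basis $\Irr(T)$; by Proposition~\ref{AHR_prop}(6), these orbits are indexed by $\Irr(G)$. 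Denote the orbit sum corresponding to $L(\lambda^\flat,E)$ by $O(\lambda^\flat,E)$. Thus $\sR(G)$ and $\sR(T)^{N_G(T)}$ are free $\bZ$-modules of the same finite rank.

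To finish, I would compute $\varphi([L(\lambda^\flat,E)])$ by first applying Theorem~\ref{AHR_theorem}(3) to restrict to $G_0$, expanding in weights of $T_0$, and then repackaging back into the $L^\diamond$-basis via Proposition~\ref{AHR_prop}. The expected outcome is
\[
\varphi([L(\lambda^\flat,E)]) \;=\; O(\lambda^\flat,E) \;+\; \sum_{\mu^\flat\prec\lambda^\flat} c_{(\lambda,E),(\mu,F)}\,O(\mu^\flat,F),
\]
i.e., upper unitriangular with respect to any total order on $\mathbf{X}^+$ refining the dominance order. This forces the transition matrix to be invertible over $\bZ$, and combined with injectivity gives $\varphi$ as an isomorphism.

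The main obstacle is the triangularity step. One must verify that the coefficient of $O(\lambda^\flat,E)$ is exactly $1$, that no spurious contribution of $O(\lambda^\flat,E')$ with $E'\neq E$ arises, and that every remaining orbit has strictly smaller dominant component $\mu^\flat\prec\lambda^\flat$. This rests on the compatibility of $G$- and $T$-data encoded in Proposition~\ref{AHR_prop}(1)--(4), which guarantees that the $\sA^{\lambda^\flat}$-module $E$ is recovered unambiguously from the extremal weight spaces of $\res^G_T L(\lambda^\flat,E)$.
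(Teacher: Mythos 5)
Your proposal follows the paper's proof essentially step for step: well-definedness via $N_G(T)$-invariance of restrictions, injectivity from the Zariski density of ${}^GT$ and character vanishing, and surjectivity by showing $\varphi$ is unitriangular on the orbit-sum basis $X(\lambda,E)$ of $\sR(T)^{N_G(T)}$ (your $O(\lambda^\flat,E)$) with respect to the dominance order, followed by induction. The only slip is the claim that both sides are free of the same \emph{finite} rank (they have countably infinite rank), but this is harmless since the unitriangularity-plus-induction argument, which is exactly what the paper uses, does not need it.
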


\begin{proof}
The restriction functor $\res^G_T$ yields a well-defined ring homomorphism $\varphi: \sR(G) \rightarrow \sR(T)$. Note any $T$-module $\res^G_T (V)$ is $N_G(T)$-invariant, i.e., isomorphic to its own twists by any $x\in N_G(T)$. Thus, the image of $\varphi$ lies in $\sR(T)^{N_G(T)}$. 

To deduce that $\varphi$ is injective, 
recall 
that ${}^GT$ is dense in $G$ (see Lemma~\ref{lemm_subs}). If $[V]\in \ker \varphi$, then its character $\chi_V$ is identically zero on $T$. Instead of $T$, one can restrict to any conjugate ${}^gT$, $g\in G$, also getting zero. Thus, $\chi_V$ is identically zero on ${}^{G}T$ and, hence, on $G$. Hence, $[V]=0$.

To prove surjectivity of $\varphi$, we recall from Lemma \ref{lemma normalisers} that $N_G(T)=S$. By Proposition~\ref{AHR_prop}, the elements
\[
X(\lambda, E) \coloneqq \sum_i L^\diamond (\lambda_i , E), \quad \lambda\in  \mathbf{X}^+, E \in \Irr (\sA^\lambda)\, ,
\]
where $\lambda_i$ runs over the joint $\langle W,H \rangle$-orbit of $\lambda$ (the same as $\pi_0 (S)$-orbit), form a $\bZ$-basis of $\sR(T)^{N_G(T)}$. Now observe that 
\[
\varphi (L(\lambda, E)) =  X(\lambda, E) + \sum_{F\in \Irr (\sA^\mu), \ \mu \prec \lambda} N_{\mu, F} X(\mu, F)
\]
where $N_{\mu,F}\in\bZ$ and we use the partial order~\eqref{order_wt} on the dominant weights. The easy induction, using this order on weights, now proves the surjectivity of $\varphi$.
\end{proof}

Notice that if $G$ is connected, then the action of the normaliser $N_G(T)$ yields the action of the Weyl group $W=N_G(T)/T$ so that $\sR(T)^{N_G(T)}=\sR(T)^W$ and we have a well-known classical statement. In this light, it is tempting to call the image of the group homomorphism
\[ \psi: N_G (T) \rightarrow \out (T)
\]
the Weyl group of (disconnected) $G$ but we shall resist this temptation.
\begin{prob}
Clarify the structure of the image of $\psi$ and its relation to $\pi_0 (S)$, cf. Lemmas~\ref{lemm_subs} and \ref{lemma normalisers}.
\end{prob}

\subsection{Knutson Index} \label{ch3s4}
For this section, we add a restriction that the algebraically closed field $\bK$ has characteristic zero. Now we use the theory developed in the previous chapter to study the Knutson Index for reductive groups \cite{JMR, M}. 

\begin{defn} 
The Knutson Index of a $G$-module $M$ is the smallest positive integer $n$ such that there exists a virtual module $V$ satisfying 
$$ M \otimes V = n \cdot R(G) $$ 
where $R(G)$ is the regular module of $G$. We denote it by $\mathcal{K}(M)$. The Knutson Index of $G$ is the lowest common multiple of the Knutson Indices of all simple $G$-modules and is denoted by $\mathcal{K}(G)$. We say that $G$ is of Knutson type if $\mathcal{K}(G) = 1$. 
\end{defn}

Note that $ M \otimes R(G) = \dim (M) \cdot R(G) $, so that $\sK(M)$ divides $\dim (M)$. However, the dimensions of simple $G$-modules grow and it is not clear a priori that $\sK (G)$ is finite.

Let us first consider the case of a connected reductive group $N$. We can further assume that $N$ is semisimple: just replace $N$ by its semisimple part $N_{ss} = [N, N]$ and use the following result.

\begin{prop} \label{prop ss}
Let $N_{ss}$ be the semisimple part of $N$. Then $\mathcal{K}(N) = \mathcal{K}(N_{ss})$.
\end{prop}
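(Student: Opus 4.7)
My plan is to relate simple modules of $N$ to those of its semisimple part $N_{ss} = [N, N]$ via the decomposition $N = N_{ss} \cdot Z$, where $Z = Z(N)_0$ is the connected centre (a torus) and $F = N_{ss} \cap Z$ is finite, so that $N \cong (N_{ss} \times Z)/F$. Since $\mathcal{K}(N)$ and $\mathcal{K}(N_{ss})$ are both defined as least common multiples over simple modules, it suffices to prove the pointwise equality $\mathcal{K}(L) = \mathcal{K}(L|_{N_{ss}})$ for every simple $N$-module $L$.

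First I would check that restriction to $N_{ss}$ gives a surjection $\Irr(N) \twoheadrightarrow \Irr(N_{ss})$: the central torus $Z$ acts on any simple $L$ by a character $\chi_L: Z \to \bK^\ast$, so $L|_{N_{ss}}$ is simple; and any simple $N_{ss}$-module $L'$ with central character $\alpha_{L'}: F \to \bK^\ast$ lifts to $N$ by choosing any character of $Z$ restricting to $\alpha_{L'}$ on $F$. Hence the sets $\{\mathcal{K}(L|_{N_{ss}}) : L \in \Irr(N)\}$ and $\{\mathcal{K}(L') : L' \in \Irr(N_{ss})\}$ coincide.

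Two structural identities are then key. First, twisting $R(N)$ by any character of $Z$ yields $R(N)$ again, since such a twist merely permutes the $Z$-weight decomposition of $\bK[N]$ coming from Peter-Weyl. Second, choosing coset representatives $Z/F \to N$ trivialises $N$ as an $N_{ss}$-bundle over $Z/F$, so $R(N) \cong R(N_{ss}) \otimes \bK[Z/F]$ as $N_{ss}$-modules. With these in hand, given a relation $L|_{N_{ss}} \otimes U' = k \cdot R(N_{ss})$, I would lift it by setting $U = U' \otimes \chi_L^{-1} \otimes \bK[Z/F]$: compatibility of $\chi_L^{-1}$ with the central character of $U'$ on $F$ ensures $U$ descends from $N_{ss} \times Z$ to $N$, and a direct tensor calculation gives $L \otimes U = k \cdot R(N)$, yielding $\mathcal{K}(L) \leq \mathcal{K}(L|_{N_{ss}})$. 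The reverse inequality comes from restricting $L \otimes U = k \cdot R(N)$ to $N_{ss}$, decomposing both sides by $Z$-weights, and extracting a single weight component.

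The main obstacle will be making rigorous sense of the infinite-dimensional factor $\bK[Z/F]$ inside the completed representation ring in which the Knutson Index is defined, and tracking central-character compatibility on $F$ throughout. A cleaner formalism is to invoke the torus isomorphism $\sR(N) \cong \sR(T_N)^W$ from Theorem~\ref{isom R(G) and R(T)^N} together with its $N_{ss}$-analogue $\sR(N_{ss}) \cong \sR(T_{ss})^W$ (the Weyl groups agree since $Z$ is central), which pushes the whole comparison to character lattices where the contribution of $Z$ is $W$-fixed and factors out.
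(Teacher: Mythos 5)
Your strategy is the same as the paper's in essence (the central product decomposition $N\cong (N_{ss}\times Z)/F$ with $Z=Z(N)_0$, $F=N_{ss}\cap Z$, and a comparison of the two regular representations via central characters), but the key construction fails whenever $F$ is nontrivial, e.g.\ $N=GL_n$, $N_{ss}=SL_n$, $F=\mu_n$. A module over $N_{ss}\times Z$ descends to $N$ only if, on each simple constituent, the central character of the $N_{ss}$-factor and the character of the $Z$-factor agree on $F$. But your $U'$ cannot be $F$-homogeneous: the equation $L|_{N_{ss}}\otimes U' = k\,R(N_{ss})$ forces $U'$ to contain constituents realising \emph{every} character of $F$, since $R(N_{ss})$ does while $L|_{N_{ss}}$ has a single central character. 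Hence $U=U'\otimes\chi_L^{-1}\otimes \bK[Z/F]$ does not descend to $N$ --- the asserted ``compatibility'' does not hold --- and the forward inequality is not established. Symmetrically, in the reverse direction a single $Z$-weight component of $L\otimes U=k\,R(N)$ only produces $M_0\otimes U_\alpha\cong k\,R(N_{ss})_\beta$ for a single $F$-isotypic piece of the regular representation, not all of $k\,R(N_{ss})$.

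The repair is exactly the bookkeeping the paper carries out: decompose $R(N)\cong\bigoplus_{\alpha}R(N_{ss})_{\alpha|_F}\otimes\bK_\alpha$ over characters $\alpha$ of $Z$, so that $L\otimes U=k\,R(N)$ splits into the family of componentwise equations $M_0\otimes U_\alpha=k\,R(N_{ss})_{(\alpha_0\alpha)|_F}$, while $M_0\otimes V=k\,R(N_{ss})$ splits into $M_0\otimes V_\gamma=k\,R(N_{ss})_{\beta_0\gamma}$ over $\gamma\in\Irr(F)$; since every character of $F$ extends to the torus $Z$, the two systems of equations coincide up to relabelling, so they are solvable for the same set of $k$'s and the two Knutson Indices agree. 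Your concern about the infinite-dimensional factor $\bK[Z/F]$ is, by contrast, a non-issue: $R(G)$ is itself infinite-dimensional, and the virtual modules in the definition of the Knutson Index are formal (possibly infinite) integer combinations of simples, as in the construction of Lemma~\ref{lemma orbit}. The fallback via Theorem~\ref{isom R(G) and R(T)^N} is only a sketch and does not by itself dispose of the $F$-compatibility issue.
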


\begin{proof}
We start with the central product decomposition 
\[ N \cong Z(N)_0 \bullet_A N_{ss} \coloneqq \dfrac{Z(N)_0 \times N_{ss}}{ \{(x,x^{-1})\mid x\in A\} }, \quad
A = Z(N)_0 \cap N_{ss}\] 
where $Z(N)_0$ is the connected component of the centre. The irreducible representations of $N$ have the form 
\[
M \otimes \bK_\alpha, \ \mbox{ where } \ (M,\rho) \in \Irr (G), \ \alpha: Z(H)_0 \to \bK^\times, \ \rho\mid_A = \alpha\mid_A\]
Let $\varpi : \Irr (Z(N)_0) \rightarrow \Irr (A)$. The characters of $A$ yield the following decomposition of the regular representation 
\begin{equation} \label{decompo}
R(N) \cong \bigoplus_{\beta\in \Irr (A)} \Big( \bigoplus_{\alpha\in \varpi^{-1}(\beta)} R(N_{ss})_\beta \otimes \bK_\alpha \Big) 
\ \mbox{ or } \ 
R(N)_\alpha \cong R(N_{ss})_{\varpi(\alpha)} \otimes \bK_\alpha \, .
\end{equation}
It remains to observe that $\sK (M) = \sK (M\otimes \bK_\alpha)$. 
Indeed, let $\beta\in \Irr (A)$ be the character of $A$ afforded by $M$.  Then
\[
M \otimes V \cong k R(N_{ss}) \ \Longleftrightarrow \ \forall \gamma\in \Irr (A) \ M \otimes V_\gamma \cong k R(N_{ss})_{\beta\gamma}
\]
and the similar thing is true for $N$, just using $Z(N)_0$-homogeneous components:
\[
(M\otimes \bK_\alpha) \otimes V \cong k R(N) \ \Longleftrightarrow \ \forall \gamma\in \Irr (Z(N)_0) \ (M\otimes \bK_\alpha) \otimes V_\gamma \cong k R(N)_{\alpha\gamma} \, .
\] 
In light of~\eqref{decompo}, both require solving the same equations on virtual modules $V_\gamma$.
\end{proof}

Suppose $N$ is semisimple and connected. We have a partial order on the simple $N$-modules given by the following partial order on weights
\begin{equation} \label{order_wt}
 \lambda \succeq \mu \quad \text{if} \quad \lambda - \mu \quad \text{is a sum of positive roots}   
\end{equation}
Its significance is the tensor product decomposition
\begin{equation} \label{tensor_dir_sum}
L(\lambda) \otimes L(\mu) = L(\lambda+\mu) \oplus \bigoplus_{\nu \prec \lambda+\mu} N_\nu L(\nu).   
\end{equation}
Define $\lambda^T \coloneqq - w_0 (\lambda)$, where $w_0\in W$ is the longest element. Its significance is the duality:  $L(\lambda)^\ast \cong L(\lambda^T)$.
\begin{prop} \label{prop connected inverse}
Let $\lambda,\mu \in\mathbf{X}^+$ and $\nu=\mu  - \lambda^T$. Then 
\begin{equation} \label{tensor_dir_sum1}
L(\lambda) \otimes L(\mu) = 
\begin{cases}
  L(\nu) \oplus \bigoplus_{\xi \succ \nu} N_\xi L(\xi)   &  \ \mbox{ if } \ \nu\in \mathbf{X}^+\, , \\
  \bigoplus_{\xi \succ \nu} N_\xi L(\xi)   & \ \mbox{ if } \ \nu \not\in \mathbf{X}^+\, .
\end{cases}
\end{equation}
\end{prop}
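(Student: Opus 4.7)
My plan is to prove the proposition by a duality/adjunction argument that converts the unknown decomposition of $L(\lambda)\otimes L(\mu)$ into an instance of the already-known decomposition~\eqref{tensor_dir_sum}.

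The key observation is that, since all the $L(\xi)$ are self-dual up to the involution $\xi\mapsto\xi^T$, for any dominant $\xi$ we have a chain of natural isomorphisms
\[
\Hom_N\!\bigl(L(\lambda)\otimes L(\mu),L(\xi)\bigr)\,\cong\,\Hom_N\!\bigl(L(\mu),L(\lambda)^{\ast}\otimes L(\xi)\bigr)\,\cong\,\Hom_N\!\bigl(L(\mu),L(\lambda^T)\otimes L(\xi)\bigr),
\]
so that the multiplicities satisfy the ``reciprocity'' identity
\[
\bigl[L(\lambda)\otimes L(\mu):L(\xi)\bigr]\;=\;\bigl[L(\lambda^T)\otimes L(\xi):L(\mu)\bigr].
\]
Assuming this reciprocity, the entire proposition falls out of \eqref{tensor_dir_sum} applied to the right hand side with highest weight $\lambda^T+\xi$.

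Concretely, the plan has three steps. First, I would use reciprocity together with~\eqref{tensor_dir_sum} to show that whenever $L(\xi)$ is a constituent of $L(\lambda)\otimes L(\mu)$, the weight $\mu$ must appear among the highest weights on the right, giving $\mu\preceq\lambda^T+\xi$, i.e.\ $\xi\succeq\mu-\lambda^T=\nu$. This already produces the inequality controlling the support of the decomposition and, in the case $\nu\notin\mathbf{X}^+$, upgrades automatically to the strict inequality $\xi\succ\nu$ since every summand $L(\xi)$ has dominant $\xi$. Second, for the case $\nu\in\mathbf{X}^+$, I set $\xi=\nu$ on the right side; then $\lambda^T+\nu=\mu$ is exactly the highest weight of $L(\lambda^T)\otimes L(\nu)$, and~\eqref{tensor_dir_sum} tells us that this top component has multiplicity exactly~$1$. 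By reciprocity, $L(\nu)$ appears in $L(\lambda)\otimes L(\mu)$ with multiplicity exactly~$1$, yielding the distinguished $L(\nu)$-summand. Third, one combines the two bullet points into the case distinction stated in the proposition.

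The only step I would be slightly careful about is the computation of $\lambda^T+\nu=\mu$ and the observation that $\mu$ is genuinely the \emph{top} constituent in $L(\lambda^T)\otimes L(\nu)$ (so multiplicity one, not merely nonzero). The main obstacle, if any, is less mathematical than notational: making sure the partial order $\preceq$ interacts correctly with the duality $\lambda\mapsto\lambda^T=-w_0(\lambda)$, in particular that $\lambda+\lambda^T$ is a nonnegative sum of positive roots, which is what guarantees the strict ordering $\lambda+\mu\succ\nu$ whenever $\lambda\neq 0$ so that the $L(\lambda+\mu)$ term from~\eqref{tensor_dir_sum} really is among the $L(\xi)$ with $\xi\succ\nu$ rather than being the distinguished summand.
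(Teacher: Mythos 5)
Your proposal is correct and follows essentially the same route as the paper: the reciprocity $[L(\lambda)\otimes L(\mu):L(\xi)]=[L(\lambda^T)\otimes L(\xi):L(\mu)]$ obtained from the tensor-hom adjunction and the duality $L(\lambda)^{\ast}\cong L(\lambda^T)$, followed by an application of the known decomposition~\eqref{tensor_dir_sum} to read off both the support condition $\xi\succeq\nu$ and the multiplicity-one statement for $\xi=\nu$ when $\nu$ is dominant. Your closing remark that $\lambda+\mu-\nu=\lambda+\lambda^T=\lambda-w_0(\lambda)$ is a nonnegative sum of positive roots is a correct (if unnecessary for the argument) consistency check.
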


Note that if both $\mu  - \lambda^T$ and $\lambda-\mu^T$ are dominant, then $(\mu  - \lambda^T)^T = \mu^T - \lambda$ is dominant too. This forces $\mu^T = \lambda$ and $\nu=0$.

\begin{proof}
The multiplicity $[L:M]$ of a simple module $L$ in a simple module $M$ is equal to
$\dim (\hom_N(L,M)) = \dim (\hom_N(M,L))$. Since 
\begin{align*}
\hom_N(L({\xi}), L({\lambda}) \otimes L({\mu})) &
\cong 
\hom_N(L({\xi}), \hom_{\bK}(L({\lambda})^* , L({\mu}))) \cong \\
\cong & \hom_N(L({\xi}) \otimes L(\lambda^T), L(\mu))\, ,
\end{align*} 
we can conclude that for all $\xi \in\mathbf{X}^+$
\[
[L(\xi) : L({\lambda}) \otimes L({\mu})] \ = \ [L(\mu) : L({\xi}) \otimes L(\lambda^T) ]\, .
\]
If $\nu \in \mathbf{X}^+$, the desired $[L(\nu) : L({\lambda}) \otimes L({\mu})]=1$ is guaranteed. 

Now make no assumption on $\nu$. Suppose $L(\xi)$ appears in $L(\lambda) \otimes L(\mu)$. Then $L(\mu)$ appears in $L(\xi) \otimes L(\lambda)^*$. By~\eqref{tensor_dir_sum},
$$ \mu \preceq \lambda^T + \xi \implies \nu \preceq \xi \, ,$$
as claimed.
%
%
\end{proof}


%

We now want to study the Knutson Index for the general case, where $G$ is a disconnected reductive group and $N$ is its identity component. By Proposition~\ref{prop ss} we can assume that the identity component $N$ of $G$ is semisimple. \\

Now we construct a total order on the irreducible representation of $N$ that refines the partial order $\succeq$. First, introduce the height function 
\[
\sL: \mathbf{X}^+ \rightarrow \bN\, , \quad \lambda \mapsto \langle \lambda, \rho \rangle\, , 
\]
where $\langle \cdot, \cdot \rangle$ is the dot product in the basis of simple roots and $\rho$ is the half-sum of positive roots.

\begin{defn}
Let $\lambda,\mu\in \mathbf{X}^+$. We set $\lambda > \mu$ if $\sL(\lambda) > \mathcal{L}(\mu)$ or $\mathcal{L}(\lambda) = \mathcal{L}(\mu)$ and $\lambda$ comes after $\mu$ in the lexicographic order in the basis of fundamental weights, i.e., $\lambda = \sum_i l_i \varpi_i$, $\mu=\sum_i m_i \varpi_i$, for some $j \quad l_j > m_j$ and for all $i<j \quad l_i = m_i$.
\end{defn}

Note that this total order refines our previous partial order. If $\lambda \succ \mu$ then $\lambda - \mu$ is a sum of positive roots. Therefore, $\mathcal{L}(\lambda - \mu) > 0$ and, as the height function is linear, we conclude that $\mathcal{L}(\lambda) > \mathcal{L}(\mu)$.

Also, note that the total order preserves addition. Hence, it can be extended to all of $\mathbf{X}$ in the obvious way.

\begin{lemma} \label{lemma orbit}
Consider a finite dimensional $N$-module of the form 
\[ M= L(\lambda_1) \oplus \bigoplus_{i>1} N_i L(\lambda_i)\, , \] 
where $\lambda_1^T > \lambda_k^T$ for all $k>1$. Then $\mathcal{K}(M) = 1$.
\end{lemma}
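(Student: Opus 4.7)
The plan is to construct an explicit virtual $N$-module $V = \sum_{\mu \in \mathbf{X}^+} c_\mu L(\mu)$ with $c_\mu \in \bZ$ (possibly with infinitely many nonzero coefficients) such that $M \otimes V \cong R(N)$. Using Peter--Weyl to write $R(N) = \bigoplus_{\mu \in \mathbf{X}^+} \dim(L(\mu)) \cdot L(\mu)$, this identity amounts to equality of multiplicities of each $L(\nu)$, $\nu \in \mathbf{X}^+$, on both sides. Infinite formal sums are admissible here because $R(N)$ is itself infinite dimensional and rational $N$-modules form an abelian category closed under arbitrary direct sums of simples, so $V$ can be interpreted as a formal difference of two such (possibly infinite dimensional) rational modules.

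I would determine the $c_\mu$ by induction on $\mu \in \mathbf{X}^+$ in the increasing total order; this is a well-order on $\mathbf{X}^+$ because the height function takes values in a discrete subset of $\bR$ and is bounded below by $0$, with finitely many dominant weights of each height. At stage $\mu$, set $c_\mu \coloneqq 0$ if $\mu - \lambda_1^T \notin \mathbf{X}^+$; otherwise, let $\nu \coloneqq \mu - \lambda_1^T$ and choose $c_\mu$ so that the coefficient of $L(\nu)$ in $M \otimes V$ equals $\dim L(\nu)$. Under this scheme each of the defining equations (indexed by $\nu \in \mathbf{X}^+$) is processed exactly once, namely at stage $\mu = \nu + \lambda_1^T$.

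The key combinatorial input combines Proposition~\ref{prop connected inverse} with the fact that the total order refines $\succeq$. Every simple constituent $L(\xi)$ of $L(\lambda_i) \otimes L(\mu')$ satisfies $\xi \succeq \mu' - \lambda_i^T$, hence $\xi \geq \mu' - \lambda_i^T$ in the total order; equivalently, $L(\nu)$ appears in $L(\lambda_i) \otimes L(\mu')$ only when $\mu' \leq \nu + \lambda_i^T$. The hypothesis $\lambda_1^T > \lambda_k^T$ for $k > 1$ then forces the maximal index $\mu'$ appearing in the coefficient of $L(\nu)$ in $M \otimes V$ to be $\mu' = \nu + \lambda_1^T = \mu$, contributed solely by $i = 1$, with multiplicity $N_1 \cdot 1 = 1$ (the latter because $(\nu + \lambda_1^T) - \lambda_1^T = \nu \in \mathbf{X}^+$, so Proposition~\ref{prop connected inverse} gives $L(\nu)$ as the lowest constituent of $L(\lambda_1) \otimes L(\nu + \lambda_1^T)$ with multiplicity one). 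Consequently the equation at stage $\mu$ takes the triangular form
\[ c_\mu + \sum_{\mu' < \mu} a_{\mu,\mu'} c_{\mu'} = \dim L(\nu), \]
with integer coefficients $a_{\mu,\mu'}$ and only finitely many nonzero terms, so $c_\mu$ is uniquely determined from the previously computed values.

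I expect the main obstacle to be the bookkeeping: checking that for each $\nu$ the set of $\mu'$ contributing to the equation is finite (which follows from the two-sided partial-order bound $\nu - \lambda_i \preceq \mu' \preceq \nu + \lambda_i^T$) and lies strictly below $\mu = \nu + \lambda_1^T$ except for $\mu$ itself; and verifying that the convention $c_\mu = 0$ when $\mu - \lambda_1^T \notin \mathbf{X}^+$ is consistent, i.e., that no such $\mu$ is the leading index of any equation. Once these points are confirmed, the construction produces a virtual $V$ with $M \otimes V = R(N)$, yielding $\sK(M) = 1$.
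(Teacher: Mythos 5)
Your proposal is correct and follows essentially the same route as the paper's proof: both construct the virtual module $V=\sum_\mu c_\mu L(\mu)$ coefficient by coefficient along the total order, using Proposition~\ref{prop connected inverse} together with the hypothesis $\lambda_1^T>\lambda_k^T$ to see that the multiplicity equation for $L(\nu)$ is triangular with leading term $c_{\nu+\lambda_1^T}$ of coefficient one. Your write-up is somewhat more explicit about the well-ordering of $\mathbf{X}^+$, the finiteness of each equation, and the convention when $\mu-\lambda_1^T$ is not dominant, but the underlying argument is the same.
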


\begin{proof}
Let us write an $N$-module as $V = \oplus_{\mu} X_\mu L(\mu)$. 
We denote its $\nu$-truncation by $V_{\mu\leq\nu} \coloneqq \oplus_{\mu\leq\nu} X_\mu L(\mu)$. 
We will construct an $N$-module $V$ such that $M \otimes V \cong R(N)$. The idea of the construction is to fix the coefficient $X_\mu$ one at a time, making sure that $(M \otimes V)_{\leq \nu} \cong R(N)_{\leq \nu}$ for larger and larger $\nu$. 



Now we essentially do induction on $\nu$, changing $V$ every time. For $\nu=0$ we set $V=L(\lambda_1^T) \cong L(\lambda_1)^\ast$. Clearly, $(M \otimes V)_{\leq 0} \cong L(0) \cong R(N)_{\leq 0}$.

Suppose we have accumulated $V$ such that $(M \otimes V)_{< \nu} \cong R(N)_{ < \nu}$. Let $\mu \coloneqq \nu + \lambda_1^T$. We can change $X_\mu$ in such a way that $(M \otimes V)_{\leq \nu} \cong R(N)_{\leq \nu}$. With higher and higher $\nu$, we are using higher and higher $\mu$. Hence, the process will produce a well-defined $V$ with $M \otimes V \cong R(N)$ after countably many steps.

Why can we change $X_\mu$?
By Proposition~\ref{prop connected inverse}, 
\[
L({\lambda_1}) \otimes L(\mu) \ \cong \ L(\nu) \ \oplus \ \mbox{ ``bigger than } \nu \mbox{ terms''}
\]
so we need to examine $L({\lambda_k}) \otimes L(\mu)$ for $k>1$. Again by Proposition~\ref{prop connected inverse}, 
\[
L({\lambda_k}) \otimes L(\mu) \ \cong \ \bigoplus \ \mbox{ ``bigger or equal than } \mu-\lambda_k^T \mbox{ terms''}\, .
\]
Since $\lambda_1^T > \lambda_k^T$, we conclude that $\mu-\lambda_k^T< \mu-\lambda_1^T=\nu$ and $L({\lambda_k}) \otimes L(\mu)$ contributes only larger terms so that 
\[
M \otimes L(\mu) \ \cong \ L(\nu) \ \oplus \ \mbox{ ``bigger than } \nu \mbox{ terms''} \, .
\]
This allows us to change $X_\mu$ to achieve our objective.
%
\end{proof}

In particular, Lemma~\ref{lemma orbit} applies to simple $N$-modules:
\begin{cor}
Every connected reductive group $N$ is of Knutson type.
\end{cor}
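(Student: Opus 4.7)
The corollary follows almost immediately from the combination of Proposition~\ref{prop ss} and Lemma~\ref{lemma orbit}, so the plan is essentially a two-line reduction rather than a substantive argument. First, by Proposition~\ref{prop ss}, we have $\mathcal{K}(N) = \mathcal{K}(N_{ss})$, so without loss of generality we may assume $N$ is semisimple (and connected). By definition, $\mathcal{K}(N)$ is the least common multiple of $\mathcal{K}(L(\lambda))$ as $\lambda$ ranges over $\mathbf{X}^+$, so it suffices to show that $\mathcal{K}(L(\lambda)) = 1$ for each dominant weight $\lambda$.

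The idea is to apply Lemma~\ref{lemma orbit} to the degenerate case of a single summand. Set $M = L(\lambda)$, viewed as a module of the form appearing in the lemma with $\lambda_1 = \lambda$ and no further indices $i > 1$. The hypothesis $\lambda_1^T > \lambda_k^T$ for all $k > 1$ is then vacuous, so the lemma applies and yields $\mathcal{K}(L(\lambda)) = 1$. Taking the least common multiple over all $\lambda \in \mathbf{X}^+$ gives $\mathcal{K}(N_{ss}) = 1$, and hence $\mathcal{K}(N) = 1$ by the initial reduction.

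There is really no obstacle here; the entire content of the result has been packaged into Lemma~\ref{lemma orbit}, and the corollary just records the special case where the module $M$ is simple. The only thing worth double-checking is that the construction in the proof of Lemma~\ref{lemma orbit} remains valid when only the single summand $L(\lambda_1)$ is present: the inductive step fixing $X_\mu$ with $\mu = \nu + \lambda_1^T$ reduces to the computation $L(\lambda) \otimes L(\mu) \cong L(\nu) \oplus (\text{higher terms})$ from Proposition~\ref{prop connected inverse}, which needs no auxiliary terms to suppress, and therefore the construction produces a virtual $V$ with $L(\lambda) \otimes V \cong R(N)$.
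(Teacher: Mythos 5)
Your proof is correct and follows exactly the paper's route: the paper's own justification is the one-line remark that Lemma~\ref{lemma orbit} applies to simple $N$-modules (the degenerate single-summand case, with the hypothesis on $\lambda_k^T$ vacuous), combined with the reduction to the semisimple case via Proposition~\ref{prop ss}. Your extra check that the inductive construction in the lemma still works with only one summand is sound but not needed beyond what the paper already assumes.
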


We now consider the general case where $G$ is any reductive group and state the main theorem of this section.

\begin{thm} \label{Knutson_finite}
The Knutson Index of a reductive group $G$ is finite.
\end{thm}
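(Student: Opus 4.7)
The plan is to combine the Mackey description of simple $G$-modules from Theorem \ref{AHR_theorem} with the projection formula for the adjunction $\ind^G_N\dashv\res^G_N$, and to feed the output into Lemma \ref{lemma orbit}, in order to produce a uniform divisibility $\mathcal{K}(G)\mid|H|$ where $H=\pi_0(G)$.

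First I would establish that $\mathcal{K}_G(L)\mid\mathcal{K}_N(\res^G_N L)$ for every simple $G$-module $L$. Because $[G:N]=|H|$ is finite we have $R(G)\cong\ind_N^G R(N)$, and the projection formula $L\otimes\ind_N^G W\cong\ind_N^G(\res^G_N L\otimes W)$ holds for virtual $N$-modules $W$. Thus any identity $\res^G_N L\otimes W=n\cdot R(N)$ induces $L\otimes\ind_N^G W=n\cdot R(G)$, yielding the divisibility.

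Next I would bound $\mathcal{K}_N(\res^G_N L)$. Writing $L=L(\lambda,E)$, Theorem \ref{AHR_theorem}(3) and the remark thereafter give $\res^G_N L\cong\dim(E)\cdot\bigoplus_{i=1}^r L(\lambda_i)$, where $\lambda_1,\dots,\lambda_r$ are the distinct elements of the $H$-orbit of $\lambda$ under diagram automorphisms of $N$. Set $M\coloneqq\bigoplus_{i=1}^r L(\lambda_i)$. Because $>$ is a total order and the $\lambda_i$ are distinct, the finite set $\{\lambda_i^T\}$ admits a unique maximum; reindexing, $\lambda_1^T>\lambda_k^T$ for all $k>1$. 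Each $L(\lambda_i)$ occurs in $M$ with multiplicity $1$, so Lemma \ref{lemma orbit} delivers a virtual $N$-module $V$ with $M\otimes V=R(N)$. Tensoring then gives $\res^G_N L\otimes V=\dim(E)\cdot R(N)$, hence $\mathcal{K}_N(\res^G_N L)$ divides $\dim(E)$. Since $E$ is a simple module of the twisted group algebra $\bK\ast_\alpha H^\lambda$, Schur's theorem on dimensions of projective representations ensures $\dim(E)\mid|H^\lambda|\mid|H|$. Combining the two steps, $\mathcal{K}_G(L)\mid|H|$ for every simple $L$, and passing to the least common multiple gives $\mathcal{K}(G)\mid|H|$.

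The main technical obstacle is that Lemma \ref{lemma orbit} is proved for semisimple $N$, whereas here $N$ is only assumed connected reductive: the total order $>$ is built from $\rho$-heights, which are blind to the central-torus direction. I would handle this exactly as in Proposition \ref{prop ss}, by decomposing $N=Z(N)_0\bullet_A N_{ss}$ and splitting $M$ according to its $Z(N)_0$-character, so that within each central-character block the comparison reduces to weights of $N_{ss}$ while the multiplicity-one orbit structure of $M$ (and hence the conclusion $M\otimes V=R(N)$) is preserved. This extends Lemma \ref{lemma orbit} to the reductive generality needed and completes the argument.
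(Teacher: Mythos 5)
Your proposal is correct and follows essentially the same route as the paper: restrict $L(\lambda,E)$ to the identity component via Theorem \ref{AHR_theorem}(3), apply Lemma \ref{lemma orbit} to the multiplicity-free orbit sum $\bigoplus_i L(\lambda_i)$, and transport the resulting identity back to $G$ by the tensor identity (projection formula), concluding $\mathcal{K}(L(\lambda,E))\mid\dim(E)$. Your extra care about the semisimple-versus-reductive hypothesis in Lemma \ref{lemma orbit} and the explicit divisibility $\mathcal{K}(G)\mid|H|$ correspond to the paper's reduction via Proposition \ref{prop ss} and its Corollary \ref{cor div}, respectively.
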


\begin{proof}
Let $L(\lambda, E)$ be a simple $G$-module. By Theorem~\ref{AHR_theorem}, its restriction to $G_0$ is $\dim(E) \cdot M$ where $M=\oplus_i L(\lambda_i)$ where the summation goes over the orbit of $\lambda$ under the automorphisms of the Dynkin diagram. The module $M$ satisfies the condition of  Lemma~\ref{lemma orbit} so that 
$$ \mathcal{K}(\dim (E) \cdot M) = \dim(E) \cdot \mathcal{K}(M) = \dim(E) \, .$$
It follows that there exists a $G_0$-module $V$ such that $L(\lambda, E) \otimes V \cong \dim(E) \cdot R(G_0)$. Inducing this from $G_0$ to $G$ and using the tensor identity \cite[Proposition I.3.6]{J} gives 
$$ L(\lambda, E) \otimes \text{Ind}_{G_0}^G (V) = \dim(E) \cdot R(G) $$
It follows that $\mathcal{K}(L(\lambda, E))$ divides $\dim(E)$.
Since there are finitely many possible $E$, $\sK(G)$ is finite. 
\end{proof}

\begin{cor} \label{cor div}
The Knutson Index of $G$ divides $|H|$, where $H=\pi_0 (G)$.
\end{cor}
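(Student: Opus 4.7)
The plan is to refine the divisibility bound that already appears inside the proof of Theorem~\ref{Knutson_finite}. There we have established, for every simple $G$-module $L(\lambda,E)$, that $\sK(L(\lambda,E))$ divides $\dim(E)$, where $E$ is an irreducible module over the twisted group algebra $\sA^\lambda = \bK \ast_\alpha H^\lambda$. Since $\sK(G)$ is the least common multiple of the numbers $\sK(L(\lambda,E))$, it suffices to show that $\dim(E)$ divides $|H|$ for each such $E$, because then each $\sK(L(\lambda,E))$ divides $|H|$ and hence so does their lcm.

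First I would reduce to bounding $\dim(E)$ by $|H^\lambda|$. Since $H^\lambda \leq H$, Lagrange's theorem gives $|H^\lambda|\bigm||H|$, so this reduction is harmless. The remaining content is therefore the assertion that the dimension of any irreducible projective representation $E$ of the finite group $H^\lambda$ (with cocycle class $[\alpha] \in H^2(H^\lambda,\bK^*)$) divides $|H^\lambda|$. This is the classical Schur divisibility theorem for projective representations; it can be deduced from the ordinary integrality/divisibility theorem applied to a covering group.

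Concretely, I would argue as follows. Up to rescaling, the cocycle $\alpha$ takes values in a finite cyclic subgroup $C \leq \bK^*$ (this is the standard reduction to a Schur cover). Then $\sA^\lambda$ is a quotient of the group algebra of the central extension
\[
1 \longrightarrow C \longrightarrow \widetilde{H^\lambda} \longrightarrow H^\lambda \longrightarrow 1,
\]
and any irreducible $\sA^\lambda$-module $E$ lifts to an ordinary irreducible $\widetilde{H^\lambda}$-module on which $C$ acts by a faithful character. By the usual theorem that irreducible character degrees of a finite group divide the index of its centre, $\dim(E)$ divides $[\widetilde{H^\lambda}:C] = |H^\lambda|$. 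Combining everything: $\sK(L(\lambda,E))\mid \dim(E)\mid |H^\lambda|\mid |H|$, and the corollary follows.

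The one step to be careful about is the divisibility of projective character degrees; this is where all the genuinely nontrivial work happens, but it is a standard fact in representation theory of finite groups and can simply be cited. Everything else is mechanical bookkeeping on top of Theorem~\ref{Knutson_finite} and Theorem~\ref{AHR_theorem}.
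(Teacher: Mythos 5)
Your proposal is correct and follows essentially the same route as the paper: both reduce to the bound $\sK(L(\lambda,E))\mid\dim(E)$ from the proof of Theorem~\ref{Knutson_finite}, then use that degrees of simple $\sA^\lambda$-modules divide $|H^\lambda|$, which divides $|H|$. The only difference is that you spell out the Schur-cover justification for the projective degree divisibility, which the paper simply asserts.
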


\begin{proof}
From the proof of Theorem~\ref{Knutson_finite}, it is clear that $\mathcal{K}(G)$ divides the lowest common multiple of the dimensions of all simple $\sA^\lambda$-modules for all $\lambda \in \mathbf{X}^+$. The degrees of simple $\sA^\lambda$-modules divide the order of $H^\lambda$.
Since all $H^\lambda$ are subgroups of $H$, we conclude that $\mathcal{K}(G)$ divides $|H|$.
\end{proof}

\begin{lemma}
If $1 \to N \to G \to H \to 1$ is a short exact sequence of groups then $\mathcal{K}(H)$ divides $\mathcal{K}(G)$.
\end{lemma}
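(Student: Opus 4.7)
The plan is to pull back $H$-modules to $G$-modules along the quotient $q \colon G \twoheadrightarrow H$ and then push witnesses back down by applying $N$-invariants. The key structural input I need is the $H$-module identification
\[
R(G)^N \;\cong\; R(H),
\]
which I would establish directly from $\bK[G]^N = \bK[N\backslash G] = \bK[H]$: this subspace is $G$-stable by normality of $N$, the induced $G$-action kills $N$ and descends to left translation on $\bK[H]$. I also need that the inflation functor $q^{\ast}$ sends simple $H$-modules to simple $G$-modules, which is immediate since any $G$-stable subspace of $q^{\ast}M$ is automatically $H$-stable.

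With these in hand the core of the argument is short. Fix a simple $H$-module $M$ and set $n \coloneqq \mathcal{K}(G)$. By definition of $\mathcal{K}(G)$ applied to the simple $G$-module $q^{\ast}M$, there exists a virtual $G$-module $U$ with $q^{\ast}M \otimes U \cong n \cdot R(G)$. Applying $(-)^N$ and using that $N$ acts trivially on $q^{\ast}M$, so that $(q^{\ast}M \otimes U)^N = M \otimes U^N$, I obtain
\[
M \otimes U^N \;\cong\; n \cdot R(G)^N \;\cong\; n \cdot R(H).
\]
This exhibits the Knutson index of $M$ (computed as an $H$-module) as a divisor of $n = \mathcal{K}(G)$. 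Taking the lowest common multiple over all simple $H$-modules then yields $\mathcal{K}(H) \mid \mathcal{K}(G)$.

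The step requiring the most care is applying $(-)^N$ to the possibly infinite virtual module $U$, cf.\ the countable construction in the proof of Lemma~\ref{lemma orbit}. In characteristic zero with $N$ reductive (which holds in our setting, since $N$ contains the reductive identity component $G_0$), $N$-invariants is exact on finite-dimensional representations and commutes with direct sums, so it extends termwise to virtual modules; and because $R(G)$ splits as a direct sum of finite-dimensional $G$-isotypic blocks, each automatically $N$-stable, both the identification $R(G)^N \cong R(H)$ and the tensor product calculation above can be carried out block by block, so no convergence issue arises.
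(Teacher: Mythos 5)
Your proof is correct and follows essentially the same route as the paper: inflate a simple $H$-module to a simple $G$-module, use the defining relation for $\mathcal{K}(G)$, take $N$-invariants, and identify $R(G)^N\cong R(H)$. The extra care you take with the identification $\bK[G]^N\cong\bK[H]$ and with applying $(-)^N$ blockwise to the (infinite-dimensional) regular module is a welcome elaboration of details the paper leaves implicit, but it is not a different argument.
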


\begin{proof}
Suppose $\mathcal{K}(G) = n$. Let $M$ be a simple $H$-module. Let $\textup{Inf}(M)$ denote the inflation from $H$-Mod to $G$-Mod. Then $\textup{Inf}(M)$ is  a simple $G$-module, thus there exists a $G$-module $V$ such that $\textup{Inf}(M) \otimes V = n R(G)$. Taking $N$-invariants and viewing the result in terms of $H$-modules gives $M \otimes V^N = n R(H)$. Therefore, $\mathcal{K}(H) \mid \mathcal{K}(G)$. 
\end{proof}

\begin{cor}
Let ${G}$ be a reductive group. Then $\mathcal{K}(\pi_0(G))$ divides $\mathcal{K}({G})$.
\end{cor}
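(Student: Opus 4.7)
The plan is to deduce this corollary as a direct application of the preceding lemma to the short exact sequence
\[ 1 \to G_0 \to G \to \pi_0(G) \to 1 \, , \]
which exists by the very definition of the component group $\pi_0(G)$ of the reductive group $G$. Setting $N = G_0$ and $H = \pi_0(G)$ in the lemma, one would read off immediately that $\mathcal{K}(\pi_0(G)) \mid \mathcal{K}(G)$.

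The one verification I would perform is that the mechanism in the lemma's proof survives the passage from abstract groups to the algebraic/rational setting we are working in throughout this section. Concretely, I would check two points. First, if $M$ is a simple $\pi_0(G)$-module, then $\mathrm{Inf}(M)$ (where $G$ acts via the quotient $G \twoheadrightarrow \pi_0(G)$) is a rational simple $G$-module: rationality follows because $G \twoheadrightarrow \pi_0(G)$ is a morphism of algebraic groups, and simplicity follows because $G_0$ acts trivially so $G$-submodules coincide with $\pi_0(G)$-submodules. Second, the functor of $G_0$-invariants sends $R(G) = \bK[G]$ to $\bK[G]^{G_0} = \bK[G/G_0] = \bK[\pi_0(G)] = R(\pi_0(G))$; in characteristic zero, $G_0$ is linearly reductive, so taking $G_0$-invariants is exact and compatible with tensoring by modules on which $G_0$ acts trivially.

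Given these two points, the argument of the previous lemma transplants verbatim: writing $n = \mathcal{K}(G)$ and choosing a virtual $G$-module $V$ with $\mathrm{Inf}(M) \otimes V \cong n \cdot R(G)$, an application of the $G_0$-invariants functor yields $M \otimes V^{G_0} \cong n \cdot R(\pi_0(G))$ in the Grothendieck group of $\pi_0(G)$-modules, forcing $\mathcal{K}(\pi_0(G)) \mid n$.

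There is no real obstacle here; the content is a one-line deduction, and the only thing worth spelling out (if at all) is the justification that the abstract-group lemma applies intact to the rational-representation setting for reductive $G$ in characteristic zero. A referee-friendly write-up can therefore be extremely short: invoke the short exact sequence $1 \to G_0 \to G \to \pi_0(G) \to 1$ and cite the preceding lemma.
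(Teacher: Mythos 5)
Your proposal is correct and is exactly the intended argument: the paper states this corollary as an immediate consequence of the preceding lemma applied to the short exact sequence $1 \to G_0 \to G \to \pi_0(G) \to 1$, and offers no further proof. Your additional checks (rationality and simplicity of the inflation, and $\bK[G]^{G_0} \cong \bK[\pi_0(G)]$ via linear reductivity of $G_0$ in characteristic zero) are sensible sanity checks but not required by the paper's write-up.
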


\begin{ex}
Recall from Example \ref{ex sim} the groups with $G_0 = \text{Spin}_8$ and $H = S_3$. By the proof of Corollary \ref{cor div} the Knutson Index divides the lowest common of all simple representations of all subgroups of $S_3$. As these representations are one or two dimensional, the Knutson Index of $G$ divides $2$ and we conclude that $\mathcal{K}(G)$ is $1$ or $2$. For instance, if $G = \text{Spin}_8 \times S_3$ we know that $\mathcal{K}(G)=1$.
\end{ex}

\begin{ex}
Recall from Example \ref{ex ss} the groups with $G_0 = (SL_2)^3$ and $H = S_4$. By the proof of Corollary \ref{cor div} the Knutson Index divides the lowest common of all simple projective representations of all subgroups of $S_4$. As these representations are one, two, three and four dimensional, the Knutson Index of $G$ divides $12$ and we conclude that $\mathcal{K}(G)$ is $1, 2, 3, 4, 6$ or $12$. For instance, if $G = (SL_2)^3 \times S_4$ we know that $\mathcal{K}(G)=1$.
\end{ex}


\bibstyle{numeric}
\printbibliography

\end{document}